\documentclass[12pt]{amsart}
\usepackage[all,cmtip]{xy}
\usepackage{pb-diagram,pb-xy}
\usepackage{amsmath,amscd,amsthm,amsfonts, amssymb,amsxtra}
\usepackage{graphicx}
\usepackage{hyperref}
  \hypersetup{colorlinks=true,citecolor=blue}
 \usepackage[font=scriptsize]{caption}

\CDat

\SelectTips{cm}{}
\newtheorem{thm}{Theorem}[section]  
\newtheorem*{un-no-thm}{Theorem}
\newtheorem{cor}[thm]{Corollary}     
\newtheorem{lem}[thm]{Lemma}         
\newtheorem{prop}[thm]{Proposition} 
 
\newtheorem{conj}[thm]{Conjecture}

\newtheorem{bigthm}{Theorem}

\newtheorem{bigcor}[bigthm]{Corollary}

\theoremstyle{definition}
\newtheorem{defn}[thm]{Definition}   

\theoremstyle{definition}

\theoremstyle{definition}

\theoremstyle{remark}
\newtheorem{rem}[thm]{Remark}        

\newtheorem*{acks}{Acknowledgements}

\newtheorem{ex}[thm]{Example}



\DeclareMathOperator{\End}{end}
\DeclareMathOperator{\Top}{top}
\DeclareMathOperator{\phys}{phy}

\begin{document}
\title[Quantization of fluctuating currents]{Algebraic topology and 
the quantization of fluctuating currents}
\date{\today \\ \indent \footnotesize 2010 {\it Mathematics Subject Classification.} Primary: 82C31, 60J28, 55R80,  Secondary: 05C21, 55R40, 82C41.}

\author {Vladimir Y. Chernyak}
\address{Department of Chemistry, Wayne State University, Detroit, MI 48202}
\email{chernyak@chem.wayne.edu} 
\author{John R. Klein}
\address{Department of Mathematics, Wayne State University, Detroit, MI 48202}
\email{klein@math.wayne.edu}

\author{Nikolai A. Sinitsyn}
\address{Theoretical Division, Los Alamos National Laboratory, Los Alamos, NM 87545,
and New Mexico Consortium, Los Alamos, NM 87545}
\email{sinitsyn@lanl.gov}

\begin{abstract} We give a new approach to
 the study of statistical mechanical systems:  
algebraic topology is used to investigate
the statistical distributions of stochastic currents generated in graphs.
In the adiabatic and low temperature limits we will demonstrate that quantization
of current generation occurs.
\end{abstract}

\maketitle
\setlength{\parindent}{15pt}
\setlength{\parskip}{1pt plus 0pt minus 1pt}
\def\bdot{\bold .}
\def\Sp{\bold S\bold p}
\def\vo{\varOmega}
\def\smsh{\wedge}
\def\^{\wedge}
\def\flush{\flushpar}
\def\id{\text{\rm id}}
\def\dbslash{/\!\! /}
\def\codim{\text{\rm codim\,}}
\def\:{\colon}
\def\holim{\text{holim\,}}
\def\hocolim{\text{hocolim\,}}
\def\cal{\mathcal}
\def\Bbb{\mathbb}
\def\bold{\mathbf}
\def\simtwohead{\,\, \hbox{\raise1pt\hbox{$^\sim$} \kern-13pt $\twoheadrightarrow \, $}}
\def\codim{\text{\rm codim\,}}
\def\stableto{\mapstochar \!\!\to}
\let\Sec=\S
\def\Z{\mathbb Z}









\setcounter{tocdepth}{1}
\tableofcontents

\section{Introduction}
\label{sec:intro}
In statistical physics and chemistry, especially in the study of classical stochastic systems at the intermediate length scale, a {\it master
equation} governs the time evolution of states, in which transitions between states
are treated probabilistically.
 In its most compact form, the master equation 
 is $\dot{\mathbf p} = \tau_D H_\beta{\mathbf p}$, where ${\mathbf p(t)}$
 is a one parameter family of probability
 distributions on the state space, $\tau_D$ is a constant that
  represents total driving time and 
 $H_\beta$ is the master operator,
which depends both on time $t$ and a number $\beta$ representing inverse temperature.

We will be interested in varying the parameters $\tau_D$ and $\beta$.
When $\tau_D$ is made large,  the duration of time it takes to traverse the driving path
is large, and one refers to this process as {\it adiabatic} (or slow) driving.
The limiting case $\tau_D \to \infty$ is called  the {\it adiabatic driving limit}.  Similarly, one can consider
the effects of low temperature on the system;
the limiting case $\beta\to \infty$ is referred to as  the {\it low temperature limit}. 

Associated with the formal solution
of the master equation is an {\it average current vector} which represents the 
probability flux of a given initial distribution of states. In our first physics paper \cite{CKS1}, we argued that for generic periodic
driving protocols,
taking first  the adiabatic limit and subsequently the low temperature limit 
results in an average current vector  having {\it integer} components. 
This quantization phenomenon has been observed in a variety of applications, including 
electronic turnstiles, ratchets, molecular motors and heat pumps (cf.\ the 
bibliography of
\cite{CKS1}).
One of the
purposes of the current paper is to give this result a 
mathematically rigorous foundation. Our second aim 
is to explain how algebraic topology enters the picture in
an essential way.
\medskip

We now develop a mathematical formulation 
of our main results.
Consider a particle taking a continuous time random walk on a connected finite graph
$\Gamma$.
The particle starts at a vertex $i$, say, and at a random waiting time it jumps
to an adjacent vertex $j$ where it waits again and so forth. 
Aside from the choice of inverse temperature $\beta$, such a process is determined by choosing a collection 
of real parameters, one assigned to each vertex (well energies) and to each edge (barrier energies) of the graph. 
The space of these
parameters is denoted by $\cal M_{\Gamma}$; it has the structure of a real vector space
whose dimension $d$ is the number of vertices plus the number of edges of $\Gamma$.

Current generation occurs when the parameters are allowed to vary 
in a one parameter family.\footnote{This fits with the modeling of
physical and chemical processes: artificial machines at the mesoscopic scale depend on external parameters such as electric fields, temperature, pressure and chemical potentials
which typically vary in time.} 
 We consider such a family to be parametrized by an interval $[0,\tau_D]$, in which the number $\tau_D$ represents total driving time.  
If the value of the parameters at the endpoints coincide, one obtains a periodic driving protocol; it can be represented as a pair $(\tau_D,\gamma)$ 
in which $\gamma\: [0,1] \to M_{\Gamma}$ is a smooth loop (equivalently,
it is a smooth Moore loop).
For each periodic driving protocol $(\tau_D,\gamma)$
and each $\beta$ we can associate a class $Q_{\tau_D,\beta}(\gamma) \in H_1(\Gamma;\Bbb R)$ lying in the first homology of the graph
with real coefficients. The class is defined in terms of the formal solution
of the master
equation and is called the {\it average current}
generated by the triple $(\tau_D,\gamma,\beta)$. 
Physically, the average current 
is a measurement of the ``pumping'' by external forces acting on the system.

The assignment
$\gamma\mapsto Q_{\tau_D,\beta}(\gamma)$ describes a smooth map
\[
 Q_{\tau_D,\beta}\: L\cal M_{\Gamma} \to H_1(\Gamma;\Bbb R)\, ,
\]
where $L{\cal M}_{\Gamma}$ is the space of smooth unbased loops in $\cal M_{\Gamma}$ with the
Whitney $C^\infty$ topology. By taking the adiabatic limit $\tau_D \to \infty$, and using the Adiabatic Theorem (Corollary \ref{cor:adiabatic_theorem}), we obtain
a smooth map 
\[
Q_\beta\: L\cal M_{\Gamma} \to H_1(\Gamma;\Bbb R)
\]
which does not depend on the parameter $\tau_D$. 
We call the latter the {\it analytic current} map.

If we subsequently take the low temperature limit $\beta\to \infty$, it turns out that the resulting map is not everywhere defined. 

\begin{defn} A loop $\gamma\in L{\cal M}_{\Gamma}$ is said to be {\it intrinsically robust}
if there is an open neighborhood $U$ of $\gamma$ such that the low temperature limit
\[
Q:= \lim_{\beta\to \infty} Q_\beta
\] is well-defined and constant on $U$.
The subspace of $L{\cal M}_{\Gamma}$  consisting of the intrinsically robust loops is denoted
by $\check{L}{\cal M}_{\Gamma}$. \end{defn}

The main result of this paper is a quantization result for $Q$.

\begin{bigthm}[Pumping Quantization Theorem] \label{thm:strongPQT}
The image of the map
\[
Q \: \check L{\cal M}_{\Gamma}
\to H_1(\Gamma;\Bbb R)
\]
is contained in the integral lattice $H_1(\Gamma;\Bbb Z)\subset H_1(\Gamma;\Bbb R)$.
 \end{bigthm}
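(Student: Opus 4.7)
The plan is to identify, for each intrinsically robust loop $\gamma$, an explicit integer $1$-cycle in $\Gamma$ whose homology class equals $Q(\gamma)$. The three key ingredients are the Adiabatic Theorem (Corollary \ref{cor:adiabatic_theorem}), the Kirchhoff matrix-tree representation of the instantaneous stationary distribution of the master operator, and Laplace-type asymptotics as $\beta \to \infty$.

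First, I would use the Adiabatic Theorem to rewrite $Q_\beta(\gamma)$ as the integral along $\gamma$ of a smooth $C_1(\Gamma;\Bbb R)$-valued $1$-form $\omega_\beta$ on $\cal M_{\Gamma}$, determined at each point $x$ by the instantaneous stationary distribution $\mathbf p^*_\beta(x)$ of $H_\beta(x)$ together with its transition rates. By Kirchhoff's matrix-tree theorem, the coordinates of $\mathbf p^*_\beta(x)$ are ratios of sums of Arrhenius weights $e^{-\beta E(x)}$ indexed by spanning in-arborescences of $\Gamma$, where each exponent $E(x)$ is an affine function of the vertex and edge energies on $\cal M_{\Gamma}$.

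Next, I would carry out Laplace asymptotics as $\beta\to\infty$. For generic $x$ there is a unique spanning in-arborescence $T(x)$ with minimal total energy, and $\mathbf p^*_\beta(x)$ concentrates exponentially on its root. This produces a combinatorial stratification of $\cal M_{\Gamma}$ by open top-dimensional chambers labelled by the dominant arborescence, separated by codimension-$1$ walls on which exactly two minimal arborescences tie, and higher codimension strata where more degeneracies occur. Intrinsic robustness of $\gamma$ amounts (after a small perturbation within its robust neighborhood, which does not change $Q(\gamma)$ by local constancy) to the conditions that $\gamma$ avoids all codimension-$\geq 2$ strata and meets the codimension-$1$ walls in finitely many transverse points. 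Inside any single chamber, $\omega_\beta$ decays exponentially in $\beta$, so the limiting current $Q(\gamma)$ is concentrated entirely at the wall crossings.

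The heart of the argument, and the main obstacle, is the local analysis at a single wall crossing. Two minimal arborescences $T$ and $T'$ that tie on a codimension-$1$ wall must differ by the exchange of exactly one oriented edge $e$ of $\Gamma$. A saddle-point computation on the Kirchhoff expansion, combined with the explicit formula for the edgewise probability flux, should show that the contribution to $\lim_\beta \int_\gamma \omega_\beta$ from a single transverse crossing of such a wall is precisely $\pm e \in C_1(\Gamma;\Bbb Z)$, with sign determined by the side of the wall on which $\gamma$ enters. Summing the finitely many wall contributions along $\gamma$ produces an integer $1$-chain $c\in C_1(\Gamma;\Bbb Z)$; conservation of probability in the adiabatic stationary regime forces $\partial c = 0$, so $[c]\in H_1(\Gamma;\Bbb Z)$, and by construction $Q(\gamma)=[c]$, completing the proof. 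The technical difficulty is entirely in the wall-crossing calculation: controlling the exchange of exponentially small mass between the two competing ground states along the unique distinguishing edge, uniformly in the transverse parameter, and verifying that the sub-leading contributions from neighboring strata do not spoil the integer answer.
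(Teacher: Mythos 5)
Your overall strategy --- Adiabatic Theorem, Kirchhoff-type expansion, Laplace asymptotics as $\beta \to \infty$, and localization of the current to a finite set of ``crossing events'' where the dominant combinatorial data changes --- is indeed the same skeleton the paper uses, and the perturbation trick (move a robust loop slightly off the codimension-$\geq 2$ strata inside its robust neighborhood, where $Q$ is locally constant) is a legitimate way to reduce the strong statement to the generic case. However, there is a genuine gap at the step you yourself flagged as ``the heart of the argument,'' and it is not merely a technical difficulty but an incorrect claim.

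You assert that the contribution from a single transverse crossing of a codimension-$1$ wall is $\pm e$ for a single oriented edge $e$ --- the edge by which the two competing arborescences differ. This is wrong. In the reversible (detailed-balance) setting of this paper the instantaneous stationary distribution is simply the Boltzmann distribution $\rho^{\mathrm{B}}$ on vertices, depending on $E$ alone; the Kirchhoff spanning-tree sum enters through the current operator $A$, not through $\rho^{\mathrm{B}}$ (Remark \ref{sum-formula-for-A}, Eq.~\eqref{eq:expression_for_current}). Integrating $\mathbf{J} = \sum_{T,j} Q_i^{T,j}\,\varrho_T^{\mathrm{B}}\,\dot{\rho}_j^{\mathrm{B}}$ by parts along an arc on which $W$ is nondegenerate, the bulk terms vanish in the low-temperature limit (Lemma \ref{Boltzmann_lemma}) and what survives is a boundary term of the form $Q_{v_a}^{T_\sigma,\,v_b}$: the $1$-chain given by the \emph{unique path inside the dominant spanning tree $T_\sigma$} from the dominant vertex $v_a$ at one end of the arc to the dominant vertex $v_b$ at the other. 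There is no reason for $v_a$ and $v_b$ to be adjacent in $\Gamma$, so this $1$-chain is generically a path of several edges, not a single $\pm e$; already for a triangle graph with tree $\{a,b\}$ and a crossing from dominant vertex $1$ to dominant vertex $3$, the contribution is $a+b$. Moreover, the walls where the dominant tree changes (ties in $W$) contribute nothing at all when the minimizing vertex is unique, so the ``edge-exchange'' walls you single out are exactly the ones that do not carry current. If you try to carry out the saddle-point analysis with the $\pm e$ ansatz you will find the local fluxes do not close up correctly.

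The fix, which is what the paper does (Lemmas \ref{type-U}, \ref{lem:current-formula}, and \ref{lemma-limit-robust}), is to replace your chamber-and-wall partition with the open cover of $\check{\cal M}_\Gamma$ by the sets $Y_h$ (or $U_v$, $V_\sigma$ in the weak case), decompose the circle into arcs subordinate to this cover, and show that each arc contributes the tree-path $Q^{T_h,j}_{i(h)}(\rho_j^{(b)} - \rho_j^{(a)})$; the telescoping sum of these paths around the circle is the integer $1$-cycle, and it is $\partial$-closed for the same conservation reason you cite. Your appeal to $\partial c = 0$ and to the perturbation to a generic loop are both fine once the correct local contribution (a tree path, not a single edge) is in place. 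Secondary to this, you should be careful not to attribute the arborescence/Kirchhoff sum to the stationary distribution itself: detailed balance makes $\rho^{\mathrm{B}}$ elementary, and the matrix-tree content lives entirely in the flux operator $A$.
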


A version of this statement was observed earlier in our statistical mechanics papers \cite{CKS1}, \cite{CKS2}, and we will provide a rigorous proof below. A companion to the Pumping Quantization Theorem is the Representability Theorem, which gives a characterization of the
space of intrinsically robust loops:

\begin{bigthm}[Representability Theorem]\label{thm:representability}  There 
is a topological subspace 
$\check{\cal D} \subset {\cal M}_{\Gamma}$ such that
\[
\check L{\cal M}_{\Gamma} = L({\cal M}_{\Gamma}\setminus \check{\cal D})\, .
\]
Consequently, the space of intrinsically robust loops is a loop space.
\end{bigthm}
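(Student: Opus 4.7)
The plan is to define $\check{\cal D}$ as a piecewise-linear ``discriminant'' subset of ${\cal M}_\Gamma$ cut out by the degeneracy conditions controlling the low-temperature asymptotics of $H_\beta$, and then to verify the two inclusions comprising the equality $\check L{\cal M}_\Gamma = L({\cal M}_\Gamma \setminus \check{\cal D})$. Starting from the closed-form expression for $Q_\beta(\gamma)$ supplied by the Adiabatic Theorem, the $\beta\to\infty$ limit should be governed by an Arrhenius-type combinatorial optimization: the dominant contributions to the instantaneous invariant measure and the dominant transition rates minimize explicit linear functionals of the well and barrier energies parametrizing ${\cal M}_\Gamma$. I would then take $\check{\cal D}$ to be the set of points in ${\cal M}_\Gamma$ at which some such minimizer fails to be unique, realizing it as a finite union of affine hyperplanes (and their intersections) inside the real vector space ${\cal M}_\Gamma$, in particular closed with empty interior.

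For the inclusion $L({\cal M}_\Gamma \setminus \check{\cal D}) \subseteq \check L{\cal M}_\Gamma$, I would use compactness: if $\gamma$ has image in ${\cal M}_\Gamma \setminus \check{\cal D}$, then $\gamma([0,1])$ has positive distance to $\check{\cal D}$, so the set $U$ of loops avoiding a fixed open thickening of $\check{\cal D}$ is a $C^\infty$-open neighborhood of $\gamma$ in $L{\cal M}_\Gamma$. On each connected component of ${\cal M}_\Gamma \setminus \check{\cal D}$ the combinatorial minimizers are locally constant, so the integrand in the formula for $Q_\beta$ admits a pointwise limit as $\beta\to\infty$ with uniform control on $U$; passing this limit through the loop integral produces a well-defined $Q$ on $U$. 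After further shrinking $U$ so that each $\gamma' \in U$ traverses the same sequence of chambers of ${\cal M}_\Gamma \setminus \check{\cal D}$ as $\gamma$, the value of $Q$ is locally constant on $U$, giving intrinsic robustness.

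For the reverse inclusion, suppose $\gamma$ meets a point $x \in \check{\cal D}$. I would construct arbitrarily close perturbations $\gamma_+, \gamma_-$ of $\gamma$ whose images lie in distinct chambers of ${\cal M}_\Gamma \setminus \check{\cal D}$ adjacent at $x$, and show that $\lim_{\beta \to \infty} Q_\beta(\gamma_\pm)$ take different values, so $Q$ cannot be locally constant near $\gamma$. The main obstacle lies here: one must verify that every stratum included in $\check{\cal D}$ is \emph{current-sensitive}, i.e., crossing it actually alters the low-temperature limit of the current rather than merely rearranging the combinatorial optimization in a way that cancels in the final homology class. This forces $\check{\cal D}$ to be defined as the minimal such discriminant, which amounts to identifying precisely which degeneracies of the Arrhenius optimization do affect the homology class $Q$; the combinatorial case analysis needed to match the forward robustness with the reverse sensitivity is the principal technical content of the proof.
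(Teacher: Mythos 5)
Your overall strategy---define a discriminant $\check{\cal D}$, check the two inclusions, and anticipate a ``current-sensitivity'' refinement---is broadly in the spirit of the paper's argument, but the proposal contains a structural error about $\check{\cal D}$ and leaves the genuinely hard step unresolved.

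The most serious problem is your description of $\check{\cal D}$ as ``a finite union of affine hyperplanes (and their intersections),'' with the consequent chamber picture. The paper's first-pass discriminant $\breve{\cal D}$ is the set of $(E,W)$ where $E$ has more than one absolute minimum \emph{and} $W$ fails to be injective---a conjunction, not a disjunction---so it is codimension~$2$ in ${\cal M}_\Gamma$, not a hyperplane arrangement. (Equivalently, the ``good'' set $\breve{\cal M}_\Gamma = U\cup V$ is a union of two open sets, and its complement is $U^c\cap V^c$.) This matters for two reasons. First, it is exactly what makes the Discriminant Theorem hold and hence makes avoidance of $\check{\cal D}$ generic; a codimension-$1$ discriminant would be hit by every essential loop. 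Second, it invalidates the chamber picture you use in the forward inclusion: since $\check{\cal D}$ has codimension $\ge 2$, its complement ${\cal M}_\Gamma\setminus\check{\cal D}$ is \emph{connected}, so ``on each connected component the minimizers are locally constant'' is vacuous. The walls across which the Arrhenius minimizers change are codimension-$1$ and are \emph{not} removed; loops in $\check{\cal M}_\Gamma$ routinely cross them, and the well-definedness of $Q$ comes not from constancy on chambers but from an arc-by-arc analysis using the open cover $U\cup V\cup Y$ together with the integration-by-parts estimates of Lemmas \ref{type-U}, \ref{lem:current-formula}, and \ref{lemma-limit-robust}.

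The second gap you correctly diagnose but do not close: you observe that $\check{\cal D}$ must be the \emph{minimal} current-sensitive discriminant and that identifying which degeneracies actually change $Q$ is the technical core. In the paper this is not deferred; it is carried out by equipping $\breve{\cal D}^+$ with the regular CW structure indexed by height functions, defining a $(d-2)$-cell to be essential precisely when its linking circle has nontrivial topological current under ${\breve q}_*$, proving the combinatorial criterion of Proposition \ref{prop:inessential-cell} (essentiality is governed by whether $h_0^{-1}(1)$ lies in one component of the forest $F_h$), and then taking $\check{\cal D}$ to be the closure of the essential cells. Once that definition is in place, the reverse inclusion you sketch goes through exactly as in the paper's Section \ref{sec:representability}: if $\gamma$ touches $\check{\cal D}$ one splices in a small linking loop $\partial D$ around an essential cell with $Q(\partial D)\ne 0$, and additivity of $Q$ contradicts local constancy. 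Without the cell-by-cell identification of essentiality, however, your reverse inclusion is only a plan, and the forward inclusion rests on an incorrect picture of the complement. So the proposal names the right obstacle but does not surmount it, and it misstates the codimension and structure of the discriminant.
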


The subspace $\check{\cal D}$ is called the {\it discriminant},
and its complement $\check {\cal M}_{\Gamma} := {\cal M}_{\Gamma}\setminus \check{\cal D}$
is called the space of {\it robust parameters}. 

\begin{bigthm}[Discriminant Theorem] \label{thm:discriminant} The one point compactification
of the discriminant, i.e., $\check{\cal D}^+$, has the structure of a finite regular CW complex of dimension $\dim {\cal M}_{\Gamma} -2 =$
$d-2$. In particular, the inclusion $\check {\cal M}_{\Gamma}
\subset {\cal M}_{\Gamma}$ is open and dense.
\end{bigthm}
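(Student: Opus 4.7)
The plan is to first give an explicit polyhedral description of $\check{\cal D} \subset {\cal M}_\Gamma$, then read off the CW structure on $\check{\cal D}^+$, and finally deduce openness and density formally.

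For the description of $\check{\cal D}$, based on how the low temperature limit of $Q_\beta$ should be governed by tie-breaking among the activation energies $B_e - E_{i(e)}$, I expect $\check{\cal D}$ to be a finite union of closed convex polyhedral subsets of the vector space ${\cal M}_\Gamma \cong \mathbb R^d$. Each piece will be cut out by a system of at least two independent linear equalities among the well and barrier energies---the multi-tie configurations that produce genuine ambiguity in the limiting current---so each piece has dimension at most $d-2$. Because the Markov dynamics depends only on energy differences, the constituent pieces should be cones sharing a common apex after a single global translation, and the arrangement is invariant under radial dilation from that apex.

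This polyhedral arrangement structure equips $\check{\cal D}$ with a canonical regular CW decomposition whose open cells are the relative interiors of the intersections of the pieces (and of their proper faces); regularity holds because each closed cell is a convex polyhedron, hence attached by a homeomorphism. To compactify, I would intersect the arrangement with a large sphere $S^{d-1}$ centered at the common apex, obtaining a finite regular CW complex $K \subset S^{d-1}$ of dimension $d-3$; then $\check{\cal D}^+$ is identified with the unreduced suspension of $K$, with the two suspension vertices being the apex and the point at infinity. This exhibits $\check{\cal D}^+$ as a finite regular CW complex of dimension $d-2$. Openness and density are now formal: $\check{\cal D}^+$ is compact Hausdorff, hence closed in ${\cal M}_\Gamma^+$, so $\check{\cal D}$ is closed in ${\cal M}_\Gamma$ and $\check{\cal M}_\Gamma$ is open; density follows because a complex of dimension $d-2$ embedded in $\mathbb R^d$ has empty interior by invariance of domain.

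The hard part will be the first step---identifying $\check{\cal D}$ precisely and establishing the codimension bound. The naive candidate for a codimension-one component would be the locus where a single pair of activation energies coincides, and one has to show that such loci do \emph{not} obstruct local constancy of $Q$. This should follow from a careful analysis of the low-$\beta$ asymptotics of the analytic current map: a single isolated tie produces a well-defined common value agreeing with the limit on either side, so only configurations with two or more independent ties can genuinely prevent local constancy. Eliminating all the codimension-one strata and distilling the residue into the polyhedral description above is where the real work lies; once that is done, the CW-theoretic and point-set conclusions follow cleanly from the structure of polyhedral arrangements.
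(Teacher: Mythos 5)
Your proposal has the right general shape but rests on the wrong characterization of the discriminant, and that gap is not cosmetic.

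You conjecture that the relevant degeneracies are ties among the activation energies $W_\alpha - E_i$, and that $\check{\cal D}$ consists of multi-tie loci. The paper's description of the discriminant is different, and the difference matters. The set $\breve{\cal D}$ (the complement of the ``good'' parameters) is the locus where \emph{both} $E\colon\Gamma_0\to\Bbb R$ has more than one absolute minimum \emph{and} $W\colon\Gamma_1\to\Bbb R$ fails to be injective. These are two independent codimension-one conditions on the separate factors of ${\cal M}_\Gamma$, so $\breve{\cal D}$ is immediately of codimension $\ge 2$ --- the codimension bound does not come from ruling out single activation-energy ties. Conversely, a configuration where an activation-energy tie $W_\alpha - E_i = W_\beta - E_j$ holds but $E$ has a unique minimum \emph{or} $W$ is injective is not in the discriminant; your framework would place it there. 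More importantly, $\check{\cal D}$ is not $\breve{\cal D}$: it is the closure of the union of those $(d-2)$-cells of the regular CW structure on $\breve{\cal D}^+$ that are \emph{essential}, a condition detected via Alexander duality and the weak map $\breve q$ (Proposition~\ref{prop:inessential-cell} gives a combinatorial criterion in terms of forests $F_h$). Your proposal does not engage with essentiality, so even if the codimension estimate went through, the object you would produce is the larger set $\breve{\cal D}$ rather than $\check{\cal D}$.

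On the topological side your instincts are closer. The degeneracy conditions are indeed invariant under positive scaling (hence the pieces are cones from the origin) and under the translations $E\mapsto E+c_1$, $W\mapsto W+c_2$, and the one-point compactification being a regular CW complex is plausible from a polyhedral-arrangement viewpoint. However, the paper does not argue by intersecting with a sphere and suspending. It instead stratifies $\breve{\cal D}$ by \emph{height functions} $h=(h_0,h_1)$ recording which vertices are minima of $E$ and which edges share a $W$-value, proves each closed stratum $D(h)$ factors as $D(h_0)\times D(h_1)$, and identifies the one-point compactification of each factor directly with a disk via explicit piecewise-linear coordinates (e.g.\ $D(h_0)^+\cong D^m$ with $m=1+|\Gamma_0\setminus h_0^{-1}(1)|$). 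This gives the CW structure on $\breve{\cal D}^+$ together with the characteristic maps, and $\check{\cal D}^+$ is then shown to be a subcomplex. Your suspension approach could in principle be made to work, but you would still need the height-function combinatorics (or an equivalent) to identify the cells, and you would have to verify that $\check{\cal D}$ is a \emph{subcomplex} of this decomposition, which is where the essentiality analysis is unavoidable.

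In short: the polyhedral and compactification heuristics are reasonable, but you have not identified $\check{\cal D}$ (the activation-energy-tie description is not the right one, and the essentiality filter is missing entirely). Filling that gap requires the two-step definition via $\breve{\cal D}$ and essential cells, which is precisely the content of the section you would be proving.
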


\begin{rem} A CW complex is said to be regular if its characteristic maps
are embeddings. By \cite[p.\ 534]{Hatcher}, such spaces have the structure of polyhedra.
In particular, $\check{\cal D}^+$  is  a finite polyhedron of dimension $d-2$.
We will explicitly describe the characteristic maps of $\check{\cal D}^+$ in \S\ref{sec:discriminant_theorem}.
\end{rem}

\begin{bigcor} \label{cor:generic} The inclusion $\check{L}{\cal M}_{\Gamma} \subset L{\cal M}_{\Gamma}$
is generic. In particular, a smooth loop $\gamma\in  L{\cal M}_{\Gamma}$ 
can always be infinitesimally perturbed to an intrinsically robust smooth 
loop $\gamma_1 \in \check{L}{\cal M}_{\Gamma}$.
\end{bigcor}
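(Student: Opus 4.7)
The plan is to derive the corollary from the Representability Theorem and the Discriminant Theorem by a standard transversality and dimension-counting argument. By the Representability Theorem, $\check L{\cal M}_{\Gamma} = L(\check{\cal M}_{\Gamma})$, so it suffices to show that the subset of smooth loops whose image lies in the open set $\check{\cal M}_{\Gamma} = {\cal M}_{\Gamma}\setminus \check{\cal D}$ is both open and dense in $L{\cal M}_{\Gamma}$ with respect to the Whitney $C^\infty$ topology.

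Openness should be immediate: by the Discriminant Theorem, $\check{\cal M}_{\Gamma}$ is open in ${\cal M}_{\Gamma}$, and in the Whitney $C^\infty$ topology the collection of smooth maps $S^1 \to {\cal M}_{\Gamma}$ whose image lies in a prescribed open subset is itself open.

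For density I would exploit the Discriminant Theorem's assertion that $\check{\cal D}^+$ is a finite regular CW complex of dimension $d-2$, where $d = \dim {\cal M}_{\Gamma}$. Thus $\check{\cal D}$ decomposes as a finite disjoint union of open cells, each of dimension at most $d-2$. Fixing $\gamma \in L{\cal M}_{\Gamma}$, I would apply the smooth transversality theorem to each cell $C$: the loops transverse to $C$ form an open dense subset of $L{\cal M}_{\Gamma}$. Since $\dim S^1 + \dim C \leq 1 + (d-2) < d$, transversality to $C$ forces disjointness from $C$. Intersecting over the finite family of cells produces a loop $\gamma_1$, arbitrarily $C^\infty$-close to $\gamma$, whose image avoids $\check{\cal D}$ entirely and therefore lies in $\check L{\cal M}_{\Gamma}$.

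The main obstacle is ensuring that the cells of $\check{\cal D}^+$ are smooth submanifolds of ${\cal M}_{\Gamma}$; the Discriminant Theorem only asserts regularity of the CW structure, which is a topological condition, whereas smooth transversality requires smooth strata. This smoothness should be read off from the explicit characteristic maps to be described in \S\ref{sec:discriminant_theorem}. As a fallback one can appeal to the classical fact, via Sard's theorem applied to a smooth triangulation of $\check{\cal D}$, that a smooth map from a one-dimensional source into a smooth $d$-manifold can be $C^\infty$-perturbed to avoid any subset of topological dimension at most $d-2$. Either route delivers density, which together with openness gives the desired genericity and the infinitesimal perturbation statement.
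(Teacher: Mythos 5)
Your argument is correct, and it is the standard one the paper leaves implicit: the corollary is stated immediately after the Discriminant Theorem with no separate proof, so the intended reasoning is precisely the dimension count you give (codimension $\geq 2$ forces a generic $1$-dimensional loop to miss $\check{\cal D}$), combined with the Representability Theorem to identify $\check L{\cal M}_{\Gamma}$ with $L\check{\cal M}_{\Gamma}$.

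Your worry about smoothness of the strata is warranted in general for a regular CW complex, but it resolves easily here, and in the way you anticipated: in \S\ref{sec:discriminant_theorem} the open cells $C(h) = C_0(h)\times C_1(h)$ are defined by a system of linear equalities among the $E_i$ (for $C_0(h)$) and among the $W_\alpha$ (for $C_1(h)$), together with strict inequalities. Each $C(h)$ is therefore an open subset of an affine subspace of ${\cal M}_{\Gamma}$, hence an embedded smooth submanifold, and the smooth transversality theorem applies directly. The closure $D(h)$ adds further equalities, so the boundary cells are again affine slices of strictly lower dimension, and no separate argument is needed for them. Your Sard-plus-triangulation fallback would also work, but it is not needed given the explicit affine structure. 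One small point worth being explicit about: since you perturb $\gamma$ cell by cell and then intersect over the \emph{finite} family of cells, finiteness of the CW structure (which the Discriminant Theorem guarantees) is what lets you conclude that the intersection of the resulting open dense sets is still open and dense.
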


Another main result of this paper is to give an algebraic topological model for 
the map $Q$:

\begin{bigthm}[Realization Theorem] \label{thm:realization} There is a weak map
\[
\check q\: \check {\cal M}_{\Gamma} \to |\Gamma|\,
\]
such that the composite
\[
L\check {\cal M}_{\Gamma}  @>>>
H_1(\check {\cal M}_{\Gamma};\Bbb Z) @> {\check q}_* >>
H_1(\Gamma;\Bbb Z)
\]
coincides with $Q$, where $L\check {\cal M}_{\Gamma}  @>>>
H_1(\check {\cal M}_{\Gamma};\Bbb Z)$ is the map that sends a free loop
to its homology class.
\end{bigthm}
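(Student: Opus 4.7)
The plan is to construct $\check q$ by tracking the ``dominant'' vertex of the low-temperature stationary distribution as parameters vary, and then to match $Q$ with $\check q_*\circ[\,\cdot\,]$ by asymptotic analysis of the adiabatic current.

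\textbf{Construction of $\check q$.} By the Discriminant Theorem $\check{\cal D}$ has codimension $2$, so $\check{\cal M}_\Gamma$ contains every codimension-$1$ stratum of the natural stratification. On an open top stratum a Freidlin--Wentzell-type asymptotic analysis shows the stationary distribution of the master equation at parameter $\mu$ concentrates, as $\beta\to\infty$, on a single vertex $v(\mu)\in V(\Gamma)$ determined by a spanning-arborescence optimization over the well and barrier energies encoded in $\mu$. Set $\check q(\mu):=v(\mu)\in V(\Gamma)\subset |\Gamma|$. Across each codimension-$1$ wall the dominant vertex swaps from some $v$ to some $v'$, and a key lemma will assert that $v,v'$ are joined by the specific edge $e$ of $\Gamma$ that ``flips'' in the optimal arborescence; extend $\check q$ continuously across the wall by moving along $|e|$ using the normal coordinate to the wall. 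Higher-codimension strata of $\check{\cal M}_\Gamma$ (those not in $\check{\cal D}$) are treated inductively using the regular cell structure of $\check{\cal D}^+$ supplied by the Discriminant Theorem. The resulting $\check q$ is continuous, hence a weak map in any reasonable sense.

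\textbf{Matching $Q$.} Fix a smooth loop $\gamma\in L\check{\cal M}_\Gamma$ transverse to all walls. By the Adiabatic Theorem one writes $Q(\gamma)=\lim_{\beta\to\infty}\int_{S^1} J_\beta(\gamma(t),\dot\gamma(t))\,dt$ for the adiabatic current integrand $J_\beta$. Two asymptotic facts then combine: (i) on the interior of any chamber the low-temperature distribution is an instantaneous equilibrium, so $J_\beta\to 0$ uniformly on compact subsets; (ii) at each transverse wall-crossing, labeled by an edge $e$ with sign $\varepsilon=\pm 1$, the integrand $J_\beta$ develops a delta-like profile that integrates, in the limit, to $\varepsilon[e]\in H_1(\Gamma;\Z)$. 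Summing over crossings yields $Q(\gamma)=\sum_k\varepsilon_k[e_k]$, which by construction of $\check q$ is precisely the homology class of the continuous loop $\check q\circ\gamma$ in $|\Gamma|$, i.e.\ $\check q_*[\gamma]\in H_1(\Gamma;\Z)$.

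\textbf{Main obstacle.} The hardest step is (ii): showing that each wall-crossing contributes exactly a single $\pm[e]$ in the $\beta\to\infty$ limit of the adiabatic current integral, uniformly across all combinatorial wall types. This requires a quantitative analysis of the master operator's low-lying spectrum and relaxation modes near walls, extending the methods of \cite{CKS1,CKS2}, and uses in an essential way the enumeration of wall types provided by the cell decomposition of $\check{\cal D}^+$ from the Discriminant Theorem. A secondary technical point is verifying that the inductive interpolation of $\check q$ over higher-codimension strata in $\check{\cal M}_\Gamma\setminus(\text{chambers}\cup\text{walls})$ is compatible with this signed-crossing count; this is where regularity of the cell structure of $\check{\cal D}^+$ is needed to avoid combinatorial obstructions.
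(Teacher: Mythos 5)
Your overall geometric picture (the low-temperature distribution concentrates on a vertex, and the current is generated when this vertex changes along the loop) is the right intuition and is indeed what drives the paper's proof, but two of your key structural claims are false as stated, and they are not mere technical difficulties.

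First, you assert that across a codimension-one wall the dominant vertex swaps from $v$ to $v'$ where $v,v'$ are ``joined by the specific edge $e$ of $\Gamma$ that flips,'' and you then interpolate $\check q$ across the wall by moving along $|e|$. This is wrong: the walls inside $\check{\cal M}_\Gamma$ are the loci where two well energies $E_v=E_{v'}$ tie while $W$ remains nondegenerate, and $v,v'$ can be arbitrarily far apart in $\Gamma$. What connects them is not a single edge but the unique path $P_v^{T_\sigma,v'}$ in the preferred spanning tree $T_\sigma$ determined by $W$ at the wall. Moreover, the set of minima of $E$ can have more than two elements on parts of $\check{\cal M}_\Gamma$ (this is allowed as long as the corresponding cell of the discriminant is inessential), and there the correct target is not a vertex but the associated tree $T_{C(h)}$. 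The paper handles both of these issues by building $\check q$ as a weak map through the intermediate space $\check N\subset\check{\cal M}_\Gamma\times|\Gamma|$ whose fiber over $Y_h$ is a neighborhood of $|T_{C(h)}|$, not a single vertex or edge; this sidesteps having to produce a genuine continuous map and replaces your ad hoc interpolation at walls and your hand-waved induction over higher-codimension strata.

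Second, your claim that each transverse wall-crossing contributes exactly $\pm[e]\in H_1(\Gamma;\Bbb Z)$ cannot be correct: the contribution of a single crossing is the $1$-chain $Q^{T_\sigma,v'}_v$ associated with the tree path, which is not a cycle, so it does not live in $H_1$. Only the sum of all crossing contributions over the closed loop telescopes to a cycle (this is exactly Eq.~\eqref{limit-PQT-robust-3} in the paper, $\lim_{\beta\to\infty}Q_\beta(\gamma)=\sum_m\bigl(Q^{T_{h^m},j_m}_{i(h^m)}-Q^{T_{h^{m+1}},j_m}_{i(h^{m+1})}\bigr)$, obtained by integration by parts in Lemma~\ref{lemma-limit-robust} rather than by a delta-function limit). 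Finally, a smaller point: invoking a ``spanning-arborescence optimization'' to locate the dominant vertex is unnecessary here because detailed balance makes the instantaneous stationary distribution an explicit Boltzmann factor, so the dominant vertex is simply the unique minimizer of $E$; arborescence-weight formulas would be needed only if the process were non-reversible.

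To repair the proposal you would need to replace ``edge $e$'' by ``the tree path $P_v^{T_\sigma,v'}$'' throughout, drop the claim that individual crossings yield homology classes, and either construct $\check q$ honestly as a zigzag through $\check N$ (as the paper does) or give a genuine argument for continuously extending a vertex-valued map over the stratification of $\check{\cal M}_\Gamma$ including regions where $E$ has several tied minima. As it stands the proof does not go through.
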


\noindent (Here, $|\Gamma|$ is the geometric realization of $\Gamma$. Recall that a {\it weak map}
$X \to Y$ is a diagram $X \leftarrow X' \to Y$, in which $X'\to X$ is a weak homotopy equivalence.)

\begin{rem} As long as $\Gamma$ has a non-trivial cycle, the homomorphism
 ${\check q}_*\: H_1(\check {\cal M}_{\Gamma};\Bbb Z) \to 
H_1(\Gamma;\Bbb Z)$ is non-trivial (cf.\ Remark \ref{rem:non-triviality}).
In particular, the map $Q$ is non-trivial.
\end{rem}

\begin{rem} Observe that
Theorem \ref{thm:realization} implies Theorem \ref{thm:strongPQT}. 
However, our actual procedure is to verify
Theorem \ref{thm:strongPQT} first and thereafter use
the tools of that proof  to establish
 Theorem \ref{thm:realization}. 
\end{rem}

Our final result gives an interpretation of the homomorphism ${\check q}_*$ in terms of the first Chern class of a certain line bundle.  Its formulation requires some preparation. A {\it weak complex line bundle} over a space $X$ is a pair $(\xi,h)$ consisting of a 
weak homotopy equivalence $h\:X' @> {\sim} >> X$ and a complex line bundle $\xi$ over $X'$
(in terms of classifying spaces, this is the same thing as 
specifying a weak map  $X \to BU(1)$). When the weak equivalence $h$ is understood,
we sometimes drop it from the notation and simply refer to $\xi$ as a weak complex line bundle
over $X$. Since $h$ is a cohomology isomorphism, there is no loss in considering
the first Chern class of $\xi$ as lying in $H^{2}(X;\Bbb Z)$.

Now suppose that $X = Y \times Z$. Then slant product with $c_1(\xi)$
defines a homomorphism
$
c_1(\xi)/\: H_1(Z;\Bbb Z) \to H^1(Y;\Bbb Z)\, .
$
Let $S(\Gamma) = U(1)^n$ be the $n$-torus, where $n$ is the first Betti number of $\Gamma$.  


\begin{bigthm}[Chern Class Description]\label{thm:Chern_description} 
There exists a weak complex line bundle $\xi$ 
on the
cartesian product $S(\Gamma) \times \check {\cal M}_{\Gamma}$
such that
\[
H_1(\check{\cal M}_{\Gamma};\Bbb Z) @> c_1(\xi)/ >> 
 H^1(S(\Gamma);\Bbb Z) = H_1(\Gamma;\Bbb Z)
\]
coincides with ${\check q}_*$.
\end{bigthm}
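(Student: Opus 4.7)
The plan is to realize $\xi$ as the pullback of a tautological (Poincar\'e) line bundle on $S(\Gamma) \times S(\Gamma)$ along a map $\text{id} \times \tilde q$, where $\tilde q$ is an abelianized version of the weak map $\check q$ furnished by Theorem \ref{thm:realization}; once this setup is in place the theorem reduces to a formal slant-product calculation.

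First I would construct an auxiliary weak map $\tilde q \: \check{\cal M}_\Gamma \to S(\Gamma)$. Since $|\Gamma|$ is a $K(\pi,1)$ for a free group, there is a canonical map $|\Gamma| \to K(H_1(\Gamma;\Bbb Z),1) = S(\Gamma)$ induced by abelianization on $\pi_1$, and it is an isomorphism on $H_1$. Composing with $\check q$ yields a weak map $\tilde q \: \check{\cal M}_\Gamma \to S(\Gamma)$ whose induced homomorphism on $H_1$ agrees with $\check q_*$ under the identification $H_1(S(\Gamma);\Bbb Z) = H_1(\Gamma;\Bbb Z)$.

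Second I would exhibit the Poincar\'e line bundle. Choose a basis of $H_1(\Gamma;\Bbb Z) \cong \Bbb Z^n$ and let $\alpha_1,\ldots,\alpha_n \in H^1(S(\Gamma);\Bbb Z)$ be the dual basis. Consider the class
\[
e \,:=\, \sum_{i=1}^n \alpha_i \times \alpha_i \ \in\ H^2(S(\Gamma) \times S(\Gamma);\Bbb Z).
\]
Via the identification $H^2(-;\Bbb Z) = [-,BU(1)]$, the class $e$ corresponds to a complex line bundle $\xi_0$, which can also be constructed explicitly as the quotient of $\Bbb R^n \times \Bbb R^n \times \Bbb C$ by $\Bbb Z^n \times \Bbb Z^n$ acting through the standard characters. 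The computational input I need is that the slant product $e/\: H_1(S(\Gamma);\Bbb Z) \to H^1(S(\Gamma);\Bbb Z)$ equals the identity (under the dual-basis identification); this follows at once from $(\alpha_i \times \alpha_j)/d = \alpha_i \cdot \langle \alpha_j, d\rangle$.

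Third, I set $\xi := (\text{id}_{S(\Gamma)} \times \tilde q)^* \xi_0$, interpreted as a weak complex line bundle on $S(\Gamma) \times \check{\cal M}_\Gamma$: fixing a representing diagram $\check{\cal M}_\Gamma \xleftarrow{\sim} X' \to S(\Gamma)$ for $\tilde q$, the bundle $\xi$ lives on $S(\Gamma) \times X'$, and the product $\text{id}_{S(\Gamma)} \times (X' \to \check{\cal M}_\Gamma)$ is the required weak equivalence. By naturality of the first Chern class and of the slant product, for any $\delta \in H_1(\check{\cal M}_\Gamma;\Bbb Z)$,
\[
c_1(\xi)/\delta \,=\, (\text{id} \times \tilde q)^* e\,/\,\delta \,=\, e/\tilde q_*(\delta) \,=\, \tilde q_*(\delta) \,=\, \check q_*(\delta)
\]
in $H^1(S(\Gamma);\Bbb Z) = H_1(\Gamma;\Bbb Z)$, giving the theorem. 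The main obstacles are bookkeeping rather than conceptual: the most delicate point is making the pullback construction well-defined as a \emph{weak} complex line bundle, independent of the chosen model for $\check q$ up to the appropriate equivalence; producing $\xi_0$ and checking that $e$ is realized as the Chern class of an honest line bundle (not just a rational class) is standard but worth verifying explicitly, after which the slant-product calculation is just naturality.
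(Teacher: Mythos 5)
Your proposal is correct, but it takes a genuinely different route from the paper. You reduce the theorem to a formal consequence of the Realization Theorem: abelianize $\check q$ to a weak map $\tilde q\colon \check{\cal M}_\Gamma \to S(\Gamma)$, pull back a Poincar\'e line bundle $\xi_0$ on $S(\Gamma)\times S(\Gamma)$ along $\mathrm{id}\times\tilde q$, and invoke naturality of the slant product together with the standard computation $e/{-} = \mathrm{id}$ in the dual-basis identification. The paper instead constructs $\xi$ \emph{directly and combinatorially}: it takes the torus in the form $H^1(\Gamma;U(1)) = C^1(\Gamma;U(1))/G_\Gamma$, builds a $G_\Gamma$-equivariant line bundle over $C^1(\Gamma;U(1))\times\check{\cal M}_\Gamma$ by a clutching construction indexed by the extended height functions $h$ --- the local line over $Y_h$ being spanned by the vector $v(h,E,W,\lambda)=\sum_{j\in(T_h)_0}\bigl(e^{-\beta E_j}\prod_{\alpha\in P_i^{T_h,j}}\lambda_\alpha^{s(\alpha)}\bigr)j$ --- and descends to the quotient; the identification $c_1(\xi)/ = \check q_*$ is then proved by explicitly computing the holonomy around a loop $\gamma$ and matching it against the integer cycle $\sum_k Q_{i_k}^{T_k,i_{k+1}}$ coming from the proof of the Realization Theorem.

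What each approach buys: your argument makes transparent that the bare existence statement of Theorem E is formal once the weak map $\check q$ exists, and it isolates the slant-product mechanism cleanly. The paper's construction, on the other hand, produces a specific combinatorial model for $\xi$ whose fibers are spanned by Boltzmann-weighted tree vectors; this is the model the paper needs in the following section, where it conjectures (Conjecture \ref{conj:eta=xi}) that this $\xi$ is equivalent as a weak line bundle to the analytically-defined ground state bundle $\eta$ of the twisted master operator. Your pullback $\xi$ determines the same homomorphism $c_1(\xi)/$, so it proves Theorem E as stated, but it does not by itself supply the bridge to the twisted master operator that motivates the paper's construction. One minor point worth checking carefully if you adopt your route: your dual-basis identification $H^1(S(\Gamma);\Bbb Z)\cong H_1(\Gamma;\Bbb Z)$ must agree with the paper's preferred isomorphism of Lemma \ref{lem:preferred_iso} (which comes from identifying $H^1(C^1(\Gamma;U(1));\Bbb Z)$ with $C_1(\Gamma;\Bbb Z)$ via the coordinate characters $\pi_\alpha$); they do agree, but an explicit remark would tighten the argument.
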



\begin{acks}
We are grateful to Misha Chertkov and Mike Catanzaro for useful discussions and comments.
The first and second authors wish to acknowledge the Center for Nonlinear Studies as well as
the New Mexico Consortium for partially supporting this research. Work at the New Mexico Consortium  was funded by NSF grant NSF/ECCS-0925618. 
This material is based upon work supported by the National Science Foundation 
under Grant Nos.\ CHE-1111350, DMS-0803363 and DMS-1104355.
\end{acks}

\section{Preliminaries}
\label{sec:preliminary}

\subsection*{Graphs}
We fix a connected finite graph
\[
\Gamma = (\Gamma_0,\Gamma_1) \, ,
\] 
where $\Gamma_0$ is the set of vertices and $\Gamma_1$ is
the set of edges. Here we are allowing multiple edges
between vertices and also edges linking a vertex to itself (loop edges).  
The entire structure of  $\Gamma$ is then given by specifying a function
\[
d\: \Gamma_1 \to \Gamma_0^{(2)}
\] 
which assigns to an edge the set of vertices which it connects ($\Gamma_0^{(2)}$
denotes the two-fold symmetric product of the set of vertices). 
For convenience, 
we fix a total ordering for $\Gamma_0$. Then $d$ lifts to 
a map $(d_0,d_1)\: \Gamma_1\to \Gamma_0 \times \Gamma_0$ 
in the sense that $d(e) = \{d_0(e),d_1(e)\}$, with
$d_0(e) \le d_1(e)$, where $d_0(e) = d_1(e)$ if and only if $e$ is a loop edge.
The maps $d_i\: \Gamma_1 \to \Gamma_0$, for $i = 0,1$ are called face operators.

The {\it geometric realization} of $\Gamma$ is the one dimensional
CW complex $|\Gamma|$ given by the amalgamated union
\[
\Gamma_0 \cup (\Gamma_1 \times [0,1])
\]
in which we identify $(e,i) \in \Gamma_1 \times \{0,1\}$
with $d_i(e) \in \Gamma_0$ for $i =0,1$.

\subsection*{Populations and currents}
\label{subsec:Markov-chain}

\begin{defn}
The space of {\it population vectors} $C_0(\Gamma;\Bbb R)$ is the real vector space  with basis 
$\Gamma_0$ and the space of {\it current vectors} $C_1(\Gamma;\Bbb R)$ is the real vector space with basis 
$\Gamma_1$.  If $\bold p$ is a population vector and
$i\in \Gamma_0$, then $\bold p_i$ denotes
the $i$-th component of $\bold p$. Likewise, if $\bold J$ is a
current vector $\alpha\in \Gamma_1$ then $
\bold J_\alpha$ denotes the $\alpha$-th component of $\bold J$. 
\end{defn}

The boundary operator
\[
\partial \: C_1(\Gamma;\Bbb R) \to C_0(\Gamma;\Bbb R)
\]
is given on basis elements by $\partial(\alpha) = d_0(\alpha) - d_1(\alpha)$.
Then $C_*(\Gamma;\Bbb R)$ is the cellular chain complex of $\Gamma$ over the vector
space of real numbers. The spaces  $C_i(\Gamma;\Bbb R)$ are smooth
manifolds and $\partial$ is a smooth map which is  a cellular chain analog
 of the divergence operator.
If a current ${\mathbf J}$ lies in $H_{1}(\Gamma;\Bbb R) := \text{ker}(\partial)$, 
we say that it is {\it conserved.}

The subspace $\bar{C}_{0}(\Gamma;\Bbb R) \subset C_0(\Gamma;\Bbb R)$ consisting of
population vectors ${\mathbf p}$ such that
$\sum_{i \in \Gamma_0} \bold p_i = 1$ is called the space of {\it normalized} population vectors; these can be viewed as discrete probability density functions on the space of states
$\Gamma_0$.
The subspace $\tilde{C}_{0}(\Gamma;\Bbb R) \subset C_0(\Gamma;\Bbb R)$
of those $\mathbf p$ such that $\sum_i {\mathbf p}_i = 0$ is called the space of 
{\it zero population} vectors.

\section{Driving Protocols}
\label{sec:driving_protocols}
Stochastic processes for periodic driving are governed by the 
``master equation'' which is a certain linear first order differential equation
acting on time dependent families of population vectors (see \cite[Ch.\ V]{vanKampen}). 
Our master equation is a combinatorial analog of the Fokker-Planck equation in Langevin dynamics \cite[Chap.\ VIII]{vanKampen}.

\subsection*{The space of parameters}
\label{subsec:driving-loop-spaces}
The {\it space of parameters} for $\Gamma$ is  
the real vector space 
\[
\cal M_{\Gamma}
\] 
consisting of ordered pairs $(E,W)$ where
$E\: \Gamma_0 \to \Bbb R$ and $W\: \Gamma_1\to \Bbb R$ are real-valued functions. 
The function $E$ is known as the set of
{\it well energies} and $W$ is is known as the set of 
{\it barrier energies}. We sometimes write
 $(E_i,W_\alpha)$ 
 for the value of $(E,W)$ at $(i,\alpha)\in \Gamma_0\times \Gamma_1$.

\begin{rem}
Notice that $\cal M_{\Gamma}$ only depends on the number of vertices
and edges of $\Gamma$, but not on the incidences. 
The subspace of ``robust'' parameters,
which we introduce later, will depend in a crucial way 
on the incidence structure of the graph.
\end{rem}

\subsection*{Periodic Driving} 
A {\it  driving protocol} is a smooth path
\[
\gamma\: [0,\tau_D]  \to \cal M_{\Gamma} \, ,
\] 
where the real number $\tau_D > 0$ plays the role of driving time. 
When $\gamma(0) = \gamma(\tau_D)$, we can view $\gamma$ as 
a map $C_{\tau_D}\to \cal M_{\Gamma}$, where $C_{\tau_D}$ is the circle of
length $\tau_D$. If in addition the latter map is smooth,
we will say that $\gamma$ is {\it periodic}. When $\tau_D =1$, we 
say that $\gamma$ is {\it normalized}. 

Observe that a periodic driving protocol equivalent to specifying a pair 
\[
(\tau_D,\gamma) \in \Bbb R_+ \times L\cal M_\Gamma
\]
in which $\gamma$ is a normalized periodic driving protocol.
 Here $L\cal M_{\Gamma}$ denotes the free (smooth) loop space
of $\cal M_{\Gamma}$.

\subsection*{The master operator}
Fix a real number $\beta > 0$.
For a given $(E,W) \in \cal M_{\Gamma}$ we can form, for each 
$i \in \Gamma_0$ and $\alpha \in \Gamma_1$, the real numbers
\begin{equation} \label{rates}
g_\alpha = e^{\beta W_\alpha}\, , \qquad \kappa_i = e^{\beta E_i} \, .
\end{equation}
Let $\hat g\: C_1(\Gamma;\Bbb R) \to C_1(\Gamma;\Bbb R)$ be the linear transformation
given by the diagonal matrix whose entries are $g_{\alpha}$.
Similarly, let $\hat \kappa\: C_0(\Gamma;\Bbb R) \to C_0(\Gamma;\Bbb R)$ be given by the diagonal
matrix with entries $\kappa_{i}$.

\begin{defn}[cf.\ {\cite[Eq.~(10)]{CKS1}}] For a given $(\beta, E,W)$, the {\it master operator} is 
defined to be
\begin{equation}
H = -\partial \hat g^{-1}\partial^* \hat \kappa \, ,
\end{equation}
where $\partial^*\: C_0(\Gamma;\Bbb R) \to C_1(\Gamma;\Bbb R)$ is the formal adjoint to
$\partial$.
\end{defn}
In particular, for fixed $\beta$, we can view the master operator as
defining a smooth map 
\begin{equation}
H\: \cal M_{\Gamma}  \to \End_{\Bbb R}(C_0(\Gamma;\Bbb R))\, .
\end{equation}

\begin{rem} \label{rem:master_operator_remarks}
With respect to the inner product on $C_0(\Gamma;\Bbb R)$
defined by $\langle \mathbf u,\mathbf v\rangle_{\hat \kappa} = 
\mathbf u \hat\kappa \mathbf v^t$, the master operator is self-adjoint. 
We infer that the eigenvalues of the master operator are real, and it is also 
easy to see that they 
are non-positive.
When $E = 0 = W$, the master operator is just the graph
Laplacian $-\partial \partial^*$.

The master operator is
also known as the {\it Fokker-Planck operator} to
emphasize its natural interpretation as the discrete analogue of the
Fokker-Planck operator in Langevin dynamics on smooth spaces.

\end{rem}

\begin{rem} For $i,j \in \Gamma_0$, let $S_{ij} = d^{-1}(\{i,j\})$ if $i \ne j$
and let
$T_i = \{\alpha \in \Gamma_1 |\, \{i\} \subsetneq d(\alpha)\}$. Setting $k_{i,\alpha} :=  
g_{\alpha}^{-1}\kappa_{i}$, the matrix entries of the master operator are 
\[
H_{ij} \quad =\quad \left\{ 
\begin{aligned}
   &\sum_{\alpha \in S_{ij} } k_{i,\alpha} \quad & i \ne j \, , \\
 -&\sum_{\alpha \in T_i}  k_{i,\alpha} \quad & i = j\, ,
\end{aligned}
\right.
\]
where the convention is that $H_{ij} = 0$ when $S_{ij}$ is empty, i.e., there
is no edge connecting $i$ and $j$.
In particular, $\sum_{j\in \Gamma_0} H_{ij} = 0$ and $H_{ij} >  0$ for $i \ne j$ (compare \cite[p.\ 101]{vanKampen}). 
\end{rem}

\begin{rem} We offer comments on some distinctions in  terminology between
mathematics and physics.
In the statistical mechanics literature, $\Gamma$ is usually
a simple graph (no multiple edges and no loop edges).
In this case the numbers  $H_{ij}$ are called {\it rates} and
describe a {\it Markov process} on $\Gamma$ with transition matrix $H$
(observe that $H_{ij} = k_{i,\alpha}$ with $d(\alpha) = \{i,j\}$ in this case).
If $X_t$ denotes the state of the process at time $t$, then  
\begin{equation}
H_{ij} = \lim_{\Delta t \to 0} \frac{P(X_{t+\Delta t} = j|X_t = i)}{\Delta t}\, ,  \qquad i \ne j\, ,
\end{equation}
where the numerator appearing on the right denotes the conditional probability of
transitioning to state $j$ at time $t+\Delta t$, given that one
is in state $i$ at time $t$. 

Because of Eq.~\eqref{rates}, the rates satisfy the {\it detailed balance} equation
\begin{equation} \label{detailed_balance}
 H_{ij}\kappa_j  =  H_{ji} \kappa_i\, .
\end{equation}
which states that the net flow of probability from state $i$ to state $j$ is the same
as that from state $j$ to state $i$.
This means that the Markov process is  time reversible \cite[p.\ 109]{vanKampen}.
Conversely, if the process is time reversible, one can show 
that the parameters $\kappa_i$ and $g_{\alpha}$ are, after possibly rescaling, in the form
given by  Eq.~\eqref{rates}.

What we have described above is the notion of continuous time random walk on a graph. This 
is slightly more general than the notion of random walk
considered in the mathematical literature
(cf.\ \cite[Chap. IX]{Bollobas}).  Mathematicians  usually 
define a random walk to 
 be a reversible {\it Markov chain} rather than the more general notion of reversible
 Markov process (the difference
being that for Markov processes, one considers waiting times at the vertices as part of the walk).
\end{rem}

\subsection*{The master equation}
Fix a periodic driving protocol $(\tau_D, \gamma)$, and 
$\beta >0$. Then we have the associated one parameter family of master operators $H(\gamma(t)) 
\in \End_{\Bbb R}(C_0(\Gamma;\Bbb R))$. 
The {\it master equation}  is given by
\begin{eqnarray}
\label{master-equation} \dot {\mathbf p}(t) = \tau_D H(\gamma(t)){\mathbf p}(t)\, . 
\end{eqnarray}
The master equation governs the time evolution of probability: when $\mathbf p(t)$ is
 normalized,  the component $\mathbf p_i(t)$ represents 
the probability density of observing the state $i$ at time $t$.


\subsection*{The Boltzmann distribution}
Suppose $V$ is a finite dimensional real vector space equipped with basis $\cal B$.
If $E\: \cal B\to \Bbb R$ is a function, and $\beta >0$ is a real number, we may form
the normalized linear combination
$$
Z^{-1}\sum_{j \in \cal B} e^{-\beta E_{j}}j \qquad Z\equiv \sum_{j\in \cal B}e^{-\beta E_{j}}\, .
$$
This is called the {\it (normalized) Boltzmann distribution} of the pair $(E,\beta)$.
(In thermodynamics, $\beta$ represents a multiple of
inverse temperature: $\beta = \frac{1}{k_BT}$, where $T$ is the temperature and
$k_B$ is the Boltzmann constant.) The basis ${\cal B}$ identifies $V$ with its
dual space $V^*$, so we are entitled to consider the function 
$E$ as a vector lying in $V$ having
components $E_i$.
Then for fixed  $\beta$, the Boltzmann 
distribution describes a smooth map
$$
{\mathbf \rho}^{\text{B}}\: V \to \Delta[V]\, ,
$$
where $\Delta[V] \subset V$ is the open standard simplex with respect to the
basis $T$ (i.e., this map sends a vector $E$ to its Boltzmann distribution).
We say $E$ is {\it non-degenerate} if there is a unique $j \in T$
such that the $j$-th component $E_j$ of $E$ is minimizing.

\begin{lem} \label{Boltzmann_lemma} Let $f\: [0,1] \to V$ be a
smooth map with the property that $f(t)$ is non-degenerate for every $t\in [0,1]$.
Then
\[
\frac{d}{dt}{\mathbf \rho}^{\text{\rm B}}(f(t))
\]
tends uniformly in $t$ to the zero vector in the low temperature limit
$\beta\to \infty$.
\end{lem}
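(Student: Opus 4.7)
The plan is to compute $\tfrac{d}{dt}\rho^{\text{B}}(f(t))$ explicitly using the chain rule and then bound each partial derivative uniformly in $t$ as $\beta \to \infty$. Writing $p_j(E) = Z^{-1}e^{-\beta E_j}$ with $Z = \sum_k e^{-\beta E_k}$, a direct calculation yields
\[
\frac{\partial p_j}{\partial E_k}(E) \;=\; \beta\, p_j(E)\bigl(p_k(E) - \delta_{jk}\bigr),
\]
so that
\[
\frac{d}{dt}\rho^{\text{B}}(f(t)) \;=\; \beta \sum_{j,k} p_j(f(t))\bigl(p_k(f(t)) - \delta_{jk}\bigr)\, f_k'(t)\, \mathbf{e}_j.
\]
The whole problem therefore reduces to controlling $\beta\, p_j(f(t))\bigl(p_k(f(t))-\delta_{jk}\bigr)$ uniformly in $t\in[0,1]$.

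Next I would exploit non-degeneracy. For fixed $t$, let $j^{*}(t)$ be the unique index minimizing $E_j:=f(t)_j$, and set
\[
\Delta(t) \;:=\; \min_{j \neq j^{*}(t)} \bigl(E_j(t) - E_{j^{*}(t)}(t)\bigr).
\]
Equivalently, if we list the components of $f(t)$ in increasing order as $E_{(1)}(t) \le E_{(2)}(t) \le \cdots$, then $\Delta(t) = E_{(2)}(t) - E_{(1)}(t)$. The sorted components depend continuously on $t$, so $\Delta \colon [0,1] \to (0,\infty)$ is continuous; by compactness it attains a positive minimum $\Delta_0 > 0$. A direct estimate then gives, for every $j\neq j^{*}(t)$,
\[
p_j(f(t)) \;\le\; e^{-\beta\, \Delta(t)} \;\le\; e^{-\beta\,\Delta_0},
\]
while $1-p_{j^{*}(t)}(f(t)) = \sum_{j\neq j^{*}(t)} p_j(f(t)) \le (\#\mathcal{B}-1)\,e^{-\beta\,\Delta_0}$. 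Consequently every factor $p_j(p_k-\delta_{jk})$ is bounded above in absolute value by a fixed constant times $e^{-\beta\,\Delta_0}$, so that $\beta\,|p_j(p_k-\delta_{jk})| \le C\,\beta\, e^{-\beta\,\Delta_0}$.

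Finally, since $f$ is smooth and $[0,1]$ is compact, $\sup_t\|f'(t)\| < \infty$, and the entire expression for $\tfrac{d}{dt}\rho^{\text{B}}(f(t))$ is bounded above in norm by $C'\,\beta\, e^{-\beta\,\Delta_0}$, which tends to zero as $\beta \to \infty$. The bound is independent of $t$, giving uniform convergence.

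I don't expect a serious obstacle here; the only mildly subtle point is verifying that the spectral gap $\Delta(t)$ is bounded uniformly away from zero. This is not automatic from pointwise non-degeneracy alone, but follows from continuity of the sorted components of $f(t)$ together with compactness of $[0,1]$. Everything else is a direct computation with the explicit form of $\rho^{\text{B}}$.
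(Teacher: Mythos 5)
Your proposal is correct and uses the same elementary calculus as the paper (chain rule applied to the Boltzmann weights, with exponential decay of the non-minimizing components), but it handles the uniformity assertion more carefully than the paper does. The paper simply declares ``As $[0,1]$ is compact, we only need to verify the statement pointwise,'' which on its face is not a valid reduction: pointwise convergence of a family of continuous functions on a compact interval does not by itself imply uniform convergence (absent monotonicity or equicontinuity). Your extra step supplies exactly what is needed to make this rigorous: you observe that the spectral gap
\[
\Delta(t) = E_{(2)}(t) - E_{(1)}(t)
\]
is a continuous, strictly positive function on $[0,1]$, hence bounded below by some $\Delta_0 > 0$, and this yields the $t$-independent estimate
\[
\Bigl\| \tfrac{d}{dt}\rho^{\text{B}}(f(t)) \Bigr\| \le C' \beta\, e^{-\beta \Delta_0} \to 0.
\]
In short, your approach is a cleaner version of the same argument: same derivative formula, same decay mechanism, but with the compactness applied to the gap function rather than used as a (questionable) reduction to pointwise estimates. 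One small presentational remark: the continuity of the sorted-coordinate functions $E_{(k)}(t)$ is perhaps worth a sentence of justification (it follows, e.g., from the expression of each $E_{(k)}$ as a min of maxes of the continuous $E_j$'s), but this is genuinely standard and not a gap.
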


\begin{proof} As $[0,1]$ is compact, we
only need to verify the statement pointwise, i.e., 
for each $t \in [0,1]$. To avoid clutter we write $E_i := E_i(f(t))$.
Then the $i$-th component of displayed derivative is
\begin{equation} \label{eq:derivative}
\dot{\mathbf \rho}^{\text{B}}_i =  \frac{\sum_j \beta(\dot{E}_j - \dot{E}_i)e^{\beta(E_i - E_j)}}{(\sum_j e^{\beta(E_i - E_j)})^{2}}\, .
\end{equation}

\noindent {\it Case (1): $i$ is the minimizing vertex.}
 In this instance, the denominator of Eq.~\eqref{eq:derivative} is the 
square of $1 + (\sum_{j\ne i} e^{\beta(E_i - E_j)})$, where each $E_i - E_j < 0$. 
Hence the denominator tends to $1$ in the low temperature limit.
 As for the numerator of Eq.~\eqref{eq:derivative}, 
when $i \ne j$, the term $\beta(\dot{E}_j - \dot{E}_i)e^{\beta(E_i - E_j)}$
tends to $0$ and when $i = j$ it is $0$.
So the low temperature limit of  Eq.~\eqref{eq:derivative} is $0$.
\medskip

\noindent {\it Case (2): $i$ isn't the minimizing vertex.} In this 
instance at least one of $E_i - E_j$ is positive and
Eq.~\eqref{eq:derivative} is dominated by $k\beta/e^{c\beta}$ for a suitable choice of 
constants $k$ and $c$ with $c >0$.
The latter tends to zero in the low temperature limit by L'Hospital's rule.
\end{proof}

\subsection*{The Boltzmann distribution for the population space}
 When $\cal B  = \Gamma_0$, we have $V = C_0(\Gamma;\Bbb R)$. 
The Boltzmann distribution in this case describes a smooth map
$$
\mathbf  \rho^{\text{\rm B}} \: \cal M_{\Gamma} \to \bar C_0(\Gamma;\Bbb R)
$$
whose value at $(E,W)$ depends only on
$E$ and $\beta$.
It is not difficult to show that $\mathbf  \rho^{\text{\rm B}}(E,W) \in C_0(\Gamma;\Bbb R)$
 is in the 
null space of the master operator $H(\beta,E,W)$ (compare \cite[p.~101]{vanKampen}).

\section{Current Generation} 
\label{sec:current_generation}
For a periodic driving protocol $(\tau_D,\gamma)$ 
and  $\beta > 0$, 
the {\it instantaneous current} at $t\in [0,1]$ is defined as
\[
\mathbf J(t) = \mathbf J(\beta,\tau_D,\gamma)(t) \,\, := \,\,  \tau_D \hat g^{-1}\partial^{\ast}\hat\kappa {\mathbf \rho}(t) \in C_1(\Gamma;\Bbb R)\, ,
\]
where $\rho(t)$
is the unique periodic solution of the master equation given by 
Proposition \ref{periodic_solution} below
(here we are assuming
that $\tau_D$ is sufficiently large).
Then the continuity equation 
\[
\partial \mathbf J = -\dot{\mathbf \rho}
\] 
is satisfied, 
in which $\mathbf J(t)$ plays the role of probability flux (\cite[p.\ 193]{vanKampen},
\cite{Horowitz-Jarzynski}).

The {\it average current generated per period}  
is 
\begin{eqnarray}
\label{Q-period}
Q(\beta,\tau_D,\gamma) \,\, :=\,\,  \int_{0}^{1} \mathbf J(t) dt\, .
\end{eqnarray}
This expression measures the net flow of probability 
in a single period $[0,\tau_D]$.

\subsection*{Average current in the adiabatic limit}
In the adiabatic limit $\tau_D \to \infty$,
 both $\mathbf J$ and $Q$
can be expressed in terms of  a certain differential $C_1(\Gamma;\Bbb R)$-valued $1$-form $A$.

For each $(E,W) \in \cal M_{\Gamma}$ and $\beta > 0$, 
the negative of the restricted boundary map
\[
-\partial\: \text{im}(\hat g^{-1}\partial^*) \to \tilde C_0(\Gamma;\Bbb R)
\]
is an isomorphism (here $\hat g^{-1}\partial^*\: C_0(\Gamma;\Bbb R) \to C_1(\Gamma;\Bbb R)$ and
$\text{im}(\hat g^{-1}\partial^*) \subset C_1(\Gamma;\Bbb R)$ denotes its image)
Let $\cal L\: \tilde C_0(\Gamma;\Bbb R) \to \text{im}(\hat g^{-1}\partial^*)$
denote the inverse transformation, and 
let $i\: \text{im}(\hat g^{-1}\partial^*) \to C_1(\Gamma;\Bbb R)$ denote
the inclusion.
Then $i\circ {\mathcal L}^{-1}$ defines a homomorphism 
\[
A(\beta,E,W)\: \tilde C_0(\Gamma;\Bbb R) \to C_1(\Gamma;\Bbb R) \, .
\]
For fixed $\beta$ and variable $(E,W)$,  this defines a smooth map
\[
A \: \cal M_{\Gamma}\times \tilde C_0(\Gamma;\Bbb R) \to C_1(\Gamma;\Bbb R)\, .
\]

The proof of the following is immediate.

\begin{lem}
\label{prop:determine-A} The map $A$ is uniquely characterized  by the
following properties: 
\begin{enumerate}
\item  The composition
\[
 {\mathcal M}_{\Gamma} \times \tilde C_0(\Gamma;\Bbb R) \overset{A}\to C_1(\Gamma;\Bbb R) \overset{-\partial}\to C_0(\Gamma;\Bbb R)
\]
coincides with second factor projection, and
\item for all ${\mathbf J}\in H_{1}(\Gamma)$, we have 
\[
\langle{\mathbf J},A\rangle_{\hat{g}}\,\,  =\,\, 0 \, ,
\]
where $\langle {-},{-}\rangle_{\hat{g}}$ is the inner product on 
$C_1(\Gamma;\Bbb R)$ defined  by 
$\langle \mathbf u,\mathbf v\rangle_{\hat{g}} = \mathbf u\hat g\mathbf v^t$.
\end{enumerate}
\end{lem}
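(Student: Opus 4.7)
The plan is to verify directly that the map $A$ just defined satisfies both listed properties, and then to deduce uniqueness from a clean orthogonal-complement argument. No heavy machinery is required: the content is linear algebra performed fiberwise over $\mathcal M_{\Gamma}$, and smoothness in the parameter $(E,W)$ will be automatic because $\hat g$ and $\hat\kappa$ depend smoothly on $(E,W)$ and $\mathcal L$ is defined by inverting a smoothly varying invertible linear map.

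First I would dispatch property (1), which is essentially tautological. By construction $A$ factors through the inclusion $i\colon \operatorname{im}(\hat g^{-1}\partial^{\ast})\hookrightarrow C_{1}(\Gamma;\mathbb R)$, and the map $\tilde C_{0}(\Gamma;\mathbb R)\to\operatorname{im}(\hat g^{-1}\partial^{\ast})$ preceding $i$ was defined precisely to invert the isomorphism $-\partial\colon \operatorname{im}(\hat g^{-1}\partial^{\ast})\xrightarrow{\sim}\tilde C_{0}(\Gamma;\mathbb R)$. Hence $(-\partial)\circ A$ is the identity of $\tilde C_{0}(\Gamma;\mathbb R)$, which is exactly the second-factor projection required by (1).

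For property (2), the key observation I would use is that $\hat g^{-1}\partial^{\ast}$ is the adjoint of $\partial$ when $C_{1}(\Gamma;\mathbb R)$ is equipped with $\langle\cdot,\cdot\rangle_{\hat g}$ and $C_{0}(\Gamma;\mathbb R)$ with its standard inner product. Since $\hat g$ is diagonal and positive self-adjoint, a short calculation gives
$$
\langle \mathbf J,\ \hat g^{-1}\partial^{\ast}y\rangle_{\hat g} \;=\; \mathbf J\cdot \partial^{\ast}y \;=\; (\partial \mathbf J)\cdot y
$$
for all $\mathbf J\in C_{1}(\Gamma;\mathbb R)$ and $y\in C_{0}(\Gamma;\mathbb R)$. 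Consequently, $\operatorname{im}(\hat g^{-1}\partial^{\ast})$ coincides with the $\hat g$-orthogonal complement of $\ker(\partial)=H_{1}(\Gamma;\mathbb R)$, and since $A$ takes values in $\operatorname{im}(\hat g^{-1}\partial^{\ast})$ by construction, (2) follows immediately.

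For uniqueness, suppose $A'$ is another map satisfying (1) and (2). Property (1) forces $(A'-A)(y)\in \ker(-\partial)=H_{1}(\Gamma;\mathbb R)$ for every $y\in\tilde C_{0}(\Gamma;\mathbb R)$, while applying (2) to both $A$ and $A'$ forces $(A'-A)(y)$ to be $\hat g$-orthogonal to $H_{1}(\Gamma;\mathbb R)$. Because $\langle\cdot,\cdot\rangle_{\hat g}$ is a genuine inner product, the intersection $H_{1}(\Gamma;\mathbb R)\cap H_{1}(\Gamma;\mathbb R)^{\perp_{\hat g}}$ is trivial, so $A=A'$. The closest thing to an obstacle is careful bookkeeping with the two inner products; once $\hat g^{-1}\partial^{\ast}$ is recognized as the weighted adjoint of $\partial$, the whole argument reduces to the familiar Hodge-style splitting $C_{1}(\Gamma;\mathbb R)=\ker(\partial)\oplus_{\hat g}\operatorname{im}(\hat g^{-1}\partial^{\ast})$.
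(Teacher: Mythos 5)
Your proof is correct and is exactly the argument the authors have in mind when they declare the lemma ``immediate'': property (1) is tautological from the definition of $A$ as the inclusion composed with the inverse of $-\partial|_{\operatorname{im}(\hat g^{-1}\partial^{\ast})}$, property (2) follows from recognizing $\hat g^{-1}\partial^{\ast}$ as the $\hat g$-adjoint of $\partial$, and uniqueness drops out of the resulting orthogonal splitting $C_{1}(\Gamma;\mathbb R)=\ker(\partial)\oplus_{\hat g}\operatorname{im}(\hat g^{-1}\partial^{\ast})$. The only cosmetic point worth noting is that for uniqueness you only need the containment $\operatorname{im}(\hat g^{-1}\partial^{\ast})\subseteq\ker(\partial)^{\perp_{\hat g}}$, and the equality you assert in passing is most quickly justified by a one-line dimension count using connectedness of $\Gamma$.
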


\begin{rem}\label{sum-formula-for-A} The operator $A$ defines 
the solution to Kirchoff's theorem
on electrical circuits (see \cite[p.\ 44]{Bollobas}). Property (2) above
amounts to Kirchoff's voltage law with $\hat g$ defining the resistance matrix. 

An explicit formula for $A$ is given as follows: choose a
basepoint $i \in \Gamma_0$.
Given $(W,E) \in \cal M_{\Gamma}$ and $\beta > 0$, define a linear transformation 
$A^e\: C_0(\Gamma;\Bbb R) \to C_1(\Gamma;\Bbb R)$
whose value at basis elements $j \in \Gamma_0$ is
\begin{equation} \label{eq:expression_for_current}
A^e(j) = \sum_{T} Q_{i}^{T,j}\varrho^{\text{B}}_T \qquad j \in \Gamma_0\, ,
\end{equation}
where the sum is over all spanning trees of $\Gamma$. The term 
$Q_{i}^{T,j}$ is the element of $C_1(\Gamma;\Bbb R)$
defined by the signed sum of edges along the unique path from $i$ to $j$ along
$T$, where an edge has sign $+1$ if and only if its orientation coincides with the path.
The term $\varrho^{\text{B}}_T$
is the real number given by the $T$-component of the Boltzmann distribution whose vector space has basis the set of spanning trees of $\Gamma$, where the energy
function is given by $\sum_{\alpha\in T_1} W_{\alpha}$.
Then $A^e$ restricted to the subspace $\tilde C_0(\Gamma;\Bbb R)$ coincides with $A$.
\end{rem}



Given a periodic driving protocol $(\tau_D,\gamma)$ and $\betaÊ>0$,
application of the normalized Boltzmann distribution gives a loop of normalized population vectors
\[
\mathbf  \rho^{B}(\gamma)\: [0,1] \to \bar C_0(\Gamma;\Bbb R)
\]
given by $t\mapsto \mathbf  \rho^{B}(\gamma(t))$.
Taking the time derivative, we obtain a loop of reduced population vectors
\[
\dot{\mathbf \rho}^{\text{\rm B}}(\gamma)\: [0,1] \to \tilde C_0(\Gamma;\Bbb R) \, .
\]
Then application of $A$ to the pair $(\gamma,\dot{\mathbf \rho}^{\text{\rm B}}(\gamma))$
yields a loop of currents 
$$
A(\gamma,\dot{\mathbf \rho}^{\text{\rm B}}(\gamma)) \:[0,1] \to C_1(\Gamma;\Bbb R)
$$
(This procedure describes a smooth map
$L M_{\Gamma} \to LC_1(\Gamma;\Bbb R)$.)

The following is then a straightforward consequence of the definitions combined with
the Adiabatic Theorem \ref{cor:adiabatic_theorem}.

\begin{prop} \label{adiabatic_defn} Let $\beta > 0$ be fixed. Then in the adiabatic limit we have
\[
\lim_{\tau_D\to \infty} \mathbf J(\beta,\tau_D,\gamma)(t) = A(\gamma(t),\dot{\mathbf \rho}^{\text{\rm B}}(\gamma)(t))
\]
and
\[
\lim_{\tau_D\to \infty} Q(\beta,\tau_D,\gamma) = \int_0^1 A(\gamma(t),\dot{\mathbf \rho}^{\text{\rm B}}(\gamma)(t)) dt\, .
\]
\end{prop}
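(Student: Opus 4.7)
The plan is to extract the leading adiabatic behavior of the periodic solution $\mathbf{\rho}(t)$ of the master equation~\eqref{master-equation} via an expansion in inverse powers of $\tau_D$, and then identify the resulting current with $A(\gamma(t), \dot{\mathbf{\rho}}^{\text{B}}(\gamma(t)))$ by means of the characterization in Lemma~\ref{prop:determine-A}. Concretely, the Adiabatic Theorem furnishes an expansion
\[
\mathbf{\rho}(t) \,=\, \mathbf{\rho}^{\text{B}}(\gamma(t)) + \tau_D^{-1}\mathbf{\rho}^{(1)}(t) + O(\tau_D^{-2})\, ,
\]
uniform in $t \in [0,1]$. Substituting into $\dot{\mathbf{\rho}} = \tau_D H(\gamma(t))\mathbf{\rho}$ and using $H\,\mathbf{\rho}^{\text{B}} = 0$, comparison of the $\tau_D^{0}$ coefficients yields $H(\gamma(t))\, \mathbf{\rho}^{(1)}(t) = \dot{\mathbf{\rho}}^{\text{B}}(\gamma(t))$.

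The next ingredient is the algebraic identity $\hat g^{-1}\partial^{*}\hat{\kappa}\, \mathbf{\rho}^{\text{B}} = 0$. Since $\mathbf{\rho}^{\text{B}}_i \propto e^{-\beta E_i}$ while $\kappa_i = e^{\beta E_i}$, the vector $\hat{\kappa}\,\mathbf{\rho}^{\text{B}}$ is a constant multiple of the all-ones vector $(1,\dots,1) \in C_0(\Gamma;\mathbb{R})$, which lies in $\ker \partial^{*}$ because the constant pairs to zero against the boundary $\partial\alpha = d_0\alpha - d_1\alpha$ of any edge $\alpha$. Hence the factor $\tau_D$ in the definition of $\mathbf{J}(t)$ is absorbed into the first-order correction:
\[
\mathbf{J}(t) \,=\, \tau_D\, \hat g^{-1}\partial^{*}\hat{\kappa}\, \mathbf{\rho}(t) \,=\, \hat g^{-1}\partial^{*}\hat{\kappa}\, \mathbf{\rho}^{(1)}(t) + O(\tau_D^{-1})\, ,
\]
uniformly in $t$.

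Setting $\mathbf{J}^{(0)}(t) := \hat g^{-1}\partial^{*}\hat{\kappa}\, \mathbf{\rho}^{(1)}(t)$, I would then verify the two defining properties of $A(\gamma(t), \dot{\mathbf{\rho}}^{\text{B}}(\gamma(t)))$ from Lemma~\ref{prop:determine-A}. Property (1) is immediate: $-\partial\, \mathbf{J}^{(0)}(t) = H(\gamma(t))\, \mathbf{\rho}^{(1)}(t) = \dot{\mathbf{\rho}}^{\text{B}}(\gamma(t))$. For property (2), any conserved current $\mathbf{J} \in H_1(\Gamma;\mathbb{R})$ satisfies
\[
\langle \mathbf{J},\, \mathbf{J}^{(0)}(t)\rangle_{\hat g} \,=\, \mathbf{J}^{t}\hat g\, \mathbf{J}^{(0)}(t) \,=\, \mathbf{J}^{t}\partial^{*}\hat{\kappa}\, \mathbf{\rho}^{(1)}(t) \,=\, (\partial\mathbf{J})^{t}\hat{\kappa}\, \mathbf{\rho}^{(1)}(t) \,=\, 0\, .
\]
Uniqueness in Lemma~\ref{prop:determine-A} then yields the first limit, and the integral statement follows by uniform convergence on the compact interval $[0,1]$.

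The only genuine obstacle is to establish existence of a smooth correction $\mathbf{\rho}^{(1)}(t)$ with uniform $O(\tau_D^{-1})$ remainder, but this is precisely the content of the Adiabatic Theorem being invoked. Pointwise solvability of $H(\gamma(t))\mathbf{\rho}^{(1)}(t) = \dot{\mathbf{\rho}}^{\text{B}}(\gamma(t))$ itself poses no difficulty, since $\dot{\mathbf{\rho}}^{\text{B}}(\gamma(t)) \in \tilde{C}_0(\Gamma;\mathbb{R}) = \text{im}\, H(\gamma(t))$, the second equality coming from self-adjointness of $H$ with respect to the $\hat{\kappa}$-inner product; moreover, the kernel ambiguity in $\mathbf{\rho}^{(1)}(t)$ by a multiple of $\mathbf{\rho}^{\text{B}}$ does not affect $\mathbf{J}^{(0)}(t)$, thanks to the vanishing identity above.
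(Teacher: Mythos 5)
Your argument correctly identifies the crucial algebraic fact that makes the $\tau_D$ prefactor harmless, namely that $\hat g^{-1}\partial^{*}\hat\kappa\,\rho^{\text{B}}=0$ (not merely $H\rho^{\text{B}}=0$): since $\kappa_i\rho^{\text{B}}_i \propto 1$, the vector $\hat\kappa\rho^{\text{B}}$ is constant and is annihilated by $\partial^*$. The verification that the leading current satisfies the two characterizing properties of Lemma~\ref{prop:determine-A} (lying over the right divergence, and $\hat g$-orthogonality to conserved currents) is also correct. The paper treats this proposition as a ``straightforward consequence'' and gives no proof of its own, so your detailed argument fills a genuine gap.

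However, the step where you invoke the Adiabatic Theorem to produce an expansion $\rho(t)=\rho^{\text{B}}(\gamma(t))+\tau_D^{-1}\rho^{(1)}(t)+O(\tau_D^{-2})$ with a $\tau_D$-independent $\rho^{(1)}$ and an $O(\tau_D^{-2})$ remainder overstates what is available. Corollary~\ref{cor:adiabatic_theorem} asserts only that $\rho(t)\to\rho^{\text{B}}(\gamma(t))$, and its proof yields $\xi(t):=\rho(t)-\rho^{\text{B}}(\gamma(t))=O(\tau_D^{-1})$; neither implies that $\tau_D\xi(t)$ converges (to a $\tau_D$-independent limit), nor that the error after that is $O(\tau_D^{-1})$. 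That finer statement is exactly what you need, since it is equivalent to $\dot\xi(t)\to 0$. A cleaner route, which makes the required estimate explicit rather than hiding it in an assumed expansion, is to observe that $\mathbf J(t)=\tau_D\hat g^{-1}\partial^*\hat\kappa\,\xi(t)$ lies in $\mathrm{im}(\hat g^{-1}\partial^*)$ for every $\tau_D$, so by the defining property of $A$ one has the \emph{exact} identity $\mathbf J(t)=A\bigl(\gamma(t),\,-\partial\mathbf J(t)\bigr)=A\bigl(\gamma(t),\,\dot\rho^{\text{B}}(\gamma(t))+\dot\xi(t)\bigr)$ via the continuity equation $\partial\mathbf J=-\dot\rho$. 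Linearity and continuity of $A$ in its second argument then reduce the proposition to showing $\dot\xi(t)\to 0$ uniformly, which still requires an argument beyond the bare statement of the Adiabatic Theorem --- one has to return to the evolution-operator integral formula of Proposition~\ref{periodic_solution} and Lemma~\ref{estimates} and estimate $\dot\xi=\tau_D H\xi-\dot\rho^{\text{B}}$ directly. You should either carry out that estimate or at least flag that the expansion you assume is stronger than what Corollary~\ref{cor:adiabatic_theorem} literally asserts.
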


By appealing to Lemma \ref{prop:determine-A}, one sees that 
the image of the adiabatic limit of $Q$ 
is contained in $H_1(\Gamma;\Bbb R)$, so it  defines a smooth map 
\begin{equation}\label{eq:Q-beta}
Q_{\beta}\: LM_\Gamma \to H_1(\Gamma;\Bbb R)\, ,
\end{equation}
where in this notation $Q_\beta(\gamma) := \lim_{\tau_D\to \infty} Q(\beta,\tau_D,\gamma)$.

\begin{rem} As our main results are stated in the adiabatic limit,
there is no loss in pretending, even
before taking the adiabatic limit, that the average current 
is given by the expression on the right-hand side of Proposition \ref{adiabatic_defn}. With this change, the average current is defined without having to refer to either $\tau_D$ or to 
solutions of the master equation.
\end{rem}

\begin{defn} The map 
\[
Q_\beta  \: 
L\cal M_\GammaÊ@>>> H_1(\Gamma;\Bbb R)\, ,
\]
given by Eq.~\eqref{eq:Q-beta}
is called the {\it analytical current map}.
\end{defn}




\section{Good parameters}
\label{sec:good_parameters}

\subsection*{Spanning trees}
For each total ordering $\sigma$ of the set of edges $\Gamma_1$, we may 
define a spanning tree $T_\sigma$ for $\Gamma$ by sequentially removing the links with the highest possible value in the ordering such that the remaining graph remains connected.
Explicitly, let $\alpha_1$ in $\Gamma_1$ be maximal. We discard
 $\alpha_1$ if and only if the graph $\Gamma \setminus \alpha_1 := (\Gamma_0,\Gamma_1 \setminus \{\alpha_1\})$ is connected.
Otherwise, we retain $\alpha_1$. We next consider the edge $\alpha_2$
which is maximal for $\Gamma \setminus \alpha_1$. This is discarded 
if $\Gamma - \{\alpha_1,\alpha_2\}$ is connected.  
Repeating this process, the edges which are retained form
a tree $T_\sigma$.

\begin{defn} The tree $T_{\sigma}$ given by the above procedure
is called the {\it spanning tree associated
with $\sigma$}, or simply the {\it $\sigma$-spanning tree}.
\end{defn}

\begin{figure} 
\includegraphics[scale=.3]{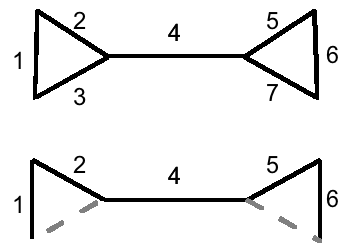}
\captionsetup{name=Fig.\!}
\caption{A graph with a total ordering of its edges and
its associated $\sigma$-spanning tree.}
\label{bow-tie}
\end{figure}

\begin{ex} Consider the graph with total ordering $\sigma$ of its edges depicted
at the top of Fig.~\ref{bow-tie}. The associated $\sigma$-spanning tree is gotten as follows.
If we remove the edge labeled $7$, then the graph is connected, so we discard this edge.
In $\Gamma \setminus 7$, the edge labeled 6 disconnects the graph when it is
removed, so edge 6 is retained. Continuing in this fashion, the all edges but
those labeled 3 and 7 are retained. This results in the spanning tree indicated by 
the path $1\to2\to 4\to 5\to 6$ indicated in the bottom of Fig.~\ref{bow-tie}.
\end{ex}

\begin{ex}\label{sigma-from-W} A total ordering $\sigma$ is determined by a choice of non-degenerate
barrier energies $W\: \Gamma_1 \to \Bbb R$, where $\alpha < \alpha'$ if and only
if $W_\alpha < W_{\alpha'}$. For any such $W$ and
any spanning tree $T$ we introduce the number
\[
w = w(T,W)=\sum_{\alpha \in \Gamma_{1}\setminus T_{1}}W_{\alpha} \, .
\]
When $W$ is understood, we sometimes write $w(T)$ for $w(T,W)$.
\end{ex}

\begin{prop} \label{spanning-tree-prop}
Let $W\:\Gamma_{1} \to \mathbb{R}$ be nondegenerate. 
Let $\sigma$ be the ordering of edges associated with $W$, as in Example 
\ref{sigma-from-W}. Then, for any spanning tree $T \subset \Gamma$ with $T \ne T_{\sigma}$, there is a spanning tree $T' \in \Gamma$, so that $w(T',W) > w(T,W)$.
\end{prop}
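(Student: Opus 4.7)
The plan is to exploit the greedy structure of the reverse-delete construction of $T_{\sigma}$ via the cycle property, and then produce $T'$ by a single matroid exchange. Since $w(T,W) = \sum_{\alpha \in \Gamma_{1}} W_{\alpha} - \sum_{\alpha \in T_{1}} W_{\alpha}$ and the first sum is a constant independent of $T$, increasing $w$ is equivalent to decreasing the total edge-weight of the spanning tree; it therefore suffices to swap one edge of $T$ for a strictly $W$-lighter one while keeping $T$ a spanning tree.

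The key lemma I would prove is the cycle property of $T_{\sigma}$: for every cycle $C \subset \Gamma$, the $W$-maximum edge of $C$ does not lie in $T_{\sigma}$. This is where the construction of $T_{\sigma}$ enters essentially. Let $\alpha$ be the $W$-maximum edge of $C$. Because edges are examined in decreasing $W$-order and $W$ is nondegenerate, at the moment $\alpha$ is examined every other edge of $C$ has strictly smaller $W$-weight, has not yet been examined, and is therefore still present in the current graph. The subpath $C \setminus \{\alpha\}$ then provides a detour between the endpoints of $\alpha$, so removing $\alpha$ leaves the current graph connected, forcing the algorithm to discard $\alpha$.

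With the cycle property in hand, the exchange is essentially forced. Since $T$ and $T_{\sigma}$ are spanning trees of the same graph they have equally many edges, so one can choose $\beta \in T \setminus T_{\sigma}$. The subgraph $T_{\sigma} \cup \{\beta\}$ contains a unique cycle $C_{\beta}$; since the remaining edges of $C_{\beta}$ all lie in the tree $T_{\sigma}$, the cycle property forces $\beta$ itself to be the $W$-maximum of $C_{\beta}$. Removing $\beta$ from $T$ separates $T$ into two components, and the $T_{\sigma}$-path $C_{\beta} \setminus \{\beta\}$ between the endpoints of $\beta$ must cross this cut in some edge $\gamma$; as $\beta$ is the only edge of $T$ crossing this cut, $\gamma \notin T$. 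Hence $T' := (T \setminus \{\beta\}) \cup \{\gamma\}$ is a spanning tree, and $W_{\gamma} < W_{\beta}$ by the cycle property, so $w(T',W) - w(T,W) = W_{\beta} - W_{\gamma} > 0$, as required.

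The main obstacle I expect is the clean formulation of the cycle property, since one must track carefully which edges of $\Gamma$ are present at each stage of the reverse-delete process. The nondegeneracy hypothesis on $W$ is essential here: it guarantees strict comparability of weights, so that the lower-weight edges of a cycle are genuinely untouched at the moment its maximum edge is examined, and likewise guarantees that the exchange produces a strict inequality.
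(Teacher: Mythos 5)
Your proof is correct, and it takes a genuinely different route from the paper's. The paper works directly with the two sequences $(\alpha_{1},\ldots,\alpha_{k})$ and $(\beta_{1},\ldots,\beta_{k})$ of edges removed to form $T_{\sigma}$ and $T$, sorted in decreasing $W$-order; it locates the smallest index $j$ at which the sequences diverge, observes that in the partially pruned graph $\Gamma'=\Gamma\setminus\{\alpha_{1},\ldots,\alpha_{j-1}\}$ the edge $\alpha_{j}$ is the heaviest non-bridge (so $W_{\beta_{i}}<W_{\alpha_{j}}$ for $i\ge j$ and hence $\alpha_{j}\in T$), and then performs a single exchange by showing some $\beta_{s}$ with $s\ge j$ reconnects the two pieces of $T\setminus\alpha_{j}$. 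You instead first isolate the \emph{cycle property} of $T_{\sigma}$ as a clean lemma --- for every cycle $C$, the $W$-maximum edge of $C$ is discarded by the reverse-delete process --- and then run the standard matroid exchange: pick $\beta\in T\setminus T_{\sigma}$, let $C_{\beta}$ be the fundamental cycle of $\beta$ with respect to $T_{\sigma}$, note that the cycle property forces $\beta$ to be the heaviest edge of $C_{\beta}$, and trade $\beta$ for the edge $\gamma\in C_{\beta}\setminus\{\beta\}$ that crosses the cut determined by $T\setminus\beta$. Both arguments are sound for the multigraphs (multiple edges, loop edges) allowed in the paper, since a spanning tree can never contain a loop edge. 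What your route buys is modularity and a connection to classical minimum-spanning-tree theory; what the paper's route buys is self-containment and a slightly shorter direct argument without a preparatory lemma. A small bonus of your version: the exchange always adds an edge of $T_{\sigma}\setminus T$ (since $\gamma\in C_{\beta}\setminus\{\beta\}\subset T_{\sigma}$), so iterating it yields an explicit monotone path from $T$ to $T_{\sigma}$ of length $|T\setminus T_{\sigma}|$, which immediately re-derives the uniqueness statement in the paper's Corollary~\ref{cor:maximum}.
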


\begin{proof} Let $(\alpha_{1},\ldots,\alpha_{k})$ and $(\beta_{1},\ldots,\beta_{k})$ be the elements of $\Gamma_{1}\setminus(T_{\sigma})_{1}$ and $\Gamma_{1}\setminus T_{1}$, respectively, in the decreasing order with respect to $\sigma$, i.e., the barrier energies are decreasing from left to right. Let $j$ be smallest  index such that
\begin{itemize}
\item $\alpha_{j} \ne \beta_{j}$, and 
\item $\alpha_{i}=\beta_{i}$ for $i < j$.
\end{itemize} 
Consider the spanning subgraph $\Gamma' \subset \Gamma$, obtained from $\Gamma$ by withdrawing the edges $\alpha_{1},\ldots,\alpha_{j-1}$, or equivalently $\beta_{1},\ldots,\beta_{j-1}$. By the definition of $T_{\sigma}$, the edge $\alpha_{j}$ is not a bridge of $\Gamma'$ (i.e., its withdrawal does not disconnect the graph), and $W_{\beta_{i}} < W_{\alpha_{j}}$ for $i \ge j$. Let $T^{(1)}$ and $T^{(2)}$ be the two trees obtained from $T$ by withdrawing the edge $\alpha_{j}$. Then there is at least one edge, say $\beta_{s}$, among $\beta_{j},\beta_{j+1},\ldots,\beta_{k}$ that connects $T^{(1)}$ to $T^{(2)}$, since otherwise the edge $\alpha_{j}$ would be a bridge of $\GammaÕ$. Therefore, by replacing the edge $\beta_{s}$ with $\alpha_{j}$ in $T$ results in another spanning tree, denoted $T'$ that obviously satisfies the condition of the proposition, since $W_{\alpha_{j}}>W_{\beta_{s}}$.
\end{proof}

Proposition \ref{spanning-tree-prop} gives an immediate characterization of
 $\sigma$-spanning trees in terms of the function $w({-},W)$. Let $\cal T_{\Gamma}$
 denote the set of spanning trees of $\Gamma$.

\begin{cor} \label{cor:maximum} With $\sigma$ and $W$ as in Proposition \ref{spanning-tree-prop}, the $\sigma$-spanning tree $T_{\sigma}$
is the unique maximizer of the function
 $w({-},W)\: \cal T_{\Gamma} \to \Bbb R$.
\end{cor}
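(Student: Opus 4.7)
The plan is to deduce Corollary \ref{cor:maximum} directly from Proposition \ref{spanning-tree-prop} by a short finiteness argument. The key observation is that $\cal T_{\Gamma}$ is a finite set (the graph $\Gamma$ has only finitely many spanning trees), so the function $w(-,W)\: \cal T_\Gamma \to \Bbb R$ attains a maximum.

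First I would invoke Proposition \ref{spanning-tree-prop} in its contrapositive form: if $T\in \cal T_\Gamma$ is a spanning tree at which $w(-,W)$ attains its maximum, then we must have $T = T_\sigma$, since otherwise the proposition would produce a spanning tree $T'$ with $w(T',W) > w(T,W)$, contradicting maximality. Combining this with the existence of a maximizer (by finiteness) shows that $T_\sigma$ is a maximizer of $w(-,W)$.

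For uniqueness, I would apply Proposition \ref{spanning-tree-prop} once more: any spanning tree $T\ne T_\sigma$ is strictly dominated by some other spanning tree, hence cannot achieve the maximum value. Therefore $T_\sigma$ is the unique maximizer.

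There is no real obstacle here; the content of the corollary is entirely packaged in Proposition \ref{spanning-tree-prop}. The only points to verify carefully are (i) that $\cal T_{\Gamma}$ is nonempty and finite, which follows because $\Gamma$ is connected and finite, and (ii) that the nondegeneracy hypothesis on $W$ is what allows us to compare $w$-values strictly, so that Proposition \ref{spanning-tree-prop} yields a strict inequality and hence genuine uniqueness rather than mere extremality.
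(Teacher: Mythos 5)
Your argument is correct and is precisely the implicit reasoning the paper relies on when it calls the corollary "immediate": finiteness of $\cal T_\Gamma$ guarantees a maximizer exists, and Proposition \ref{spanning-tree-prop} rules out every $T \ne T_\sigma$ from being one. There is nothing more to add.
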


\begin{rem} 
One can restate the last corollary so as to depend only on $\sigma$:
For $\alpha \in \Gamma_1$ set $W_\sigma(\alpha) = k$ if 
$\alpha$ is the $k$-th element in the partial ordering given by $\sigma$.
Now define $\omega(T,\sigma) := w(T,W_\sigma)$. Then by Corollary \ref{cor:maximum},
 $T_\sigma$ 
is the unique maximizer of
$\omega({-},\sigma)\: \cal T_{\Gamma} \to \Bbb R$.
\end{rem}

\begin{rem} There is a useful  alternative characterizing property of the $\sigma$-spanning tree $T_{\sigma}$ associated with a nondegenerate barrier function $W$: for each withdrawn edge 
(i.e., an edge not in $T_{\sigma}$) with $d(\alpha)  = \{i, j\}$, the barrier $W_{\alpha}$  is higher than any of the barriers associated with the edges of the unique embedded path
which connects $i$ to $j$ inside $T_{\sigma}$. For this reason we named $T_{\sigma}$ the  {\it minimal spanning tree} of $W$ in \cite{CKS1}.
\end{rem}

\subsection*{Good parameters and the weak map $\breve q$}
Define an open subset
$$
\breve {\cal M}_{\Gamma} \subset {\cal M}_{\Gamma}
$$
as follows: a pair $(E,W)$ lies in $\breve {\cal M}_{\Gamma}$ if and only if
one of the following conditions hold:
\begin{enumerate}
\item there is only one absolute minimum for $E\: \Gamma_0\to \Bbb R$, or
\item the function $W\: \Gamma_1 \to \Bbb R$ is one-to-one, i.e., the edges are distinguished
by their barrier energies. (In this instance we say that $W$ is {\it nondegenerate.})
\end{enumerate}
We call $\breve {\cal M}_{\Gamma} $ the {\it space of good parameters}.

Let $U$ be the set of $(E,W)$ satisfying the first condition and let $V$ be the set
of $(E,W)$ satisfying the second. Then
\[
\breve {\cal M}_{\Gamma} = U \cup V
\]
where $U$ and $V$ are open. Each connected component of $U$ is defined by specifying 
a vertex $v\in \Gamma_0$, whereas each connected component of $V$ is given by specifying
a total ordering $\sigma$ of $\Gamma_1$. Consequently, we have decompositions into
connected components
\[
U = \coprod_{v} U_v    \qquad \text{ and }\qquad  V = \coprod_{\sigma} V_\sigma\, .
\]
For each vertex $v\in \Gamma_0$, let $B_{1/3}(v)$ be the set of points in $|\Gamma|$
which have distance $< 1/3$ from $v$ in the natural metric on $|\Gamma|$ that
gives every edge a length of $1$.
Let 
$N_v \subset \breve {\cal M}_{\Gamma} \times |\Gamma|$ be the subspace
given by $U_v \times B_{1/3}(v)$. Then the second factor projection
$$
N_v \to U_v
$$
is a homotopy equivalence (it is the cartesian product of $U_v$ with
$B_{1/3}(v)$).
Now set $N_U = \amalg_v N_v$. Then the projection
$N_U \to U$ is also a homotopy equivalence. Call this projection $p_U$.

For a given $\sigma$, we let 
$N_\sigma \subset \breve {\cal M}_{\Gamma} \times |\Gamma|$
be the subset consisting of $V_\sigma \times |T_\sigma|(1/3)$, where $T_\sigma$
is the $\sigma$-spanning tree and
$|T_{\sigma}|(1/3)$ consists of the points of $|\Gamma|$ whose distance to 
$|T_{\sigma}|$ is $< 1/3$. Then the projection $N_{\sigma} \to V_{\sigma}$
is a homotopy equivalence (it is the cartesian product of $V_{\sigma}$ with 
a metric tree). Set $N_{V} = \amalg_{\sigma} V_{\sigma}$. Then the projection
$p_V\: N_V \to V$ is a homotopy equivalence.

Notice that $p_{U}^{-1}(U_v \cap N_\sigma) \subset N_{\sigma}$.
Consequently, if we set
\[
N = N_{U} \cup N_V\, ,
\]
then 
a straightforward application of the gluing lemma \cite{tom_Dieck}
shows that the first factor projection 
\[
p_1\: N \to \breve {\cal M}_{\Gamma}
\] 
is a homotopy
equivalence.

\begin{defn} For good parameters, the weak map $\breve q$ 
is given by
\begin{equation} \label{eq:good_weak_q}
\breve {\cal M}_{\Gamma} @< p_1<{}^\sim <   N @> p_2 >> |\Gamma|\, ,
\end{equation}
where $p_2$ denotes the second factor projection.
\end{defn}



\section{A weak form of the Pumping Quantization Theorem}
\label{sec:PQT}

Recall the decomposition 
\[
\breve {\cal M}_{\Gamma} = U \cup V
\]
of the previous section, where 
$$
U = \coprod_{j}  U_j \, , \qquad V = \coprod_{\sigma} V_{\sigma} \, ,
$$
where $j$ ranges through the elements of $\Gamma_0$ and $\sigma$ ranges through
the set of total orderings of $\Gamma_1$. 

Given a loop $\gamma\in L\breve {\cal M}_{\Gamma}$, it will
be convenient in what follows to think of $\gamma$ 
as a smooth map $C \to \breve {\cal M}_{\Gamma}$, where $C$ 
denotes the circle of radius $1/(2\pi)$. 
Let $I \subset C$ be a closed arc.
The contribution along $I$ to the analytical current map is then given by 
the integral
\[
\int_I \mathbf J ds \in C_1(\Gamma;\Bbb R)
\]
where we parametrize $I$ with respect to arc length. That is, if $I_1,\dots I_k$ is
a simplicial decomposition of $C$ into closed arcs, then
\[
Q_\beta(\gamma) = \sum_{j=1}^k  \int_{I_j} \mathbf J ds\, .
\]
Assume that $\gamma(I) \subset U$; in this instance we say
$C$ is of {\it type $U$}.

\begin{lem}\label{type-U} If $I$ is of type $U$, then in the low
temperature limit the contribution 
along $I$ to  $Q_\beta(\gamma)$ is trivial.\end{lem}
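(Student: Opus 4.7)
\textbf{Proof plan for Lemma \ref{type-U}.}

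My plan is to combine the uniform $\beta\to \infty$ decay supplied by Lemma \ref{Boltzmann_lemma} with a $\beta$-uniform operator bound on $A(\gamma(t),{-})$ obtained from the spanning-tree expansion in Remark \ref{sum-formula-for-A}. Since $\mathbf J(t)$ is (in the adiabatic limit) the linear image of $\dot{\mathbf \rho}^{\text{B}}(\gamma(t))$ under $A(\gamma(t),{-})$, showing that the first factor tends to zero uniformly while the second stays bounded in operator norm will force $\int_I \mathbf J\, ds \to 0$.

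First, because $I$ is connected and $U = \coprod_{v \in \Gamma_0} U_v$ is a disjoint union of open sets, $\gamma(I)$ lies in a single component $U_v$. By the definition of $U_v$, the well-energy function $E(\gamma(t))\in C_0(\Gamma;\Bbb R)$ has vertex $v$ as its unique minimum for every $t \in I$, i.e.\ $E(\gamma(t))$ is non-degenerate in the sense of the paragraph preceding Lemma \ref{Boltzmann_lemma}. Applying that lemma (after reparametrizing $I$ by arc length on $[0,1]$) to the map $t \mapsto E(\gamma(t))$ yields that
\[
\dot{\mathbf \rho}^{\text{B}}(\gamma(t)) \longrightarrow 0 \quad \text{uniformly in } t \in I \text{ as } \beta \to \infty.
\]

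Next I would bound $A(\gamma(t),{-})$ uniformly in both $t$ and $\beta$. Using the explicit formula from Remark \ref{sum-formula-for-A},
\[
A^e(j) \,=\, \sum_{T \in \mathcal T_\Gamma} Q_i^{T,j}\, \varrho^{\text{B}}_T,
\]
the chains $Q_i^{T,j}$ are $\beta$-independent and there are only finitely many of them, while the coefficients $\varrho^{\text{B}}_T$ form a probability distribution on $\mathcal T_\Gamma$ (in particular $0 \le \varrho^{\text{B}}_T \le 1$). Hence for any fixed norm on $C_1(\Gamma;\Bbb R)$ the operator norm of $A(\gamma(t),{-})$ is bounded by $\max_{T,j} \|Q_i^{T,j}\|$, uniformly in $t \in I$ and in $\beta > 0$.

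Combining the two steps, $\mathbf J(t) = A(\gamma(t),\dot{\mathbf \rho}^{\text{B}}(\gamma(t)))$ tends to $0$ uniformly on $I$ as $\beta \to \infty$, and integrating over the arc (whose length is finite) gives
\[
\lim_{\beta \to \infty} \int_I \mathbf J\, ds \,=\, 0
\]
in $C_1(\Gamma;\Bbb R)$, as required. The only point requiring real care is the $\beta$-uniformity of the bound on $A$: a priori $A$ is built from $\hat g^{-1} = e^{-\beta \hat W}$, whose entries blow up or collapse with $\beta$, so without the spanning-tree expansion one might worry that the factors of $\beta$ appearing in $\dot{\mathbf \rho}^{\text{B}}$ could be amplified. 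The Kirchhoff/spanning-tree identity of Remark \ref{sum-formula-for-A} is exactly what neutralizes this, packaging $A$ as a probabilistic average of $\beta$-independent chains, and this is the step I expect to be the main technical obstacle.
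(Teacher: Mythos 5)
Your argument is correct and follows the same route as the paper: apply Lemma \ref{Boltzmann_lemma} to get uniform decay of $\dot{\mathbf\rho}^{\text{B}}$ on $I$, then integrate. The one place you go beyond what the paper writes is in explicitly establishing a $\beta$-uniform operator bound on $A(\gamma(t),{-})$ via the spanning-tree expansion of Remark \ref{sum-formula-for-A}; the paper leaves this step implicit when it asserts that $\int_I A(\gamma,\dot{\mathbf\rho}^{\text{B}}(\gamma))\,ds\to 0$. That bound is genuinely needed --- as you correctly observe, $A$ is built from $\hat g^{-1}=e^{-\beta\hat W}$, whose entries are not $\beta$-uniformly controlled on their own, and the Kirchhoff identity is what rescues the estimate --- so filling in that detail is a small but worthwhile tightening of the published argument rather than a detour from it.
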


\begin{proof} On $I$ the
function $E \: \Gamma_0 \to \Bbb R$ has a unique absolute minimum $v$.
As $\beta$ tends to $\infty$, the value of Boltzmann distribution
$\rho^{\text{B}}$ restricted 
to arc $I$ tends to $v$. This is because the component of $\ell \in \Gamma_0$ in the Boltzmann
distribution is 
\[
\frac{e^{-\beta E_\ell}}{\sum_i e^{-\beta E_i}}Ê\, \,
\]
and the latter tends to zero on 
if $\ell \ne v$ and one if $\ell = v$ when $E_v$ is the unique minimum.
Consequently, as $\beta$ tends to $\infty$, 
the time derivative $\dot{\mathbf \rho}^{\text{B}}$ tends to zero  on 
$I$ (by Lemma \ref{Boltzmann_lemma}).

The contribution to the current of $\gamma$ along $I$ is given by the integral
\[
 \int_{I} A(\gamma,\dot{\mathbf \rho}^{\text{B}}(\gamma))  ds \, ,
\]
(using Proposition \ref{adiabatic_defn}).
When $\beta$ tends to infinity, this expression tends to zero.
\end{proof}

Now consider a closed arc $I  = [a,b]\subset C$ with endpoints $a,b$ such that
\begin{itemize}
 \item $\gamma(I) \subset V$, and
\item $\gamma(\partial I) \subset U$.
\end{itemize}
In this instance we say $I$ is of {\it type $V$}. 

\begin{lem} \label{lem:current-formula} If $I$ is of type $V$, 
then in the low temperature limit the contribution along $I$ to
$Q_\beta(\gamma)$ is an element of
$C_1(\Gamma;\Bbb Z)$. 
\end{lem}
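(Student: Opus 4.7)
The plan is to reduce the integral $\int_I A(\gamma(s), \dot{\mathbf \rho}^{\text{B}}(\gamma(s)))\, ds$ (given by Proposition \ref{adiabatic_defn}) to a telescoping endpoint evaluation in the low temperature limit, using the explicit spanning-tree formula of Remark \ref{sum-formula-for-A}. Since $I$ is connected and $\gamma(I) \subset V = \coprod_\sigma V_\sigma$, one has $\gamma(I) \subset V_\sigma$ for a single total ordering $\sigma$, so the associated minimal spanning tree $T_\sigma$ is constant along $I$. For each spanning tree $T$ I introduce the $s$- and $\beta$-independent integer linear map $A^e_T \colon C_0(\Gamma;\Bbb R) \to C_1(\Gamma;\Bbb R)$ defined by $A^e_T(j) = Q_i^{T,j}$; Remark \ref{sum-formula-for-A} then expresses the operator as $A = \sum_T \varrho^{\text{B}}_T\, A^e_T$.

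Next I split the integrand as $A^e_{T_\sigma}(\dot{\mathbf \rho}^{\text{B}}) + \sum_T (\varrho^{\text{B}}_T - \delta_{T,T_\sigma})\, A^e_T(\dot{\mathbf \rho}^{\text{B}})$. Because $A^e_{T_\sigma}$ is independent of $s$, the main term integrates to the telescoping expression
\[
A^e_{T_\sigma}\bigl(\mathbf \rho^{\text{B}}(\gamma(b)) - \mathbf \rho^{\text{B}}(\gamma(a))\bigr).
\]
At the endpoints $\gamma(a), \gamma(b) \in U$ the well energy $E$ has a unique minimizer, call these vertices $v_a$ and $v_b$; by the pointwise argument in the proof of Lemma \ref{Boltzmann_lemma}, $\mathbf \rho^{\text{B}}(\gamma(a))$ and $\mathbf \rho^{\text{B}}(\gamma(b))$ converge as $\beta \to \infty$ to the basis vectors supported at $v_a$ and $v_b$ respectively. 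Thus the main term tends to $Q_i^{T_\sigma, v_b} - Q_i^{T_\sigma, v_a}$, which is the signed edge path in $T_\sigma$ joining $v_a$ to $v_b$, manifestly an element of $C_1(\Gamma;\Bbb Z)$.

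The main obstacle is controlling the residual error terms $\int_I(\varrho^{\text{B}}_T - \delta_{T,T_\sigma})\, A^e_T(\dot{\mathbf \rho}^{\text{B}})\, ds$, because $\dot{\mathbf \rho}^{\text{B}}$ itself grows with $\beta$. By Corollary \ref{cor:maximum}, $T_\sigma$ strictly maximizes $w(\cdot,W)$ pointwise on $I$; compactness of $I$ together with nondegeneracy of $W$ along $\gamma(I) \subset V_\sigma$ yields a uniform gap $\delta > 0$, so $|\varrho^{\text{B}}_T - \delta_{T,T_\sigma}| = O(e^{-\beta\delta})$ uniformly in $s \in I$. On the other hand, direct differentiation of $\mathbf \rho^{\text{B}}_j = e^{-\beta E_j}/Z$ gives $\dot{\mathbf \rho}^{\text{B}}_j = \beta\, \mathbf \rho^{\text{B}}_j\bigl(\langle \dot E\rangle - \dot E_j\bigr)$, whence $|A^e_T(\dot{\mathbf \rho}^{\text{B}}(\gamma(s)))| = O(\beta)$ uniformly in $s$. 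The product is $O(\beta\, e^{-\beta\delta})$, which vanishes uniformly on the bounded arc $I$, so the error integrals tend to zero in the low temperature limit. Combining these steps shows the contribution along $I$ converges to the integer chain identified above, completing the proof.
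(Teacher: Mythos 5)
Your argument is correct, and it follows a genuinely different route from the paper. The paper fixes a pair $(T,j)$ and integrates by parts in $\int_I \varrho^{\text{B}}_T\,\dot\rho^{\text{B}}_j\,ds$, moving the time derivative onto the tree-Boltzmann factor $\varrho^{\text{B}}_T$; it then invokes Lemma \ref{Boltzmann_lemma} (applied to the spanning-tree energy function, which is nondegenerate on $\gamma(I)\subset V_\sigma$) to kill the remaining integral, leaving only the boundary term $\varrho^{\text{B}}_T\rho^{\text{B}}_j|_a^b$, which converges to an integer because $\gamma(\partial I)\subset U$. You avoid integration by parts entirely: you split off the dominant tree $T_\sigma$, whose coefficient operator $A^e_{T_\sigma}$ is $s$-independent and hence integrates exactly by the fundamental theorem of calculus to $A^e_{T_\sigma}\bigl(\rho^{\text{B}}(\gamma(b))-\rho^{\text{B}}(\gamma(a))\bigr)$, and you control the remaining terms by balancing the $O(\beta)$ growth of $\dot\rho^{\text{B}}$ against the uniform $O(e^{-\beta\delta})$ decay of $\varrho^{\text{B}}_T-\delta_{T,T_\sigma}$, using Corollary \ref{cor:maximum} and compactness of $I$ to produce the uniform spectral gap $\delta>0$.

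What each approach buys: the paper's integration-by-parts argument is shorter and defers all quantitative estimates to Lemma \ref{Boltzmann_lemma}, but it proves only that the limiting contribution is \emph{some} integer chain. Your dominance argument is slightly more work on the error estimates (you must check $|\dot\rho^{\text{B}}_j|=O(\beta)$ uniformly, which is fine since $\rho^{\text{B}}_j\le 1$ and $\gamma$ is smooth, and you must extract the uniform gap), but it yields the explicit value $Q_i^{T_\sigma,v_b}-Q_i^{T_\sigma,v_a}$ of the limit. That explicit identification is in fact what the paper later needs for the strong Pumping Quantization and Realization Theorems: your decomposition into a leading $T_\sigma$-term plus exponentially small corrections is essentially the technique the paper deploys in the proof of Lemma \ref{lemma-limit-robust} (Eqs.~\eqref{arc-contr-transformed}--\eqref{Boltzmann-limit-trees-2}), so your proof of the weak lemma anticipates the refinement rather than merely reproving it.
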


\begin{proof} Fix a basepoint $i \in \Gamma_0$, $(E,W) \in {\cal M}_{\Gamma}$
and $\beta >0$. Recall from Remark \ref{sum-formula-for-A} the formula
\[
A^e(j) = \sum_{T} Q_{i}^{T,j}\varrho^{\text{B}}_T \qquad j \in \Gamma_0\, ,
\]
where $A^e \: C_0(\Gamma;\Bbb R) \to C_1(\Gamma;\Bbb R)$ restricts to $A$
on  $\tilde C_0(\Gamma;\Bbb R)$. Here $\varrho^{\text{B}}_T$ is
the $T$-component of the Boltzmann distribution for the 
 vector space whose basis is the set of spanning trees of $\Gamma$.
Recall also that the instantaneous current
$\mathbf J(t)$ is defined as $A(\dot{\rho}^{\text{B}})$, where $\rho^{\text{B}}$
in this case denotes the Boltzmann distribution for $C_0(\Gamma;\Bbb R)$. 
Hence, inserting $\dot\rho^{\text{B}}$ into the expression for $A^e$ gives
\begin{equation} \label{eq:J-formula}
\mathbf J(t) = \sum_{T,j} Q_{i}^{T,j}\varrho^{\text{B}}_T \dot\rho^{\text{B}}_j\, .
\end{equation} 
Recall that the average current is given by $\int_0^1 \mathbf J(t)$.
In particular, the contribution along $I$ is given by $\int_{c}^{d} \mathbf J(t) dt$
(where we are parametrizing $I$ with respect to arc  lengthÊ  and the limits of integration
come from the parametrization). 
Hence, integrating both sides of the last display, we obtain
\begin{equation}
Q = \sum_{T,j} Q_{i}^{T,j}\int_c^d\varrho^{\text{B}}_T \dot\rho^{\text{B}}_j\, .
\end{equation}
Since $Q_{i}^{T,j}$ is an integer valued $1$-chain, it will suffice to prove that
\[
\int_c^d\varrho^{\text{B}}_T \dot\rho^{\text{B}}_j
\]
tends to an integer as $\beta$ tends to $\infty$.
Using integration by parts, we may rewrite this expression as
\[
\varrho_T^{\text{B}} \rho_j^{\text{B}}|_{c}^{d} \,\, -\,\, 
\int_c^d \dot \varrho^{\text{B}}_T {\mathbf \rho}^{\text{B}}_j\, .
\] 
By Lemma \ref{Boltzmann_lemma}, $\dot {\mathbf \rho}^{\text{B}}_T$ tends to zero
as $\beta\to \infty$. Hence the constribution to the current along $I$ in the
low temperature limit is determined by the value of 
${\mathbf \varrho}_T^{\text{B}} {\mathbf \rho}_T^{\text{B}}|_{a}^{b}$. Since $\gamma(\partial I) \subset U$, 
we deduce by the argument of Lemma \ref{type-U} that the low temperature limit of ${\mathbf \varrho}_T^{\text{B}} {\mathbf \rho}_T^{\text{B}}|_{a}^{b}$ is an 
integer.
\end{proof}

As an immediate corollary, we obtain a weak version of Theorem \ref{thm:strongPQT}:

\begin{thm}[Weak Quantization] If $\gamma\in L\breve M_{\Gamma}$, then the low temperature
limit $\lim_{\beta\to \infty} Q_\beta(\gamma)$ is defined and lies in the integral lattice
$H_1(\Gamma;\Bbb Z) \subset H_1(\Gamma;\Bbb R)$.
\end{thm}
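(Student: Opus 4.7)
The plan is to exploit the two preceding lemmas by decomposing the circle $C$ into finitely many closed arcs, each of type $U$ or type $V$, and then summing the contributions. Since $\gamma\in L\breve{\cal M}_\Gamma = L(U\cup V)$, the preimages $\gamma^{-1}(U)$ and $\gamma^{-1}(V)$ form an open cover of $C$, which one then refines to the desired arc decomposition.

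In the generic case $\gamma^{-1}(U)\ne\emptyset$, the complement $K:=\gamma^{-1}(\breve{\cal M}_\Gamma\setminus U)$ is a compact subset of the open set $\gamma^{-1}(V)$, so one can choose an open neighborhood $W$ with $K\subset W$ and $\overline W\subset\gamma^{-1}(V)$ (a tubular neighborhood in dimension one). Then the components of $\overline W$ are closed type $V$ arcs---their endpoints lie in $\gamma^{-1}(V)\setminus K\subset\gamma^{-1}(U\cap V)$---and the components of $\overline{C\setminus W}$ are closed type $U$ arcs, since $C\setminus W\subset\gamma^{-1}(U)$ and the boundary points of such components also lie in $\gamma^{-1}(U\cap V)$. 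Each type $V$ arc is furthermore contained in a single component $V_\sigma$ by connectedness. Lemma \ref{type-U} then gives that type $U$ contributions to $Q_\beta(\gamma)$ vanish in the low temperature limit, while Lemma \ref{lem:current-formula} gives that type $V$ contributions converge to elements of $C_1(\Gamma;\Bbb Z)$. Summing the finitely many contributions, $\lim_{\beta\to\infty}Q_\beta(\gamma)$ exists and lies in $C_1(\Gamma;\Bbb Z)$.

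The one remaining possibility is $\gamma^{-1}(U)=\emptyset$, i.e., $\gamma(C)\subset V$; here $\gamma(C)$ sits in a single $V_\sigma$ by connectedness, but no decomposition of the above form is available. Instead, one revisits the computation in the proof of Lemma \ref{lem:current-formula}: using the identity $\mathbf J(t)=\sum_{T,j}Q_i^{T,j}\varrho_T^{\text{B}}\dot\rho_j^{\text{B}}$ and integrating over the full period, integration by parts yields
\begin{equation*}
Q_\beta(\gamma)\;=\;-\sum_{T,j}Q_i^{T,j}\int_0^1\dot\varrho_T^{\text{B}}\,\rho_j^{\text{B}}\,dt,
\end{equation*}
because $\varrho_T^{\text{B}}\rho_j^{\text{B}}|_0^1=0$ by periodicity of $\gamma$. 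Corollary \ref{cor:maximum} ensures that throughout $V_\sigma$ the tree energy $T\mapsto\sum_{\alpha\in T_1}W_\alpha$ has the unique minimizer $T_\sigma$, so Lemma \ref{Boltzmann_lemma} applies to the tree Boltzmann distribution and gives $\dot\varrho_T^{\text{B}}\to 0$ uniformly in $t$. Hence $\lim_{\beta\to\infty}Q_\beta(\gamma)=0$, which certainly lies in $H_1(\Gamma;\Bbb Z)$.

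Finally, each $Q_\beta(\gamma)$ lies in the closed subspace $H_1(\Gamma;\Bbb R)\subset C_1(\Gamma;\Bbb R)$, so the limit lies in $H_1(\Gamma;\Bbb R)$; combined with the integrality conclusion, this places the limit in $C_1(\Gamma;\Bbb Z)\cap H_1(\Gamma;\Bbb R)=H_1(\Gamma;\Bbb Z)$. The main subtlety is arranging for every type $V$ arc in the decomposition to have its endpoints inside $\gamma^{-1}(U)$, which is the hypothesis required by Lemma \ref{lem:current-formula}; thickening the closed set $K$ inside the open set $\gamma^{-1}(V)$ is the natural device, and the edge case $\gamma(C)\subset V$ must be handled separately via periodicity.
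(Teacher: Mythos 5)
Your proposal matches the paper's approach: the result is presented there as an ``immediate corollary'' of Lemmas \ref{type-U} and \ref{lem:current-formula}, and your argument is a careful elaboration of exactly the decomposition those lemmas are set up for. The extra paragraph handling $\gamma(C)\subset V$ is a genuine refinement worth noting: when $\gamma^{-1}(U)=\emptyset$ no arc decomposition of the required shape exists (Lemma \ref{lem:current-formula} needs endpoints landing in $U$), so ``immediate'' hides a case---and your direct computation, using periodicity to kill the boundary term and Corollary \ref{cor:maximum} plus Lemma \ref{Boltzmann_lemma} to kill $\dot\varrho^{\text{B}}_T$, correctly shows that the limit is $0$ in that case.
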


\section{The Discriminant Theorem and Robust Parameters}
\label{sec:discriminant_theorem}
Set
$$
\breve {\cal D} := {\cal M}_{\Gamma} \setminus \breve {\cal M}_{\Gamma}.
$$
We will first show that the one-point compactification $\breve {\cal D}^+$ 
has the structure of a regular CW complex.
By definition, $\breve {\cal D}$ is the subspace of ${\cal M}_{\Gamma}$ consisting of
pairs $(E,W)$ such that $E\: \Gamma_0 \to \Bbb R$ has more than one absolute minimum and $W\: \Gamma_1 \to \Bbb R$ is not one-to-one. 

\begin{defn} A {\it height function} for $\Gamma$ is a pair of functions
$$
h_0:\Gamma_0\to \{1,2\} \, , \qquad h_1\: \Gamma_1 \to \{1,\dots,n\}\, ,
$$
where 
\begin{itemize}
\item $n > 0$ is an integer,
\item $h_0^{-1}(1)$ is non-empty, and
\item $h_1$ is surjective.
\end{itemize}
We write $h := (h_0,h_1)$.\end{defn}

Height functions arise in the following situation.

\begin{ex} \label{ex:height-function}
Given $(E,W) \in \breve {\cal D}$, 
we write 
$h_0(i) = 1$ if and only if $i$ is a minimum for $E$, and otherwise
we set $h_0(i) =2$. 
We define  $h_1\: \Gamma_1 \to \{1,\dots,n\}$ to be the unique surjective function
characterized by
\begin{itemize}
\item $h_1(\alpha_1) \le h_1(\alpha_2)$ if and only if $W_{\alpha_1} \le W_{\alpha_2}$,
and 
\item $h_1(\alpha_1) = h_1(\alpha_2)$ if and only if $W_{\alpha_1} = W_{\alpha_2}$.
\end{itemize}
The pair $h := (h_0,h_1)$ is then a height function for $\Gamma$.
\end{ex}

Given a height function $h = (h_0,h_1)$ we define
$$
C(h)
$$
to be the set of all $(E,W) \in \breve {\cal D}$  
whose associated  height function is $h$, as in Example \ref{ex:height-function}. 
Then $\breve{\cal D} = \coprod_{h} C(h)$
as sets. Note that $C(h)$ is non-empty if and only if $h_0^{-1}(1)$
has more than one element and $h^{-1}_1(k)$ has more than one element for
some $k$. Let $D(h)$ denote the closure of $C(h)$
in $\breve {\cal D}$. 

\begin{prop} Assume that $C(h)$ is non-empty. 
Then the one-point compactification $D(h)^+$ is homeomorphic to a disk of dimension $m+n$, where
$m =1+|\Gamma_0 \setminus h^{-1}_0(1)|$
and $n$ is as above. 
\end{prop}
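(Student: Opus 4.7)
The plan is to equip $C(h)$ with explicit Euclidean coordinates identifying it with a product $\Bbb R^2 \times (0,\infty)^{m+n-2}$, extend these to realize $D(h)$ as a closed corner, straighten the corner to a closed half-space, and then observe that the one-point compactification of a closed half-space is a closed disk. Concretely, a pair $(E,W) \in C(h)$ is uniquely determined by: a floor $e \in \Bbb R$ with $E_i = e$ for all $i \in h_0^{-1}(1)$; strict excesses $s_j = E_j - e > 0$ for each $j \in \Gamma_0 \setminus h_0^{-1}(1)$ (contributing $m-1$ positive parameters); a base $w_1 \in \Bbb R$ equal to the common value of $W$ on $h_1^{-1}(1)$; and positive gaps $t_k = w_{k+1}-w_k > 0$ for $k = 1,\dots,n-1$, where $W_\alpha = w_{h_1(\alpha)}$. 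These coordinates assemble into a homeomorphism $C(h) \cong \Bbb R^2 \times (0,\infty)^{m+n-2}$, of total dimension $m+n$ as claimed.

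To compute $D(h)$, I would use that the two conditions defining $\breve{\cal D}$, namely that $E$ has more than one absolute minimum and that $W$ is not one-to-one, are \emph{closed} conditions on ${\cal M}_\Gamma$: each is a union of non-strict equalities between coordinate functions. Hence letting any subset of the $s_j$ and $t_k$ tend to $0$ produces a limit that still lies in $\breve{\cal D}$ (with a more degenerate height function), while any limit along which some coordinate diverges exits ${\cal M}_\Gamma$ altogether. Conversely, every point of the closed corner is visibly a limit of interior points, obtained by perturbing the degenerate coordinates back into $(0,\infty)$. Combining these gives $D(h) \cong \Bbb R^2 \times [0,\infty)^{m+n-2}$.

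To finish, I would invoke two standard topological facts. First, for $d \geq 1$ the corner $[0,\infty)^d$ is homeomorphic to the closed half-space $\Bbb R^{d-1} \times [0,\infty)$; this reduces by induction on $d$ to the complex squaring identification of the closed first quadrant with the closed upper half-plane. Applying this with $d = m+n-2$ yields $D(h) \cong \Bbb R^{m+n-1} \times [0,\infty)$. Second, this closed half-space embeds in the ball $D^{m+n}$ as the complement of a single boundary point (via, e.g., the inverse of stereographic projection from a boundary point), so its Alexandroff one-point compactification is $D^{m+n}$. The main obstacle is the closure computation in the middle step: one must argue carefully that $D(h)$ equals the stated closed corner with no extraneous limit points, and this rests squarely on the closedness of the two defining conditions of $\breve{\cal D}$ together with the observation that the only way to leave $\breve{\cal D}$ from inside $C(h)$ is to let some coordinate escape to infinity.
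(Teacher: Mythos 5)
Your proposal is correct and follows essentially the same route as the paper: explicit affine coordinates realize $C(h)$ as $\Bbb R^2 \times (0,\infty)^{m+n-2}$ and $D(h)$ as the closed corner $\Bbb R^2 \times [0,\infty)^{m+n-2}$, after which the claim follows from straightening the corner and compactifying. The paper packages the identical calculation factorwise, writing $D(h) = D(h_0)\times D(h_1)$ with $D(h_0)^+ \cong D^m$, $D(h_1)^+ \cong D^n$, and invoking $(A\times B)^+ = A^+\wedge B^+$ together with $D^m\wedge D^n = D^{m+n}$, which is exactly your iterated corner-straightening in disguise.
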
 

\begin{proof} The space $C(h)$ coincides with the cartesian 
product $C_0(h) \times C_1(h)$,
where $C_0(h)$ consists of those $E:\Gamma_0 \to \Bbb R$ 
associated with $h_0$  
and $C(h_1)$ consists of $W:\Gamma_0 \to \Bbb R$ associated with $h_1$. Likewise $D(h)$ coincides with the cartesian
product $D(h_0) \times D(h_1)$.
 
Note that $D(h_0)$ is canonically homeomorphic to the space $P_m$ consisting of $m$-tuples or real numbers
$$
(x_1,x_2,\dots, x_m)
$$
such that $x_1 \le x_k$ for all $k >1$. The homeomorphism is given by mapping
$E$ to the map $\Gamma_0/{\sim} \to \Bbb R$, where
where
$\sim$ denotes the equivalence relation on $\Gamma_0$ defined by 
$i \sim j$ if and only of $h_0(i) = 1 = h_0(j)$. As $\Gamma_0/{\sim}$
has cardinality one more than the number of non-minima of $E$, 
such functions are identified with
$m$-tuples of real numbers. The operation  
$$
(z_1,\dots,z_m) \mapsto (z_1,z_2+z_1,\dots z_m+z_1)
$$
defines a homeomorphism to $P_m$ from the space of 
of $m$-tuples $(z_1,\dots,z_m)$ such that $z_1 \in \Bbb R$ and 
$z_k \ge 0$ for all $k > 1$. It is easy see that the one-point compactification
of the latter space is homeomorphic to $D^m$. Hence, $D(h_0)^+$ is
homeomorphic to $D^m$.

Similarly, the space $D(h_1)$ is canonically homeomorphic the space $Q_n$ 
consisting of $n$-tuples or real numbers
$$
(x_1,x_2,\dots, x_n)
$$
such that $x_1 \le x_2 \le \dots x_n$.
The operation
$$
(z_1,\dots,z_n) \mapsto (z_1,z_1+z_2,\dots, \sum_{i=1}^n z_i)
$$
defines a homeomorphism to $Q_n$ from the space of $n$-tuples 
$(z_1,\dots,z_n)$ for which $z_1 \in \Bbb R$ and $z_i \ge 0$ for $i >1$.
The one-point compactification of the latter space
is identified with $D^n$. Consequently, $D(h_1)^+$ is homeomorphic
to $D^n$.   Finally,
\begin{align*}
D(h)^+ &= (D(h_0) \times D(h_1))^+ \\
            &= D(h_0)^+ \smsh D(h_1)^+ \\
            &\cong D^m \smsh D^n \\
            &= D^{m+n}\, .
\end{align*}
\end{proof}

\begin{cor} $\breve {\cal D}^+$ has the structure of a regular CW complex of
dimension $d-2$, where $d$ is the cardinality of $\Gamma_0 \amalg \Gamma_1$.
\end{cor}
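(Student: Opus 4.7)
The plan is to take the cells of $\breve{\cal D}^+$ to be the point $\infty$ together with the strata $C(h)$, one for each height function $h = (h_0,h_1)$ with $C(h)$ nonempty. The proposition just proved supplies the characteristic maps: for each such $h$, the homeomorphism $D^{m+n} \cong D(h)^+$ composes with the evident inclusion to give a map $\phi_h \: D^{m+n} \to \breve{\cal D}^+$. Because $D(h)$ is closed in $\breve{\cal D}$, $D(h)^+$ is closed in $\breve{\cal D}^+$ and coincides with the closure of $C(h)$ there; since $\breve{\cal D}^+$ is a (compact) Hausdorff one-point compactification of a locally compact subset of ${\cal M}_\Gamma$, the continuous injection $\phi_h$ is automatically an embedding, which is the regularity condition of \cite[p.\ 534]{Hatcher}.

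First I would verify the dimension bound. Nonemptiness of $C(h)$ forces $|h_0^{-1}(1)| \ge 2$ and the existence of some fiber $h_1^{-1}(k)$ of cardinality at least two; together with surjectivity of $h_1$ onto $\{1,\ldots,n\}$, this gives
\[
m + n \,\le\, (|\Gamma_0|-1) + (|\Gamma_1|-1) \,=\, d-2,
\]
with equality exactly when two vertices minimize $E$ and a single pair of edges shares a barrier energy. This pins down the CW-dimension.

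Next I would establish closure-finiteness. A sequence in $C(h)$ can leave $C(h)$ inside $\breve{\cal D}$ only in one of two ways: either a non-minimum vertex joins the minimizing set of $E$, strictly enlarging $h_0^{-1}(1)$ and decreasing $m$ by at least one, or two previously distinct barrier classes of $W$ collide, decreasing $n$ by at least one. In either case the limit lies in some $C(h')$ whose height function still satisfies the nonemptiness constraints (being a minimum or a coincidence is preserved under these degenerations) and has $m'+n' < m+n$. Hence $D(h)\setminus C(h)$ is a disjoint union of strictly lower-dimensional open cells. The remaining points of the closure inside $\breve{\cal D}^+$ are collected into the single $0$-cell $\{\infty\}$, which is reached by letting one or more coordinates in the parametrizations $P_m$, $Q_n$ of the preceding proof diverge.

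The main obstacle is precisely this point at infinity, because one-point compactification collapses all divergent ends of all cells to the same $0$-cell, and one must confirm that no pair of distinct open cells overlap in $\breve{\cal D}^+$ and that the attaching data are globally consistent there. This is handled by observing that the $C(h)$ partition $\breve{\cal D}$ as a set, that each $\phi_h$ sends only the boundary sphere $S^{m+n-1}\subset D^{m+n}$ outside $C(h)$, and that every open cell of positive dimension has $\infty$ in its closure because the factors $P_m$ and $Q_n$ are unbounded. Finiteness of the set of height functions then makes the weak-topology axiom automatic and delivers the finite regular CW structure of dimension exactly $d-2$.
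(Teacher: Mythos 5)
Your proof is correct and follows exactly the route the paper intends: the Corollary is stated without proof as an immediate consequence of the preceding Proposition, and the Remark that follows confirms the intended cell structure (the open cells are $\{\infty\}$ together with the strata $C(h)$, with $C(h)$ the interior of the disk $D(h)^+$). Your write-up simply supplies the omitted details — the dimension count $m+n\le d-2$, closure-finiteness via degeneration of the height function, regularity from the compact-Hausdorff embedding criterion, and the identification of all unbounded ends with the single $0$-cell at $\infty$ — all of which are consistent with the paper.
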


\begin{rem} With the exception of the
point at $\infty$, the open cells of $\breve {\cal D}^+$ are given by the $C(h)$
where $h$ varies over the height functions for $\Gamma$. This is because
$C(h)$ is the interior of $D(h)^+$.

A top dimensional cell $C(h)$ of $\breve {\cal D}^+$ 
is given by a height function $h$ in which
\begin{itemize}
\item $h_0^{-1}(1)$ has precisely two elements;
\item there is a $k \le n$ such that $h_1^{-1}(k)$   
has precisely two elements, and if $i \ne k$ we have $h^{-1}_1(i)$ is a singleton
for $1\le i \le n$.
\end{itemize}
\end{rem} 
 
\subsection*{Robust parameters}
Alexander duality applied to the inclusion $\breve {\cal D}^+ \subset \cal M_{\Gamma}^+ = S^d$
yields an isomorphism 
$$
H^{d-2}(\breve {\cal D}^+ ) \cong H_1(\breve {\cal M}_{\Gamma}) \, .
$$
Let $C^{d-2}(\breve {\cal D}^+)$ be the cellular cochain complex of $\breve {\cal D}^+$
over the integers in degree $d-2$. This is the free abelian group with basis 
given by the set of $(d-2)$-cells of $\breve {\cal D}^+$. We consider the composition
\begin{equation} \label{dangerous}
C^{d-2}(\breve {\cal D}^+) \to H^{d-2}(\breve {\cal D}^+ )  \cong H_1(\breve {\cal M}_{\Gamma}) 
\overset {q_*}\to H_1(\Gamma;\Bbb R)
\end{equation}
where $q\: \breve {\cal M}_{\Gamma} \to |\Gamma|$ is the weak map defined in 
Eq.~\eqref{eq:good_weak_q} above.
This homomorphism is naturally identified with a $H_1(\Gamma;\Bbb R)$-valued chain
$$
\phi \in C_{d-2}(\breve {\cal D}^+;H_1(\Gamma;\Bbb R))\, ,
$$ 
and it is trivial to check that $\phi$ is a cycle. For $a\in C^{d-2}(\breve {\cal D}^+)$, let 
$\langle a ,\phi\rangle \in H_1(\Gamma;\Bbb R)$ denote the effect of applying the homomorphism
\eqref{dangerous} to $a$.

\begin{defn} A $(d-2)$-cell $C(h)$  of $\breve {\cal D}^+$ is said to be {\it essential} if 
$\langle C(h),\phi\rangle \in H_1(\Gamma;\Bbb R)$ is non-trivial, where
we consider $C(h)$ as an element of $C^{d-2}(\breve {\cal D}^+)$. 
A cell $C(h)$ of $\breve {\cal D}^+$ of any dimension
is {\it inessential} if it is not contained in the closure of an
essential $(d-2)$-cell.
\end{defn}

Define $\check {\cal D}$ to be the closure of the union of the essential $(d-2)$-cells
of $\breve{\cal D}$.

\begin{lem}  $\check {\cal D}^+$ is a subcomplex of $\breve{\cal D}^+$.
\end{lem}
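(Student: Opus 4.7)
The plan is to invoke regularity of the ambient CW complex $\breve{\cal D}^+$ (established in the Discriminant Theorem) and the standard fact that in a regular CW complex the closure of any open cell is a subcomplex. First I would recall that fact: for a finite regular CW complex $X$ and an open cell $e \subset X$, the characteristic map $D^n \to \bar e$ is a homeomorphism, and the restriction of the attaching map to $S^{n-1}$ has image in the $(n-1)$-skeleton, where by induction on dimension that image is already a union of cells; hence $\bar e$ itself is a union of cells, i.e.\ a subcomplex of $X$ (cf.\ \cite[p.\ 534]{Hatcher}). Apply this to every essential $(d-2)$-cell $C(h)$ to conclude that each closure $\overline{C(h)} \subset \breve{\cal D}^+$ is a subcomplex.

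Next I would identify $\check{\cal D}^+$ with the closure in $\breve{\cal D}^+$ of the union of the essential top-dimensional cells. By definition,
\[
\check{\cal D} \,=\, \overline{\textstyle\bigcup_{h\text{ essential}} C(h)}^{\,\breve{\cal D}}\, ,
\]
and since this closure is taken inside the closed subset $\breve{\cal D} \subset {\cal M}_\Gamma$, one-point compactifying gives
\[
\check{\cal D}^+ \,=\, \overline{\textstyle\bigcup_{h\text{ essential}} C(h)}^{\,\breve{\cal D}^+}
\,=\, \bigcup_{h\text{ essential}} \overline{C(h)}\, ,
\]
the last equality being valid because the union is finite (there are only finitely many cells). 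As a finite union of subcomplexes of $\breve{\cal D}^+$, the right-hand side is a subcomplex.

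The one subtle point, and the only place where something could fail, is the compatibility of the one-point compactification with closure: one must check that the point at infinity lies in $\overline{C(h)}$ for each essential $(d-2)$-cell, so that the closure taken in $\breve{\cal D}^+$ actually contains $\infty$ and hence matches $\check{\cal D}^+$ rather than just $\check{\cal D}$. This is where I expect the only real work: I would observe that each stratum $C(h) \subset {\cal M}_\Gamma \cong \Bbb R^d$ is invariant under the positive dilation action $(E,W)\mapsto (tE,tW)$ for $t>0$, since dilation preserves both the set of minima of $E$ and the ordering (and coincidences) of $W$. Taking $t \to \infty$ produces a sequence in $C(h)$ escaping every compact subset of ${\cal M}_\Gamma$, hence converging to $\infty$ in $\breve{\cal D}^+$. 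Thus $\infty \in \overline{C(h)}$, as required, and the identification above is justified.

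Combining these three steps completes the argument that $\check{\cal D}^+$ is a (closed) subcomplex of $\breve{\cal D}^+$, as claimed.
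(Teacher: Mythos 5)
Your proof is correct, but it takes a genuinely different route than the paper's. The paper verifies the frontier condition by hand: for a point $x$ in the boundary of a closed cell $D(h)$ lying in a unique open cell $C(h')$, the authors directly check the inclusion $D(h') \subset D(h)$, which immediately gives that closures of cells are unions of cells for this particular complex. You instead invoke the general theorem (Whitehead; Lundell--Weingram) that in any regular CW complex the closure of each open cell is a subcomplex, using the regularity of $\breve{\cal D}^+$ already asserted in the Discriminant Theorem's corollary, and then reduce the lemma to bookkeeping about finite unions and one-point compactifications. Your approach is more modular and shorter modulo the cited black box, whereas the paper's is more self-contained and makes the combinatorial structure of the boundary strata explicit, which it reuses implicitly elsewhere (e.g.\ the tree-inclusion argument in the proof of Proposition \ref{prop:inessential-cell}). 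One genuine thing your write-up adds is the explicit verification that $\infty \in \overline{C(h)}$ via scale-invariance of the strata $C(h)$ under $(E,W)\mapsto (tE,tW)$, a point the paper's proof is silent about (the phrase ``$x$ lies in a unique open $j$-cell $C(h')$'' quietly omits the $0$-cell at infinity). One small caveat: your inline ``by induction on dimension'' sketch of the regular CW fact is not actually a proof of it (the induction as stated does not immediately yield that the image of the attaching sphere is a union of cells); but since you are only recalling a standard theorem rather than reproving it, this does not affect the correctness of the argument.
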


\begin{proof} It is enough to show that  the union of any collection of
top dimensional closed cells of $\breve {\cal D}^+$ forms a subcomplex. 
Let $D(h)^+$ be any closed cell of $\breve{\cal D}$.
Suppose $x$ lies in the boundary of $D(h)$.
Then $x$ lies in a unique (open) $j$-cell $C(h')$.
It is straightforward to check  that $D(h')\subset D(h)$. Consequently,
the boundary of $D(h)^+$ is a union of lower dimensional cells, each of
these having boundary a union of lower dimensional cells and so on. In particular,
$\breve {\cal D}^+$ is a union of interiors of certain cells, and this union is closed.
Hence it is a subcomplex.
\end{proof}

\begin{defn}
Set
$$
\check {\cal M}_{\Gamma}  := {\cal M}_{\Gamma} \setminus \check {\cal D}\, .
$$
Then we have an inclusion $\breve {\cal M}_{\Gamma}  \subset \check {\cal M}_{\Gamma}$. We
call $\check {\cal M}_{\Gamma}$ the space of {\it robust parameters}.
\end{defn}

\begin{figure}
\includegraphics[scale=.5]{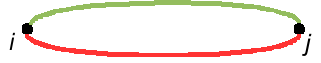}
\captionsetup{name=Fig.\!}
\caption{A two state system.}
\label{2-state}
\end{figure}

\begin{ex} \label{ex:2-state-graph} Let $\Gamma$ denote the graph displayed in Figure \ref{2-state}. In this case, the vector space of parameters $\cal M_{\Gamma}$ is identified with $\Bbb R^4$ and
the discriminant $\breve {\cal D}$ is identified with the diagonal 
inclusion $\Bbb R^2 \subset \Bbb R^4$
given by $(x,y) \mapsto (x,x,y,y)$. Taking the one-point compactification of
this inclusion, we obtain an inclusion $S^2 \subset S^4$ that is identified
with $\breve {\cal D}^+ \subset {\cal M}_{\Gamma}^+$. The complement of
this inclusion has the homotopy type of $S^1$. 
 Consequently, there is a homotopy equivalence
$$
\breve {\cal M}_{\Gamma} \, \simeq S^1 \, .
$$
In fact, one can make
this identification precise using the loop $\gamma$
given by the length one periodic driving protocol $\gamma(t) = (\cos 2\pi t,0,\sin 2\pi t,0)$
(one verifies this by showing that linking number of $\gamma$ with $S^2 \subset S^4$
is $\pm 1$).

Consequently,
the first homology group of $\breve {\cal M}_{\Gamma}$  is generated  by
the homology class $[\gamma]$.
The effect of ${\breve q}_*\: H_1(\breve {\cal M}_\Gamma) \to H_1(\Gamma)$ on this loop
is to produce a homology generator of $H_1(\Gamma;\Bbb R) \cong \Bbb Z$ (see \cite{CKS1} for details),
so the weak map ${\breve q}\: \breve {\cal M}_\Gamma \to |\Gamma|$ is a weak homotopy equivalence.
In particular, the unique $2$-cell of $\breve {\cal D}^+ \cong S^2$ is  essential, and we infer in this instance $\check {\cal M}_{\Gamma} = \breve {\cal M}_{\Gamma}$.
\end{ex} 

\begin{rem}\label{rem:non-triviality} With little difficulty,
Example \ref{ex:2-state-graph} can be generalized
to show that ${\breve q}_*\: H_1(\breve {\cal M}_\Gamma) \to H_1(\Gamma)$  is non-trivial whenever $\Gamma$
has non-trivial first Betti number. We omit the details.
\end{rem}

\begin{figure}
\includegraphics[scale=.5]{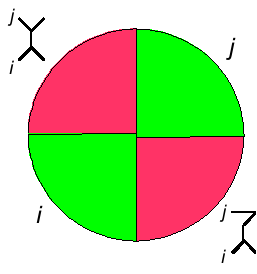}
\captionsetup{name=Fig.\!}
\caption{A two dimensional disk which meets a $(d-2)$-cell of $\breve {\cal D}$
transversely at its center. The green sectors are
contained in $U$ and the red sectors are contained in $V$. In this example,
the current is represented by a non-trivial cycle so the $(d-2)$-cell is essential.}
\label{sectors} 
\end{figure}

\subsection*{The weak map $\hat q$} 
Let $C(h)$ denote a $(d-2)$-cell of $\breve{\cal D}^+$, and let 
$x\in C(h)$ be its center. We choose a small closed $2$-disk meeting $C(h)$ normally
at $x$. The boundary of this disk represents a periodic driving protocol $\gamma$ in the space of good parameters. As an illustration, we have depicted such a disk in Figure \ref{sectors},
 associated with the graph $\Gamma = $ 
$  > \!\!\!{-}\!\!\lhd$.

If the disk is sufficiently small then it is partitioned into  four  regions, the interior of each either contained in  $U$ or $V$, which is shown in Figure \ref{sectors} as 
green and red respectively.\footnote{One may justify this as follows: at $x$ there is a unique
pair of vertices $i$ and $j$ and unique pair of edges $\alpha,\beta$ in which $E_i = E_j$ 
are minimizing and $W_{\alpha} = W_{\beta}$. A generic infinitesimal perturbation into $\breve {\cal M}_{\Gamma}$ of 
thse values will give one of the following inequalities: $E_i > E_j$, $E_{i} < E_j$,
$W_{\alpha} > W_{\beta}$ or $W_{\alpha} < W_{\beta}$. These inequalities correspond to the four sectors.}

Each green sector corresponds to a vertex $i$ or $j$ of $\Gamma$, 
each giving a minimum for  $\{E_k\}_{k \in \Gamma_0}$. Likewise, each
red sector corresponds to a preferred maximal spanning
tree, and these are indicated next to each such sector together with the 
location of the vertices $i,j$. The topological current associated with the driving protocol $\gamma$ is defined by the cycle 
obtained by joining together two paths connecting $i$ with $j$ (cf.\ \S8). Each path is defined
by moving along one of the spanning trees starting at $i$ and terminating at $j$
(for example, in Figure \ref{sectors} one obtains the cycle $\lhd$, which is nontrivial).
In particular, the cell $C(h)$ is inessential if and only if this cycle is trivial.
However, even more is true: since each path used to form the cycle is embedded, it is straightforward to
check that the loop in $\Gamma$ given by gluing these two paths together
 is null homotopic if and only if the two paths coincide. Consequently, if
$\sigma$ is inessential then the weak map $\breve q$ extends to the space given by 
attaching a two cell to $\breve {\cal M}_{\Gamma}$ along $\gamma$. 
If we repeat this construction for every inessential $(d-2)$-cell, we obtain a space
which is homotopy equivalent to the space of robust parameters $\check {\cal M}_{\Gamma}$.
We have therefore shown that the weak map $\breve q$ admits an extension to the space of robust parameters. We denote this extension by $\hat q$.

However, in order to prove the Pumping Quantization Theorem, it will more convenient to 
give a concrete extension of the weak map $\breve q$ to all of $\check {\cal M}_{\Gamma}$, rather than just a model for the extension up to homotopy. This construction will be described in the next section.

\begin{figure} 
\includegraphics[scale=.8]{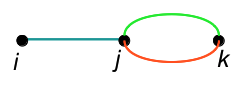}
\captionsetup{name=Fig.\!}
\caption{A graph with three vertices and three edges. }
\label{3-vertex}
\end{figure}

\begin{ex} In the  graph depicted by Figure \ref{3-vertex},
the edge containing vertices $i$ and $j$
is in every maximal spanning tree. 
Therefore, the discriminant $\breve {\cal D}$ for this graph 
has an inessential $4$-cell.
\end{ex}

\begin{defn} A {\it barrier resolution} of a height function $h = (h_0,h_1)$
is a bijection $r\:\Gamma_1\to  \{1,\dots,|\Gamma_1|\}$, where
$|\Gamma_1|$ is the cardinality of $\Gamma_1$, such that
$h_1(\alpha) < h_1(\beta)$ implies $r(\alpha) < r(\beta)$
for all $\alpha,\beta\in \Gamma_1$.
\end{defn}

A barrier resolution $r_h$ of $h$ enables one to associate a total ordering $\sigma_{r}$
of the set $\Gamma_1$. We saw in \S \ref{sec:good_parameters} how to obtain a spanning tree $T_{\sigma_r}$ for
$\Gamma$ associated with $\sigma_r$. Let 
\[
F_h = \bigcap_{r} T_{\sigma_r}\, ,
\] 
where the intersection is indexed over the set of barrier resolutions
$r$ of $h$. Then $F_h$ is a forest, i.e., a  (possibly empty) disjoint union of trees.

\begin{prop}\label{prop:inessential-cell}
A cell $C(h)$ is inessential if and only if all 
elements of $h_0^{-1}(1)$
belong to the same connected component of $F_h$. \end{prop}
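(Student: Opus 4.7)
The plan is to unwind the definition (a cell is inessential iff it lies in no closure of an essential $(d-2)$-cell) into a combinatorial condition on paths in spanning trees, and then translate that condition into connectivity inside $F_h$.

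First I would enumerate the $(d-2)$-cells $C(h')$ of $\breve{\cal D}^+$ with $C(h)\subset D(h')$. Such an $h'$ amounts to a ``minimal further refinement'' of $h$: it specifies a pair $\{i,j\}\subset h_0^{-1}(1)$ of competing minimizers, a pair $\{\alpha,\beta\}$ of tied edges lying in a common level set of $h_1$, and a total ordering of $\Gamma_1\setminus\{\alpha,\beta\}$ compatible with $h_1$. Equivalently, such an $h'$ is determined by $\{i,j\}\subset h_0^{-1}(1)$ together with an unordered pair of barrier resolutions $\{r,r'\}$ of $h$ that differ by an adjacent transposition within one $h_1$-level set.

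Next I would show that the current attached to $C(h')$, as described in the discussion preceding the proposition, is precisely the $1$-cycle $\pi^{T_{\sigma_r}}_{i\to j}-\pi^{T_{\sigma_{r'}}}_{i\to j}$, where $\pi^{T}_{i\to j}$ denotes the unique $i$-to-$j$ path in the tree $T$ viewed as a signed sum of edges. A short case analysis of the greedy algorithm applied to $\sigma_r$ and $\sigma_{r'}$ shows that $T_{\sigma_r}$ and $T_{\sigma_{r'}}$ are either identical or differ by exchanging $\alpha$ for $\beta$; in the latter case $T_{\sigma_r}\cap T_{\sigma_{r'}}$ is a spanning forest with exactly two components, and the two paths coincide iff $i,j$ lie in the same component. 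Hence $C(h')$ is inessential iff $\pi^{T_{\sigma_r}}_{i\to j}=\pi^{T_{\sigma_{r'}}}_{i\to j}$.

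Combining, $C(h)$ is inessential iff for every $\{i,j\}\subset h_0^{-1}(1)$ and every adjacent pair $r,r'$ the two paths agree. Since each symmetric group on an $h_1$-level set is generated by adjacent transpositions, the set of barrier resolutions of $h$ is connected under the elementary swap relation, so the condition is equivalent to $\pi^{T_{\sigma_r}}_{i\to j}$ being independent of $r$ for every such $\{i,j\}$. Finally, because $F_h=\bigcap_{r}T_{\sigma_r}$ and paths in trees are unique, this independence is equivalent to the existence of an $i$-to-$j$ path inside $F_h$; ranging over pairs yields the proposition. The main obstacle will be the greedy-algorithm case analysis in the third paragraph: one must verify carefully that an adjacent transposition within a level set either preserves the spanning tree or swaps a single edge, and then identify the resulting cycle in terms of the components of $T_{\sigma_r}\cap T_{\sigma_{r'}}$. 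Everything else is formal manipulation with preorders, generation of the symmetric group, and uniqueness of tree paths.
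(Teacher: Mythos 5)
Your argument is correct and follows essentially the same route as the paper's: reduce inessentiality of $C(h)$ to inessentiality of the top-dimensional cells $C(\hat h)$ adjacent to it, identify those with a choice of two tied minimizers $\{i,j\}$ together with a pair of barrier resolutions of $h$ differing by a single swap inside one $h_1$-level set, use the difference-of-tree-paths formula for the current of such a cell, and conclude via uniqueness of paths in trees that the condition is equivalent to $h_0^{-1}(1)$ lying in one component of $F_h$. You are somewhat more explicit than the paper in two places — the generation of the symmetric group by adjacent transpositions (which the paper leaves implicit in the word ``consequently'') and the greedy-algorithm case analysis showing $T_{\sigma_r}$ and $T_{\sigma_{r'}}$ are equal or differ by a single edge exchange — but the structure and content of the proof are the same.
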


\begin{proof}
Consider a cell $C(h)$ that satisfies the condition that all $i \in h_0^{-1}(1)$ belong to the same connected component $T_h$ of the forest associated with $C(h)$. 
 Then $T_{h}$ is a tree. Consider an arbitrary top dimensional cell $C(\hat{h})$, so that $C(h)\subset D(\hat{h})$. The cell $C(\hat{h})$ is uniquely identified by two distinct vertices $i,j\in h_{0}^{-1}(1)$, and two distinct edges $\alpha$ and $\beta$ with $h_{1}(\alpha)=h_{1}(\beta)$, together with a height function $h_{1}$ of height $|\Gamma_1|-1$ with $\hat{h}_{1}(\alpha)=\hat{h}_{1}(\beta)$, i.e., $W_{\alpha}=W_{\beta}$ is the only edge degeneracy in $C(\hat{h})$. Then we have $\hat{h}_{0}(k)=1$ for $k=i,j$ and $\hat{h}_{0}(k)=2$, otherwise. Consider a point $(E,W)\in C(\hat{h})$, and let $\gamma_{\hat{h}}$ be a small loop that goes around $({E},{W})$ without intersecting $C(\hat{h})$, i.e., staying within the good parameter space $\breve{{\cal M}}_{\Gamma}$. Using the notation of Eq.~\eqref{eq:expression_for_current}, we have for the topological current
\begin{eqnarray}
\label{current-cell-1} {Q}(\gamma_{\hat{h}})={Q}_{i}^{T_{\tilde{h}(\alpha)},j} -{Q}_{i}^{T_{\tilde{h}(\beta)},j} \, ,
\end{eqnarray}
where $\tilde{h}_{\alpha}$ and $\tilde{h}_{\beta}$ are the two possible barrier resolutions of $\hat{h}$ with $\tilde{h}_{\alpha}> \tilde{h}_{\beta}$ and $\tilde{h}_{\alpha}< \tilde{h}_{\beta}$, respectively. Then $\tilde{h}_{\alpha}$ and $\tilde{h}_{\beta}$ are also barrier resolutions of $h$, and, therefore, $T_{h}\subset T_{\tilde{h}_{\alpha}}$ and $T_{h}\subset T_{\tilde{h}_{\beta}}$, which implies ${Q}_{i}^{T_{\tilde{h}(\alpha)},j} ={Q}_{i}^{T_{h},j} ={Q}_{i}^{T_{\tilde{h}(\beta)},j}$. Therefore, ${Q}(\gamma_{\hat{h}})=0$, due to Eq.~(\ref{current-cell-1}), so that $C(h)$ is inessential.

To prove the converse,  let $C(h)$ be any inessential cell. For an arbitrary pair of edges $\alpha$ and $\beta$ let $\hat{h}_{1}\: \Gamma_1 \to \{1,\dots, |\Gamma_1| -1\}$ 
be any surjection such that $\hat{h}_{1}(\alpha)=\hat{h}_{1}(\beta)$. Then there are two possible barrier resolutions $\tilde{h}_{\alpha}$ and $\tilde{h}_{\beta}$ of $\hat{h}$, as described above. For arbitrary distinct vertices $i$ and $j$ let $\hat{h}_0\: \Gamma_0 
\to \{1,2\}$ be the function defined by
\[
\hat{h}_{0}(k) = \left\{ \begin{array}{rl}
 1 &\mbox{ if } k = i \mbox{ or } j, \\
 2 &\mbox{ otherwise.}
       \end{array} \right.
\]
Then $\hat h = (\hat{h}_0,\hat{h}_1$) is a height function, and $C(\hat{h})$ is top dimensional.
If $C(\hat{h})$ is inessential then ${Q}(\gamma_{\hat{h}})=0$ and Eq.~(\ref{current-cell-1}) implies that the minimal paths that connect $i$ to $j$ along the spanning trees $T_{\tilde{h}_{\alpha}}$ and $T_{\tilde{h}_{\beta}}$ are identical. Furthermore, if 
$D(\hat{h})\supset C(h)$, then $C(\hat{h})$ is inessential.
Consequently, the set consisting of the minimal paths that connect $i$ to $j$ inside the spanning trees $T_{\tilde{h}}$ associated with all barrier resolutions $\tilde{h}$ of $h$ consists of a single element. Denote this unique path by $l_{h,ij}$.  Then for any two distinct vertices $i,j\in h_{0}^{-1}(1)$ the path $l_{h,ij}$ belongs to all spanning trees $T_{\tilde{h}}$, and therefore the forest associated with $C(h)$. We infer that all vertices that belong to $h_{0}^{-1}(1)$ belong to the same connected component of the forest. 
\end{proof}

\section{The weak map $\check q$}
\label{sec:weak_map}
Proposition~\ref{prop:inessential-cell} shows that to any inessential cell $C(h)$ one can assign, in a preferred way, a tree that contains $h_{0}^{-1}(1)$. This tree will be denoted $T_{C(h)}$ and referred to as the tree associated with the inessential cell $C(h)$. We now cover each open cell $C(h)$ with an open set $Y_{h}\supset C(h)$ that consists of all $({E},{W})\in {\cal M}_{\Gamma}$ that is characterized by the property that if $h_{0}(i)< h_{0}(j)$ and $h_{1}(\alpha)< h_{1}(\beta)$, then $E_{i}< E_{j}$ and $W_{\alpha}< W_{\beta}$, respectively, for all $i,j\in \Gamma_{0}$ and $\alpha,\beta\in \Gamma_{1}$. Setting $Y=\bigcup_{h}Y_{h}$, where the union goes over all inessential cells we obtain an open cover
\begin{eqnarray}
\label{open-cover-robust} \check{{\cal M}}_{\Gamma}=U\cup V \cup Y
\end{eqnarray}
of the space of robust parameters.

For a given height function $h=(h_{0},h_{1})$ whose cell $C(h)$ is inessential, we let 
\[N_{h}\subset \check{{\cal M}}_{\Gamma}\times |\Gamma|
\] 
be the subspace given by  $Y_{h}\times |T_{C(h)}|(1/3)$, where
$|T_{C(h)}|(1/3)$ is the open regular neighborhood of $|T_{C(h)}|$ given by adjoining half open subintervals of length $1/3$ that correspond to those edges not in
$T_{C(h)}$ but which contain a vertex of it.

Then the projection $N_{h}\to Y_{h}$ is a homotopy equivalence. We further define $N_{Y}=\bigcup_{h}{N}_{h}$, and if we set
\begin{eqnarray}
\label{define-check-N} \check{N}=N_{U}\cup N_{V} \cup N_{Y}\, \subset 
\check{{\cal M}}_{\Gamma}\times |\Gamma|\, ,
\end{eqnarray}
then we have a diagram
\begin{eqnarray}
\label{weak-map-extend} 
\check {{\cal M}}_{\Gamma}@< {p_1} << \check{N} @> {p_2} >> |\Gamma|
\end{eqnarray}
whose arrows are given by the first and second factor projections respectively.
A straightforward application of the gluing lemma which we omit shows that the  projection map
$p_1$ is a homotopy equivalence. We infer that the Eq.~\eqref{weak-map-extend} describes a
weak map which we sometimes write as  
\begin{eqnarray}
\label{notation-weak-map-extend} 
\check{q}\: \check {{\cal M}}_{\Gamma} \to |\Gamma|\, .
\end{eqnarray}
By construction, the restriction of $\check q$ to $\breve {{\cal M}}_{\Gamma} $
coincides with $\breve q$.

For the sake of completeness we now
sketch a proof that $\check q$ coincides up to homotopy with 
the  extension $\hat q$ of $\breve q$
that was described in the previous section.

\begin{lem}  The homotopy class of the weak map $\check q$  
coincides with the homotopy class of the weak map $\hat q$ given by 
gluing in 2-cells.
\end{lem}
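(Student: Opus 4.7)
The plan is to show the two weak maps coincide up to homotopy by working cell-by-cell over the inessential stratum, using the fact that $|\Gamma|$ is aspherical (a one-dimensional CW complex has $\pi_k(|\Gamma|)=0$ for $k\geq 2$).

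First, I would note that both $\check q$ and $\hat q$ restrict to $\breve q$ on $\breve{\cal M}_{\Gamma}$ by construction. In the case of $\check q$ this is the last sentence of the construction in Section \ref{sec:weak_map}; in the case of $\hat q$ it is immediate from the description via attaching $2$-cells. So there is a constant homotopy between $\check q$ and $\hat q$ on $\breve{\cal M}_{\Gamma}$, and the problem reduces to extending this homotopy over the inessential cells of $\check{\cal D}$.

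Second, I would handle the top-dimensional inessential $(d-2)$-cells $C(h)$ first. Pick an interior point $x\in C(h)$ and a small transverse $2$-disk $D$ meeting $C(h)$ normally at $x$. The boundary $\partial D$ is a loop $\gamma_h$ in $\breve{\cal M}_{\Gamma}$ on which both maps equal $\breve q$ and therefore agree. Over $D$ itself, the map $\hat q$ is defined by some null-homotopy of the image loop $(\breve q)_{*}(\gamma_h)$ in $|\Gamma|$, which exists because $C(h)$ is inessential. The map $\check q$ restricted to the neighborhood $Y_h$ factors through the second-factor projection to the tree $|T_{C(h)}|(1/3)$, which deformation retracts onto the tree $|T_{C(h)}|$; since a tree is contractible, this supplies a specific null-homotopy of $(\breve q)_{*}(\gamma_h)$ inside $|T_{C(h)}|\subset |\Gamma|$. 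Because $\pi_2(|\Gamma|)=0$, any two null-homotopies of the same loop in $|\Gamma|$ are themselves homotopic rel endpoints, so $\hat q|_D$ and $\check q|_D$ are homotopic rel $\partial D$.

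Third, I would extend the homotopy over the lower-dimensional inessential cells by downward induction on dimension. At each step the attaching map of a neighborhood of a cell $C(h')$ of codimension $k\geq 3$ in $\check{\cal M}_{\Gamma}$ is a map of a sphere of dimension $k-1\geq 2$; the obstruction to extending a partially defined homotopy across such a cell lives in $\pi_{k-1}(|\Gamma|)$, which vanishes. So no obstruction arises, and the homotopy extends across all inessential cells. Finally, one patches the local homotopies into a global one using the same gluing-lemma argument that was used to establish $p_1\:\check N\to\check{\cal M}_{\Gamma}$ is a homotopy equivalence, applied to the open cover $\check{\cal M}_{\Gamma}=U\cup V\cup Y$.

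The main obstacle I anticipate is the bookkeeping required to formalize the obstruction argument in the weak-map framework: the ambient space $\check{\cal M}_{\Gamma}$ has a stratification by cells of $\check{\cal D}$ rather than an honest CW structure with cells of $\check{\cal M}_{\Gamma}$, and both $\check q$ and $\hat q$ are defined only as weak maps (via intermediate homotopy equivalences $p_1$ and the $2$-cell attachment). To avoid this, I would reformulate the comparison at the level of classifying maps into $|\Gamma|\simeq K(\pi_1(\Gamma),1)$: since both weak maps represent homotopy classes determined entirely by their effect on the fundamental groupoid, and by construction they induce identical maps on $\pi_1$ of each component of $\breve{\cal M}_{\Gamma}$ (they agree pointwise there), the asphericity of $|\Gamma|$ forces them to be homotopic as weak maps.
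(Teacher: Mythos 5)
Your proposal is correct in outline but follows a genuinely different route from the paper. The paper's proof proceeds by explicit construction: it builds a commutative diagram linking the two weak maps, with $\hat{\cal M}_{\Gamma} \to \check{\cal M}_{\Gamma}$ defined by sending each abstractly attached $2$-cell homeomorphically onto a small transversal disk to the corresponding inessential cell $C(h)$, and likewise $\hat N \to \check N$; the vertical maps are then verified to be homotopy equivalences. Your approach instead observes that $|\Gamma|$ is aspherical and reduces everything to a statement about fundamental groups. What you get is a shorter conceptual argument that avoids the somewhat delicate verification that the explicit maps are homotopy equivalences; what you lose is the concrete model for the comparison that the paper's diagram provides (which can be useful elsewhere).

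You are right to distrust your initial cell-by-cell extension: the inessential cells $C(h)$ stratify $\check{\cal M}_{\Gamma} \setminus \breve{\cal M}_{\Gamma}$ but do not give a CW structure on $\check{\cal M}_{\Gamma}$ with attaching maps to feed into obstruction theory, so the "obstruction lies in $\pi_{k-1}(|\Gamma|)$" step does not literally parse. The asphericity reformulation is the right fix, but as written it has a small unacknowledged gap. Agreement of two maps on a subspace does not by itself force them to agree on $\pi_1$ of the whole space, even with an aspherical target; you also need the inclusion $\breve{\cal M}_{\Gamma} \hookrightarrow \check{\cal M}_{\Gamma}$ to be surjective on $\pi_1$. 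That surjectivity does hold here, and for the very reason your cell-by-cell instincts were pointing at: by the Discriminant Theorem the inessential strata have codimension at least $2$ in ${\cal M}_{\Gamma}$, so any loop in $\check{\cal M}_{\Gamma}$ can be perturbed off them into $\breve{\cal M}_{\Gamma}$. With that sentence added, the argument is complete: both weak maps restrict to $\breve q$ over $\breve{\cal M}_{\Gamma}$, hence induce the same homomorphism on a generating set of $\pi_1(\check{\cal M}_{\Gamma})$, hence the same homomorphism $\pi_1(\check{\cal M}_{\Gamma}) \to \pi_1(|\Gamma|)$ up to conjugacy, and asphericity of $|\Gamma|$ (a graph has $\pi_k = 0$ for $k \ge 2$) then identifies their homotopy classes as weak maps. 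You should also note, for the record, that this is where the well-definedness of $\hat q$ itself is being used: the extension over an attached $2$-cell is a null-homotopy of $\breve q_*(\gamma_h)$, unique up to homotopy precisely because $\pi_2(|\Gamma|)=0$.
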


\begin{proof} The result will follow from the existence of a commutative diagram
\[
\xymatrix{
\hat {\cal M}_{\Gamma}\ar[dd] & \hat N \ar[l]_{p_1}\ar[dd] \ar[rd]^{p_2} \\
&& |\Gamma|\\
\check {\cal M}_{\Gamma} & \check N \ar[l]^{p_1}\ar[ur]_{p_2} \ar[ru]
}
\]
in which the vertical maps are homotopy equivalences and the left horizontal
maps (denoted $p_1$ in each case) are also homotopy equivalences.  The bottom maps
labelled $p_1$ and $p_2$ comprise the weak map $\check q$. The top maps labelled
$p_1$ and $p_2$ define the extension $\hat q$.
The space $\hat {\cal M}_{\Gamma}$ is obtained by attaching suitable two cells to
$\breve {\cal M}_{\Gamma}$; it comes equipped with a decomposition
\[
\hat {\cal M}_{\Gamma} = U \cup V \cup \hat Y
\]
where $\hat Y$ consists of the set of inessential $2$-cells, each which is labelled as 
$\hat D_h$, where $h$ ranges over height functions that define a top dimensional inessential cell. Similarly, we define
$\hat N \subset \hat {\cal M}_{\Gamma} \times |\Gamma|$ to be 
the union of $N_h = \hat Y_h \times |T_{C(h)}|(1/3)$. Then the projections
onto each factor explicitly define the extension of $\breve q$ described in \S\ref{sec:discriminant_theorem}.

The vertical maps in the diagram are given as follows. The homotopy equivalence
$\hat {\cal M}_{\Gamma} \to \check {\cal M}_{\Gamma}$ is given by the identity on 
$\breve {\cal M}_{\Gamma}$ and by   mapping
each closed 2-cell $\check D_h$ that is attached to $\breve {\cal M}_{\Gamma}$
homeomorphically to a small closed 2-cell $\check D_h$ that intersects $C(h)$ 
transversely at its center
(the boundary of $\check D_h$ is prescribed to having linking number +1 with $C(h)$).
A similar argument which we omit defines the homotopy equivalence 
$\hat N \to \check N$.
\end{proof}

\begin{rem}\label{rem:tree_notation} For the proof of the Pumping Quantization Theorem,
it will be convenient to put the open sets $U_{j}$, $V_{\sigma}$, and $Y_{h}$ on equal notational footing. This can be done by changing the notation to 
\[V_{\sigma}: =Y_{h}\, ,\]
where $h = (h_0,h_1)$ is
such that $h_{0}(k)=2$ for all $k\in \Gamma_{0}$, and $h_{1}:\Gamma_{1}\to \{1,\ldots,|\Gamma_{1}|\}$ is given by $h_1$ 
is defined in the obvious sense by the total ordering $\sigma$.
Similarly, we set \[U_{j}:=Y_{h}\, ,\] where in this instance $h = (h_0,h_1)$
is given by $h_{0}(k)=1$ for $k=j$ and $h_{0}(k)=2$ otherwise, and $h_{1}$ is the function with constant value $1$. Note that these notational changes necessitate a more flexible notion of height function, which we will call an {\it extended height function}.

Note that the tree $T_{C(h)}$ that corresponds to $V_{\sigma}$ is just the $\sigma$-spanning tree $T_{\sigma}$, whereas $T_{C(h)}$ that corresponds to $U_{j}$ consists of the single vertex $j$ and no edges. With the above extension Eq.~(\ref{define-check-N}) reads 
$\check{N}=N_{Y}$ with $N_{Y}=\bigcup_{h}\check{N}_{h}$, where the union indexed over the  set of extended height functions. 
\end{rem}

\section{The Representability Theorem}
\label{sec:representability}
\begin{proof}[Proof of Theorem \ref{thm:representability}]
It is clear that
$
L\check {\cal M}_{\Gamma} \subset \check L{\cal M}_{\Gamma}
$,
so it suffices to prove the reverse inclusion. 
The proof will be by contradiction. Let $\gamma \in \check L{\cal M}_{\Gamma}$.
The idea is to modify $\gamma$ along a small arc in such a way that the value of the current changes.

 Suppose there is an $s\in [0,1]$ such that $y = \gamma(s) \in \check{\cal D}$.
 Then $y$ is in the closure of an essential cell. Let $\epsilon >0$ be small.
  Choose a point $x$ in the interior of this cell such that $|y-x|\le \epsilon$ with respect
  to a choice of norm on ${\cal M}_\Gamma$.
Let $V$ be an open neighborhood of $\gamma$ in $L{\cal M}_{\Gamma}$
on which $Q$ is well-defined and constant.

If  $\epsilon $ is sufficiently small, 
we can construct a smooth loop $\omega \in V$ which coincides with $\gamma$ off of
$(s-\epsilon,s+\epsilon)$, and inside this neighborhood
$\omega$ winds once around a small disk $D$ meeting the essential cell 
transversely at $x$ in such a way that $D\setminus x\subset \check {\cal M}_{\Gamma}$ and 
$Q(\partial D)$ is nontrivial. 
Then topologically, $\omega$ is a loop obtained by concatenating $\gamma$ with $\partial D$.
As the current $Q$ is additive, 
we find that $Q(\omega) = Q(\gamma) + Q(\partial D)$. Consequently, $Q(\omega)\neq Q(\gamma)$. 
This contradicts the assumption that $Q$ is  constant
on $V$. \end{proof}

\section{The Pumping Quantization and Realization  Theorems}
\label{sec:strongPQT}
Consider a closed arc $I=[a,b]\subset C$ of our unit length circle $C$ such that $\gamma(I)\subset Y_{h}$ for some $h$. Obviously, for $c=a,b$ there is a well-defined low-temperature limit $\rho^{(c)}=\lim_{\beta\to\infty}\rho^{B}(\gamma(c);\beta)$, represented by a normalized constant function on its support ${\rm supp}(\rho^{(c)})\subset h_{0}^{-1}(1)$. We further simplify the notation by using $T_{h}=T_{C(h)}$ and chose some arbitrary base vertex $i(h)$ in the tree $T_{h}$ for each relevant height function $h$. 
\begin{lem}
\label{lemma-limit-robust} The contribution along $I$ to the current $Q_\beta(\gamma)$ in the low-temperature limit is given by
\begin{eqnarray}
\label{limit-arc-contr} \lim_{\beta\to\infty}\int_{I}{J}ds=\sum_{j\in h_{0}^{-1}(1)}{Q}_{i(h)}^{T_{h},j}\left(\rho_{j}^{(b)}-\rho_{j}^{(a)}\right).
\end{eqnarray}
\end{lem}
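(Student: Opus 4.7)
The plan is to adapt the argument of Lemma~\ref{lem:current-formula}, starting from the explicit expression for the instantaneous current given in Remark~\ref{sum-formula-for-A}:
\[
\mathbf{J}(t)\,=\,\sum_{T,j}\, Q_{i(h)}^{T,j}\,\varrho_T^{\text{B}}(\gamma(t))\,\dot\rho_j^{\text{B}}(\gamma(t))\, .
\]
I would split the sum over spanning trees into the pieces $T\supset T_h$ and $T\not\supset T_h$. Since $T_h = F_h$ is by construction the intersection of the $\sigma_r$-spanning trees over all barrier resolutions $r$ of $h$, every minimizer of the tree-energy $\sum_{\alpha\in T_1}W_\alpha$ at each point of $\gamma(I)\subset Y_h$ contains $T_h$. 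Compactness of $\gamma(I)$ then produces a uniform spectral gap, i.e.\ constants $c,C>0$ such that $\varrho_T^{\text{B}}\le Ce^{-c\beta}$ on $I$ whenever $T\not\supset T_h$, together with $|\dot\varrho_T^{\text{B}}|\le C'\beta e^{-c\beta}$ by direct calculation. Integration by parts in each $\int_I\varrho_T^{\text{B}}\dot\rho_j^{\text{B}}\,dt$, combined with $\rho_j^{\text{B}}\in[0,1]$, then shows that the entire $T\not\supset T_h$ contribution vanishes in the low-temperature limit.

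For the remaining piece I would invoke Proposition~\ref{prop:inessential-cell}: the set $h_0^{-1}(1)$ lies entirely inside the connected tree $T_h$, so for every $T\supset T_h$ and every $j\in h_0^{-1}(1)$ the unique path in $T$ from $i(h)$ to $j$ is already a path in $T_h$. Hence $Q_{i(h)}^{T,j}=Q_{i(h)}^{T_h,j}$ independently of $T$, and this integer $1$-chain factors out of the tree sum to give
\[
\sum_{T\supset T_h} Q_{i(h)}^{T,j}\,\varrho_T^{\text{B}}\,=\,Q_{i(h)}^{T_h,j}\cdot S\, ,\qquad S\,:=\,\sum_{T\supset T_h}\varrho_T^{\text{B}}\, .
\]
Differentiating the identity $\sum_T\varrho_T^{\text{B}}=1$ yields $\dot S=-\sum_{T\not\supset T_h}\dot\varrho_T^{\text{B}}$, so the bounds from the previous paragraph force $1-S=O(e^{-c\beta})$ and $\dot S=O(\beta e^{-c\beta})$ uniformly on $I$. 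Integration by parts then produces
\[
\int_I S\,\dot\rho_j^{\text{B}}\,dt\,=\,\bigl[S\rho_j^{\text{B}}\bigr]_a^b\,-\,\int_I\dot S\,\rho_j^{\text{B}}\,dt\,\longrightarrow\,\rho_j^{(b)}-\rho_j^{(a)}
\]
as $\beta\to\infty$, for each $j\in h_0^{-1}(1)$.

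The remaining vertices $j$ with $h_0(j)=2$ satisfy $E_j>E_i$ uniformly on $\gamma(I)$ for every $i\in h_0^{-1}(1)$, so $\rho_j^{\text{B}}=O(e^{-c''\beta})$ uniformly, and the same integration-by-parts argument forces $\int_I\varrho_T^{\text{B}}\dot\rho_j^{\text{B}}\,dt\to 0$ for every $T$. Assembling the three cases recovers exactly the right-hand side of~\eqref{limit-arc-contr}.

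The main obstacle is that within $Y_h$ the tree-Boltzmann distribution is genuinely degenerate---there may be several spanning trees minimizing $\sum_{\alpha\in T_1}W_\alpha$, and the specific minimizer can change as $W$ varies within $Y_h$---so Lemma~\ref{Boltzmann_lemma} cannot be applied directly to any individual $\varrho_T^{\text{B}}$, and its time derivative need not vanish in the low-temperature limit. The resolution is that the entire degeneracy is captured by $T_h=F_h$: aggregating the Boltzmann weight over all $T\supset T_h$ produces a quantity $S$ that behaves asymptotically as if the minimum were unique, because the fluctuations of $S$ are controlled by $\dot S=-\sum_{T\not\supset T_h}\dot\varrho_T^{\text{B}}$, a sum living entirely in the spectrally gapped sector.
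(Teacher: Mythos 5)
Your argument is correct and takes essentially the same route as the paper's: split $\int_I \mathbf{J}\,ds$ over spanning trees by whether $T\supset T_h$, factor out $Q_{i(h)}^{T,j}=Q_{i(h)}^{T_h,j}$ for $T\supset T_h$ and $j\in(T_h)_0$, and integrate by parts so that the uniform low-temperature vanishing of $\varrho_T^{\text{B}},\dot\varrho_T^{\text{B}}$ (for $T\not\supset T_h$) and of $\rho_j^{\text{B}},\dot\rho_j^{\text{B}}$ (for $h_0(j)=2$) leaves only the stated boundary contribution. One small slip---$T_h$ is the connected component of the forest $F_h$ containing $h_0^{-1}(1)$, not $F_h$ itself---is harmless, since every tree-energy minimizer on $\gamma(I)\subset Y_h$ contains $F_h$ and hence $T_h$.
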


\begin{proof}
Using the explicit expression for the current based on the Kirchhoff theorem 
given Eq.~\eqref{eq:J-formula}
one can show
\begin{eqnarray}
\label{arc-contr-transformed} \int_{I}{J}ds &=&\sum_{j\in h_{0}^{-1}(1)}{Q}_{i(h)}^{T_{h},j}\rho_{j}^{B}\sum_{T\supset T_{h}}\varrho_{T}^{B}|_{a}^{b} \nonumber \\ &-&\sum_{j\in h_{0}^{-1}(1)}{Q}_{i(h)}^{T_{h},j}\int_{I}ds\rho_{j}^{B}\frac{d}{ds}\sum_{T\supset T_{h}}\varrho_{T}^{B} \nonumber \\ &+&\sum_{(j,T)\in {\cal K}_{h}}{Q}_{i(h)}^{T,j}\rho_{j}^{B}\varrho_{T}^{B}|_{a}^{b} \nonumber \\ &-&\sum_{(j,T)\in {\cal K}_{h}}{Q}_{i(h)}^{T,j}\int_{I}ds\rho_{j}^{B}\dot{\varrho}_{T}^{B} \, ,
\end{eqnarray}
where ${\cal K}_{h}=\left\{(j,T):h_{0}(j)=2 \; {\rm or} \; T_{h}\nsubseteq T\right\}$. To derive Eq.~\eqref{arc-contr-transformed} we first apply integration by parts to the 
explicit expression for the current, followed by representing the sum over the graph vertices $j$ and spanning trees $T$ as a sum over $(j,T)$ with $h_{0}(j)=1$ and $T\supset T_{h}$, and the remaining terms. We also make use of the fact that provided $j\in h_{0}^{-1}(1)$, which implies $j\in \left(T_{h}\right)_{0}$, and $T_{h}\subset T$, we have $Q_{i(h)}^{T,j}=Q_{i(h)}^{T_{h},j}$, i.e., the contribution to the current does not depend on the spanning tree $T$. Then Eq.~\eqref{limit-arc-contr} follows from the following properties that hold inside $Y_{h}$, and are verified directly. For $j\in h_{0}^{-1}(2)$ we have
\begin{eqnarray}
\label{Boltzmann-limit-vertices} \lim_{\beta\to \infty}\rho_{j}^{B}=0, \;\;\; \lim_{\beta\to \infty}d\rho_{j}^{B}=0
\end{eqnarray}
and for $T\nsupseteq T_{h}$ we have
\begin{eqnarray}
\label{Boltzmann-limit-trees} \lim_{\beta\to \infty}\varrho_{T}^{B}=0, \;\;\; \lim_{\beta\to \infty}d\varrho_{T}^{B}=0\, .
\end{eqnarray}
Since $\sum_{T}\varrho_{T}=1$, the properties given by Eq.~\eqref{Boltzmann-limit-trees} also imply
\begin{eqnarray}
\label{Boltzmann-limit-trees-2} \lim_{\beta\to \infty}\sum_{T\supset T_{h}}\varrho_{T}^{B}=1, \;\;\; \lim_{\beta\to \infty}d\sum_{T\supset T_{h}}\varrho_{T}^{B}=0
\end{eqnarray}
Eq.~(\ref{limit-arc-contr}) is obtained by applying the properties given by Eqs.~\eqref{Boltzmann-limit-vertices}, \eqref{Boltzmann-limit-trees}, and \eqref{Boltzmann-limit-trees-2} to the integral expression of Eq.~\eqref{arc-contr-transformed}.
\end{proof}

\begin{defn} \label{defn:check-Q} Let $\check Q$ be the locally constant function given
 by the composite
\[ 
L \check {\cal M}_{\Gamma} @>>>
 H_1(\check {\cal M}_{\Gamma};\Bbb Z) @> {\check q}_* >>
H_1(\Gamma;\Bbb Z) 
 \]
 in which the first map is defined by sending a free loop  to its
integer homology class.
\end{defn}

\begin{proof}[Proof of Theorems \ref{thm:strongPQT} and \ref{thm:realization}]
Let $I_{1},\ldots,I_{k}$ be a simplicial decomposition of $S^{1}$ into closed arcs, with $1,\ldots,k\in\mathbb{Z}/k$, and $I_{m}=[a_{m-1},a_{m}]$, so that $\gamma(I_{m})\subset Y_{h^{m}}$ for some set $h^{1},\ldots h^{k}$ of (extended) height functions. Applying Lemma~\ref{lemma-limit-robust}, and more specifically Eq.~(\ref{limit-arc-contr}), followed by re-grouping the terms in the sum over the arcs we obtain
\begin{eqnarray}
\label{limit-PQT-robust-2} \lim_{\beta\to\infty}{Q}_{\beta}(\gamma)=\sum_{m=1}^{k}\sum_{j}
\left({Q}_{i(h^{m})}^{T_{h^{m}},j}-{Q}_{i(h^{m+1})}^{T_{h^{m+1}},j}\right)\rho_{j}^{(a_{m})}\, .
\end{eqnarray}
The expression in the parenthesis on the right side 
of Eq.~\eqref{limit-PQT-robust-2} does not depend on $j$. Indeed,
this assertion needs only to be checked for for another vertex $j'$ which lies in 
$T_{h_m} \cap T_{h_{m+1}}$. In this instance the unique path running from $j$ to 
$j'$ which is contained in $T_{h_m} \cap T_{h_{m+1}}$
determines a one-chain $c$ such that ${Q}_{i(h^{m})}^{T_{h^{m}},j'} = 
{Q}_{i(h^{m})}^{T_{h^{m}},j} + c$ and likewise ${Q}_{i(h^{m+1})}^{T_{h^{m+1}},j'}
= {Q}_{i(h^{m+1})}^{T_{h^{m+1}},j} +c$. Hence, 
${Q}_{i(h^{m})}^{T_{h^{m}},j}-{Q}_{i(h^{m+1})}^{T_{h^{m+1}},j} = {Q}_{i(h^{m})}^{T_{h^{m}},j'}-{Q}_{i(h^{m+1})}^{T_{h^{m+1}},j'}$.

If we also account for the normalization condition for $\rho^{(a_{m})}$, we can replace summation over $j$ by choosing any vertex $j_{m}\in (h_{0}^{m})^{-1}(1)\cap (h_{0}^{m+1})^{-1}(1)$ and then recast Eq.~ \eqref{limit-PQT-robust-2} in the form
\begin{eqnarray}
\label{limit-PQT-robust-3} \lim_{\beta\to\infty}{Q}_{\beta}(\gamma)\, \, =\,\, \sum_{m=1}^{k}
\left({Q}_{i(h^{m})}^{T_{h^{m}},j_{m}}-{Q}_{i(h^{m+1})}^{T_{h^{m+1}},j_{m}}\right)\, .
\end{eqnarray}
The right side of Eq.~\eqref{limit-PQT-robust-3} is clearly an integer valued one-chain, so
the proof of Theorem \ref{thm:strongPQT} is complete.

We now turn to the proof of Theorem \ref{thm:realization}. With respect to the above
situation,
consider the free loop $\ell:S^{1}\to |\Gamma|$ defined as follows: when the parameter $s\in S^{1}$ changes from the center of the arc $I_{m}$ to its end $a_{m}$, $\ell(s)$ goes from $i(h^{m})$ to $j_{m}$ along the unique minimal length path in the tree $T_{h^{m}}$. When $s$ changes from $a_{m}$ to the center of the arc $I_{m+1}$, $\ell(s)$ goes from $j_{m}$ to $i(h^{m+1})$ along the unique minimal length path in the tree $T_{h^{m+1}}$. It is easy to see that the right side of Eq.~\eqref{limit-PQT-robust-3}, considered as an element of $H_{1}(|\Gamma|)$, is the image of $\ell$ under the map $L|\Gamma|\to H_{1}(|\Gamma|)$ that associates with a free loop its corresponding homology class. On the other hand it is also easy to see that $(\gamma, \ell) \: S^1 \to (\check{{\cal M}}_{\Gamma}\times |\Gamma|)$ has image in $\check{N}\subset \check{{\cal M}}_{\Gamma}\times |\Gamma|$, and we infer $(\gamma, \ell) \in L\check{N}$. By a straightforward inspection of the definitions we see that the right side of Eq.~\eqref{limit-PQT-robust-3} is given by $\check{{Q}}(\gamma)$ (as defined above in 
Definition \ref{defn:check-Q}), which completes the proof. 
\end{proof}

\section{The Chern class description}
\label{sec:current-homological}


\subsection*{The canonical torus}
Set
\[
C^i(\Gamma;U(1)) := U(1)^{\Gamma_i} \qquad i = 0,1\, ,
\]
where the right side denotes the set
of functions $\Gamma_i\to U(1)$. 
The Lie group 
\[ G_{\Gamma} \,\, := \,\, C^0(\Gamma;U(1))
\] 
is called the {\it gauge group}; 
it  acts on $C^1(\Gamma;U(1))$. The action is defined by 
\[
(g\cdot f)(\alpha) = g(d_0(\alpha))f(\alpha)g(d_1(\alpha))^{-1}\, ,
\] where $g\in G_{\Gamma}$ and 
$f\in C_1(\Gamma;U(1))$. Let 
\[
H^1(\Gamma; U(1))
\] 
denote the orbit space of this action
(alternatively, let $\delta\: C_0(\Gamma;U(1)) \to C_1(\Gamma;U(1))$
be given by $\delta(g)(\alpha) = g(d_0(\alpha))g(d_1(\alpha))^{-1}$,
then  $H^1(\Gamma; U(1))$ is the cokernel of $\delta$).
Then $H^1(\Gamma; U(1))$ is an $n$-torus where $n$ is the first Betti number of
$\Gamma$ (this is the torus $S(\Gamma)$ appearing in  Theorem \ref{thm:Chern_description}).

Observe that an element of $H^1(\Gamma; U(1))$ is represented by a function $\lambda\: \Gamma_1 \to U(1)$.
 
\begin{lem} \label{lem:preferred_iso} There is a preferred isomorphism
\[
H^1(H^1(\Gamma;U(1));\Bbb Z) \,\, \cong \,\, H_1(\Gamma;\Bbb Z)\, .
\]
\end{lem}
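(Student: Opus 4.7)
The plan is to produce the isomorphism as a composition of three canonical identifications, none of which requires a choice. First, I would identify $H^1(\Gamma;U(1))$ as defined in the excerpt (the cokernel of the coboundary $\delta$) with the Pontryagin dual $\operatorname{Hom}(H_1(\Gamma;\Bbb Z),U(1))$. Since $\Gamma$ is a one-dimensional CW complex with $H_0(\Gamma;\Bbb Z)=\Bbb Z$ (it is connected), the universal coefficient theorem gives
\[
H^1(\Gamma;U(1))\,\,\cong\,\,\operatorname{Hom}(H_1(\Gamma;\Bbb Z),U(1))\oplus \operatorname{Ext}(\Bbb Z,U(1))\, ,
\]
and the $\operatorname{Ext}$ term vanishes because $U(1)$ is divisible, hence injective as a $\Bbb Z$-module. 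In fact, since $H_1(\Gamma;\Bbb Z)$ is free abelian of rank $n$, one checks directly from the cellular definition of $H^1(\Gamma;U(1))$ as $C^1/\delta(C^0)$ that assigning to a cocycle its induced holonomy on 1-cycles gives the same canonical identification.

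Second, writing $A:=H_1(\Gamma;\Bbb Z)$ and $T:=\operatorname{Hom}(A,U(1))$, I would apply the exact functor $\operatorname{Hom}(A,-)$ to the short exact sequence $0\to\Bbb Z\to\Bbb R\to U(1)\to 0$ (exactness here uses that $A$ is free abelian). This presents $T$ as the quotient of the real vector space $\operatorname{Hom}(A,\Bbb R)$ by the lattice $\operatorname{Hom}(A,\Bbb Z)$. Consequently $T$ is a torus whose universal cover is $\operatorname{Hom}(A,\Bbb R)$ and whose deck transformation group, and hence fundamental group, is canonically $A^{\ast}:=\operatorname{Hom}(A,\Bbb Z)$.

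Third, for any torus the Hurewicz map gives a canonical isomorphism $H_1(T;\Bbb Z)\cong\pi_1(T)$, and the universal coefficient theorem again yields $H^1(T;\Bbb Z)\cong\operatorname{Hom}(H_1(T;\Bbb Z),\Bbb Z)$ (no $\operatorname{Ext}$ obstruction, since $H_0(T;\Bbb Z)=\Bbb Z$). Composing, we obtain
\[
H^1(T;\Bbb Z)\,\,\cong\,\,\operatorname{Hom}(A^{\ast},\Bbb Z)\, ,
\]
and double duality for finitely generated free abelian groups gives the canonical isomorphism $\operatorname{Hom}(A^{\ast},\Bbb Z)\cong A=H_1(\Gamma;\Bbb Z)$.

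There is no real obstacle here; the only point requiring care is that every step is canonical, so the composite isomorphism deserves to be called \emph{preferred}. The naturality comes from three sources: universal coefficients is natural, the identification $T=\operatorname{Hom}(A,\Bbb R)/\operatorname{Hom}(A,\Bbb Z)$ is functorial in $A$, and the double duality isomorphism $A\to A^{\ast\ast}$ is canonical for finitely generated free abelian groups. In short, the preferred isomorphism is just the composite
\[
H^1(T;\Bbb Z)\,\cong\,\operatorname{Hom}(\pi_1(T),\Bbb Z)\,\cong\,\operatorname{Hom}(\operatorname{Hom}(H_1(\Gamma;\Bbb Z),\Bbb Z),\Bbb Z)\,\cong\,H_1(\Gamma;\Bbb Z)\, .
\]
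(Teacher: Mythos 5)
Your proof is correct, but it follows a genuinely different route from the paper's. The paper works directly with the presentation of $H^1(\Gamma;U(1))$ as a quotient torus of the product torus $C^1(\Gamma;U(1))$: it first identifies $H^1(C^1(\Gamma;U(1));\Bbb Z) = [C^1(\Gamma;U(1)),U(1)]$ with $C_1(\Gamma;\Bbb Z)$ via the coordinate projections $\pi_\alpha$, does the same in degree $0$, observes that under these identifications $\delta^*$ becomes $\partial$, and then notes that the surjection $C^1(\Gamma;U(1))\to H^1(\Gamma;U(1))$ pulls $H^1$ of the quotient isomorphically onto $\ker\delta^*=\ker\partial=H_1(\Gamma;\Bbb Z)$. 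You instead pass through the universal coefficient theorem to recognize $H^1(\Gamma;U(1))$ as $\operatorname{Hom}(H_1(\Gamma;\Bbb Z),U(1))$, use exactness of $\operatorname{Hom}(A,-)$ on $0\to\Bbb Z\to\Bbb R\to U(1)\to 0$ to present this torus with universal cover $\operatorname{Hom}(A,\Bbb R)$ and fundamental group $A^*$, and then run Hurewicz, UCT again, and double duality for finitely generated free abelian groups. Both composites are canonical and (by naturality in $\Gamma$) agree. What the paper's route buys is an explicit, generator-level identification -- the coordinate maps $\pi_\alpha$ -- that is reused immediately afterwards in diagram~\eqref{eq:transfer_diagram} and in the holonomy computation of Theorem~\ref{thm:combinatorial_chern}, where $\pi^*$ needs to literally be the inclusion $H_1(\Gamma;\Bbb Z)\subset C_1(\Gamma;\Bbb Z)$; your more functorial argument establishes the isomorphism cleanly but would require an extra step to unwind into that form.
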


\begin{proof} For $\alpha \in \Gamma_1$, let $\pi_\alpha\:C^1(\Gamma;U(1)) \to U(1)$
denote the coordinate function given by restriction to $\{\alpha\} \subset \Gamma_1$
(use $C^1(\{\alpha\};U(1)) = U(1)$). The operation $\alpha \mapsto \pi^\alpha$ extends linearly to an isomorphism of 
abelian groups 
\[
C_1(\Gamma;\Bbb Z) \cong [C^1(\Gamma;U(1)),U(1)] = H^1(C^1(\Gamma;U(1));\Bbb Z)\, .
\]
We also have a similar isomorphism $C_0(\Gamma;\Bbb Z) \cong 
H^1(C^0(\Gamma;U(1));\Bbb Z)$.
With these identifications, the boundary operator 
$\partial\: C_1(\Gamma;\Bbb Z) \to C_0(\Gamma;\Bbb Z)$
is given by restriction 
$\delta^*\: H^1(C^1(\Gamma;U(1));\Bbb Z) \to H^1(C^0(\Gamma;U(1));\Bbb Z)$. Hence
$H_1(\Gamma;\Bbb Z)$ is identified with the kernel of $\delta^*$. But the inclusion
$H^1(H^1(\Gamma;U(1));\Bbb Z)\subset \text{\rm ker}(\delta^*)$ is clearly an isomorphism.
\end{proof}


\subsection*{A combinatorially defined line bundle}  
We refer the reader to discussion of \S\ref{sec:weak_map}, especially Remark 
\ref{rem:tree_notation}. Recall that 
\[
\check {\cal M}_{\Gamma} = \bigcup_{h} Y_h
\]
is a covering by open sets where $h = (h_0,h_1)$ ranges over extended height functions.
Associated with $Y_h$ one has a tree $T_h := T_{C(h)}$ such that $h_0^{-1}(1) \subset 
T_{C(h)}$. Fix a basepoint vertex
$i$ for $T_h$ (cf.\ Lemma \ref{lemma-limit-robust})

For $(E,W) \in Y_h$ and $\lambda \in C^1(\Gamma;U(1))$,
we associate a complex line in $\Bbb C^n$, where $n$ is the cardinality of $\Gamma_1$.
For any vertex $j$ of the tree
$T_h$, we have a minimal path $P_i^{T_h,j}$ from $i$ to $j$
which is contained in $T$; this path defines 
the integer value 1-chain $Q_i^{T_h,j}$ (cf.\ Remark \ref{sum-formula-for-A}). 

Let $\Bbb C[\Gamma_0]$ denote the complex vector space with basis $\Gamma_0$. 
Then we obtain a non-zero vector
\begin{equation} \label{eq:path-formula}
v = v(h,E,W,\lambda) := \sum_{j \in (T_h)_0} (e^{-\beta E_j}\prod_{\alpha\in P_{i}^{T_h,j}}
\lambda^{s(\alpha)}_{\alpha})j \,\, \in \,\,  \Bbb C[\Gamma_0]
\end{equation}
where $s(\alpha) = \pm 1$ according as to whether the direction of the path coincides with the orientation of $\alpha$ (this sign coincides with the coefficient appearing of $\alpha$ in $Q_i^{T_h,j}$).

Let $\Bbb Cv \subset \Bbb C[\Gamma_0]$ denote the complex line spanned by this vector.

\begin{lem} \label{lem:Cv} If we choose a different basepoint vertex
the complex line $\Bbb Cv$ remains unchanged.
\end{lem}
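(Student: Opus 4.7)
The plan is to show that changing the basepoint from $i$ to another vertex $i'$ of $T_h$ scales $v$ by a nonzero complex number, so the line $\Bbb Cv$ is unaffected. The key is to rewrite the product appearing in the coefficient of $j$ in \eqref{eq:path-formula} as a product indexed by the 1-chain $Q_i^{T_h,j}$: explicitly,
\[
\prod_{\alpha\in P_{i}^{T_h,j}} \lambda_{\alpha}^{s(\alpha)} \,\,=\,\, \prod_{\alpha\in \Gamma_1} \lambda_\alpha^{n_\alpha},
\]
where $n_\alpha$ denotes the coefficient of $\alpha$ in the 1-chain $Q_{i}^{T_h,j}$.

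First I would observe that in the tree $T_h$ one has the cocycle identity
\[
Q_{i}^{T_h,j} \,\,=\,\, Q_{i}^{T_h,i'} \,+\, Q_{i'}^{T_h,j}\, .
\]
This follows from the fact that both sides are 1-chains supported on $T_h$ whose boundaries equal $j - i$, together with $H_1(T_h;\Bbb Z) = 0$. Substituting into the product form above, the coefficient of $j$ in $v(h,E,W,\lambda)$ factors as
\[
e^{-\beta E_j}\prod_{\alpha} \lambda_\alpha^{(Q_{i}^{T_h,i'})_\alpha}\cdot \prod_{\alpha} \lambda_\alpha^{(Q_{i'}^{T_h,j})_\alpha}\, .
\]
The first factor $c := \prod_{\alpha} \lambda_\alpha^{(Q_i^{T_h,i'})_\alpha}$ is independent of $j$ and lies in $U(1)\subset \Bbb C^\times$, hence is a nonzero constant.

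Therefore $v(h,E,W,\lambda) = c \cdot v'(h,E,W,\lambda)$, where $v'$ denotes the vector constructed using $i'$ as the basepoint. Since $c\ne 0$, the two vectors span the same complex line. The only potentially subtle point is justifying the 1-chain identity, but as noted this is immediate from $H_1(T_h;\Bbb Z) = 0$, so no real obstacle arises.
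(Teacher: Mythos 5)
Your proof is correct and arrives at exactly the multiplicative factorization the paper uses, namely
\[
\prod_{\alpha\in P_{i}^{T_h,j}}\lambda^{s(\alpha)}_{\alpha}
= \Bigl(\prod_{\alpha\in P_{i}^{T_h,i'}} \lambda^{s(\alpha)}_{\alpha}\Bigr)\Bigl(\prod_{\alpha\in P_{i'}^{T_h,j}}\lambda^{s(\alpha)}_{\alpha}\Bigr),
\]
from which basepoint independence of $\Bbb C v$ is immediate since the first factor is independent of $j$. The difference lies in how this factorization is justified. The paper concatenates the two minimal paths and argues directly about the product over edges, splitting into the cases where the concatenation is or is not itself minimal, and in the latter case invoking cancellation of repeated edges; this is slightly informal since one must check repeated edges occur with opposite signs. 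You instead pass to the level of $1$-chains, using $Q_i^{T_h,j}=Q_i^{T_h,i'}+Q_{i'}^{T_h,j}$, which you deduce from equality of boundaries together with the vanishing of cycles in a tree. This is cleaner and avoids the case split: once everything is a chain in $C_1(T_h;\Bbb Z)$, the overlapping-edge cancellations happen automatically, and the multiplicative identity is the image under $\alpha\mapsto\lambda_\alpha$ of an additive identity. (One small sign slip: with the paper's convention $\partial\alpha = d_0(\alpha)-d_1(\alpha)$ one has $\partial Q_i^{T_h,j}=i-j$, not $j-i$; since both sides of your chain identity pick up the same sign, the argument is unaffected.) Either route is short; yours buys a tidier justification of the key identity at the small cost of invoking the $H_1(\text{tree})=0$ observation.
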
 

\begin{proof} The amalgamation of the minimal length paths $P_i^{T_h,i'}$ 
and $P_{i'}^{T_h,j}$ produces a new path $P_i^{T_h,i'}P_{i'}^{T_h,j}$ from $i$ to $j$. 
If this path is
minimal, then it is $P_i^{T_h,j}$ and clearly, we have
\[
\prod_{\alpha\in P_{i}^{T_h,j}}\lambda^{s(\alpha)}_{\alpha} = (\prod_{\alpha\in P_{i}^{T_h,i'}} \lambda^{s(\alpha)}_{\alpha}) (\prod_{\alpha\in P_{i'}^{T_h,j}}\lambda^{s(\alpha)}_{\alpha} )\, .
\]
If the amalgamated path isn't minimal, then this formula still holds because the factors
corresponding to indices occurring more than once cancel. This gives independence with respect to the basepoint vertex, as the first factor on the right is independent of $j$.
\end{proof}

For fixed $h$ the assignment $(E,W,\lambda) \mapsto {\Bbb C}v(h,E,W,\lambda)$ describes a line
bundle $\tilde{\xi}_h$ over $C^1(\Gamma;U(1))\times Y_h$. In fact, it is straightforward to check
that $\tilde{\xi}_h$ is trivializable.
We now use the clutching construction to glue  these line bundles together as $h$ varies. This will produce a line bundle over $\tilde{\xi}$ over
$C^1(\Gamma;U(1))  \times \check {\cal M}_{\Gamma}$.
To check this, it suffices to establish the following.

\begin{lem} \label{transition} Given height functions $h$ and $h'$, let 
\[ a\: C^1(\Gamma;U(1)) \times (Y_h \cap Y_{h'}) \to C^1(\Gamma;U(1)) \times Y_h
\]
and  
\[ b\: C^1(\Gamma;U(1)) \times (Y_h \cap Y_{h'}) \to C^1(\Gamma;U(1)) \times Y_{h'}
\] denote the inclusions. 
Then there is an isomorphism of line bundles
$\phi_{ab}\: b^* \tilde{\xi}_{h'} @> \cong >> a^*{\tilde \xi}_h $.
Furthermore, this isomorphism satisfies the cocycle condition
$\phi_{ac} = \phi_{ab}\phi_{bc}$.
\end{lem}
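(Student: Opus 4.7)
The plan is to construct $\phi_{ab}$ by exploiting the fact that each of the bundles $a^*\tilde\xi_h$ and $b^*\tilde\xi_{h'}$ comes equipped with a preferred nowhere-vanishing generating section inherited from the formula in Eq.~\eqref{eq:path-formula}. Over the overlap $C^1(\Gamma;U(1))\times (Y_h\cap Y_{h'})$, the section $(E,W,\lambda)\mapsto v(h,E,W,\lambda)$ trivializes $a^*\tilde\xi_h$ and $(E,W,\lambda)\mapsto v(h',E,W,\lambda)$ trivializes $b^*\tilde\xi_{h'}$. Both sections are smooth, nowhere zero (the Boltzmann weights $e^{-\beta E_j}$ are positive and each $\lambda_\alpha\in U(1)$), and they are defined using the fixed basepoint choices $i(h)$ and $i(h')$ that were made at the start of the construction.

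First I would define $\phi_{ab}$ on each fiber as the unique $\mathbb{C}$-linear isomorphism between the one-dimensional subspaces $\mathbb{C}\,v(h',E,W,\lambda)\subset \mathbb{C}[\Gamma_0]$ and $\mathbb{C}\,v(h,E,W,\lambda)\subset \mathbb{C}[\Gamma_0]$ sending $v(h',E,W,\lambda)$ to $v(h,E,W,\lambda)$. Smoothness of this assignment as the base point varies is immediate: both generators are smooth and nonvanishing, so the induced bundle map is the pointwise rescaling by a smooth nonvanishing $\mathbb{C}^\ast$-valued function.

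For the cocycle condition, given a third height function $h''$ with inclusion $c$, the same rule defines $\phi_{bc}$ as the fiberwise map sending $v(h'',E,W,\lambda)$ to $v(h',E,W,\lambda)$, and $\phi_{ac}$ as sending $v(h'',E,W,\lambda)$ directly to $v(h,E,W,\lambda)$. Hence the composite $\phi_{ab}\circ \phi_{bc}$ sends the generator $v(h'',E,W,\lambda)$ first to $v(h',E,W,\lambda)$ and then to $v(h,E,W,\lambda)$, matching $\phi_{ac}$ on this generator. Linearity of the isomorphism then forces the equality $\phi_{ac}=\phi_{ab}\phi_{bc}$ throughout the triple overlap.

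The main subtlety is the dependence on the basepoints $i(h)$: by Lemma \ref{lem:Cv}, altering $i(h)$ rescales $v(h,E,W,\lambda)$ by a nonvanishing factor (a product of $\lambda_\alpha^{s(\alpha)}$ along a path in $T_h$), producing a different but isomorphic clutching datum; since the basepoints are fixed once and for all, the cocycle relation goes through without ambiguity, and the resulting line bundle $\tilde\xi$ is well defined. The only nontrivial point to confirm is smoothness of the rescaling $\phi_{ab}$ as a function on the overlap, which follows because $Y_h\cap Y_{h'}$ is open in $\check{\cal M}_\Gamma$ and the two generating sections extend smoothly across it.
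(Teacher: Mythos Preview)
Your proposal is correct and follows essentially the same approach as the paper: define $\phi_{ab}$ as the unique linear map carrying the generating vector $v_{h'}$ to $v_h$, after which the cocycle condition is immediate from tracking generators through a triple overlap. Your version is in fact slightly more explicit than the paper's (which is quite terse), and your use of the globally fixed basepoints $i(h)$ rather than a common basepoint in $T_h\cap T_{h'}$ makes the cocycle verification marginally cleaner.
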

\begin{proof}  Associated with $(\lambda,E,W) \in C^1(\Gamma;U(1)) \times Y_h$ 
and a basepoint vertex $i$ for $T_h \cap T_{h'}$, we have
a non-zero vector $v$ which is defined by  Eq.~\eqref{eq:path-formula}.  To indicate
the dependence of this vector on $h$, let us redenote it  by $v_h$. Similarly,
for $(\lambda,E,W) \in C^1(\Gamma;U(1)) \times Y_{h'}$ we have $v_{h'}$.
Then  define $\phi_{ab}(z\cdot v_{h'}) := z\cdot v_{h}$.
The cocycle condition is then immediate.
\end{proof}

Let the gauge group $G_{\Gamma}$ act diagonally
$C^1(\Gamma;U(1))  \times \check {\cal M}_{\Gamma}$ (where the action on the second factor
is trivial). Then $G_{\Gamma}$ also acts in an evident way on the total space $E(\tilde{\xi})$ of the line bundle $\tilde{\xi}$ equipping it with the structure of a  $G_\Gamma$-equivariant line bundle.
 Taking orbit spaces defines a line bundle
$\xi$ over $H^1(\Gamma;U(1)) \times \check{{\cal M}}_{\Gamma}$. 
If $\pi\: C^1(\Gamma;U(1)) \to H^1(\Gamma;U(1))$ is the quotient map, then $\tilde{\xi}$ is given by
the base change of $\xi$ along
\[\pi\times \text{id}\: C^1(\Gamma;U(1)) \times \check{{\cal M}}_{\Gamma} 
\to H^1(\Gamma;(U(1))\times \check{{\cal M}}_{\Gamma}\, .
\]
Naturality of Chern classes gives a commutative diagram
\begin{equation}\label{eq:transfer_diagram}
\xymatrix{
H_1(\check {\cal M}_{\Gamma};\Bbb Z) \ar[rr]^{c_1(\tilde{\xi})/} \ar[rrd]_{c_1(\xi)/}
 && C_1(\Gamma;\Bbb Z)  \\
&&  H_1(\Gamma;\Bbb Z) \, .\ar@{^{(}->}[u]_{\pi^*}
}
\end{equation}
Here we have used the preferred isomorphism  $H^1(H^1(\Gamma;U(1));\Bbb Z) = H_1(\Gamma;\Bbb Z)$
of Lemma \ref{lem:preferred_iso} as well as
a similarly constructed identification  $H^1(C^1(\Gamma;U(1));\Bbb Z) = C_1(\Gamma;\Bbb Z)$.
With respect to these identifications, $\pi^*$, which is the 
homomorphism induced by $\pi$ on first integer cohomology, is just
the canonical
inclusion $H_1(\Gamma;\Bbb Z) \subset C_1(\Gamma;\Bbb Z)$.

\begin{thm} \label{thm:combinatorial_chern} The homomorphism $c_1(\xi)/$ coincides with ${\check q}_*$.
\end{thm}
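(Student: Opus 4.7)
The plan is to compute $c_1(\xi)/$ explicitly via a Čech model for the Chern class and match the output, term by term, with the $1$-chain that was produced in the proof of Theorem~\ref{thm:realization}. Because of the commutative diagram \eqref{eq:transfer_diagram} and the fact that $\pi^{\ast}$ is the canonical inclusion $H_{1}(\Gamma;\Bbb Z) \hookrightarrow C_{1}(\Gamma;\Bbb Z)$, it suffices to show that for every loop $\gamma\in L\check{\cal M}_{\Gamma}$ the class $c_{1}(\tilde{\xi})/[\gamma] \in C_{1}(\Gamma;\Bbb Z)$ coincides with the $1$-chain representative of $\check{q}_{\ast}[\gamma]$ produced at the end of \S\ref{sec:strongPQT}. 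This is enough because both sides are homomorphisms out of $H_{1}(\check{\cal M}_{\Gamma};\Bbb Z)$, which is generated by homology classes of smooth loops.

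First I would compute the Čech $1$-cocycle of transition functions for $\tilde{\xi}$. Trivialize $\tilde{\xi}_{h}$ over $C^{1}(\Gamma;U(1))\times Y_{h}$ by the non-vanishing section $s_{h}(\lambda,E,W) := v(h,E,W,\lambda;i_{h})$ for a once-and-for-all choice of basepoint $i_{h} \in T_{h}$. On the overlap $Y_{h}\cap Y_{h'}$ pick a common basepoint $i\in T_{h}\cap T_{h'}$. Using the basepoint-change identity established in Lemma~\ref{lem:Cv}, namely $v(h,E,W,\lambda;i) = \lambda^{P_{i}^{T_{h},i_{h}}}\,s_{h}$, a direct computation shows that the clutching isomorphism $\phi_{ab}$ of Lemma~\ref{transition} has the form
\[
\phi_{ab}(s_{h'}) \,=\, \lambda^{\,c_{h,h'}}\,s_{h}\, , \qquad c_{h,h'} \,=\, P_{i_{h'}}^{T_{h'},\,i} + P_{i}^{T_{h},\,i_{h}}\,,
\]
where $\lambda^{c}:=\prod_{\alpha}\lambda_{\alpha}^{c(\alpha)}$ for $c\in C_{1}(\Gamma;\Bbb Z)$. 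Under the identification $[C^{1}(\Gamma;U(1)),U(1)]=C_{1}(\Gamma;\Bbb Z)$, the transition function $t_{h,h'}(\lambda)=\lambda^{c_{h,h'}}$ is classified by the $1$-chain $c_{h,h'}$, and the cocycle identity follows from the evident relation $c_{h,h''}=c_{h,h'}+c_{h',h''}$ up to boundaries.

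Next, given $\gamma\in L\check{\cal M}_{\Gamma}$, pick a simplicial decomposition $I_{1},\ldots,I_{k}$ of $S^{1}$ into closed arcs with $\gamma(I_{m})\subset Y_{h^{m}}$, exactly as in the proof of Theorems~\ref{thm:strongPQT} and~\ref{thm:realization}. Pulling back the Čech data for $\tilde{\xi}$ along $\gamma$ produces a Čech $1$-cocycle on the cover $\{I_{m}\times C^{1}(\Gamma;U(1))\}$ whose value at each overlap point $a_{m}=I_{m}\cap I_{m+1}$ is $t_{h^{m},h^{m+1}}$. The slant product of the resulting $c_{1}$ with the fundamental class $[S^{1}]=[\gamma]$ is then given by the sum of these transition classes, yielding
\[
c_{1}(\tilde{\xi})/[\gamma] \,=\, \sum_{m=1}^{k}c_{h^{m},h^{m+1}} \,\in\, C_{1}(\Gamma;\Bbb Z)\,.
\]
Choosing the common basepoint at $a_{m}$ to be the vertex $j_{m}\in(h_{0}^{m})^{-1}(1)\cap(h_{0}^{m+1})^{-1}(1)$ used in \S\ref{sec:strongPQT}, and taking $i_{h^{m}}=i(h^{m})$, the chain $c_{h^{m},h^{m+1}}$ is exactly $Q_{i(h^{m})}^{T_{h^{m}},j_{m}}-Q_{i(h^{m+1})}^{T_{h^{m+1}},j_{m}}$. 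Comparing with Eq.~\eqref{limit-PQT-robust-3} identifies the above sum with the $1$-chain representing $\check{q}_{\ast}[\gamma]$ as a homology class in $H_{1}(\Gamma;\Bbb Z)$.

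The main obstacle will be paragraph two: pinning down the precise form of the transition functions and checking that the choices (basepoints $i_{h}$ globally, common basepoint $i$ locally) feed into a well-defined Čech cocycle compatible with the gauge-equivariance needed to descend $\tilde\xi$ to $\xi$. This requires a careful bookkeeping of the signs $s(\alpha)$ and a verification of the cocycle identity when three opens $Y_{h},Y_{h'},Y_{h''}$ meet, where one must cancel paths through two different common basepoints. Once this is in place, the remaining two paragraphs reduce to a direct matching of formulas, using no new ideas beyond those already in the proof of Theorem~\ref{thm:realization}.
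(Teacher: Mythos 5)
Your proposal is correct and, on inspection, amounts to the same argument as the paper's, merely phrased in \v{C}ech-transition-function language rather than holonomy language. The paper reduces, via the diagram \eqref{eq:transfer_diagram}, to computing $c_1(\tilde{\xi})/[\gamma]$, and does so by computing the holonomy of $\tilde{\xi}$ around $\gamma$ as the product in Eq.~\eqref{eq:holonomy}; for a $U(1)$-bundle that holonomy is exactly the product of the clutching transition functions of Lemma \ref{transition} around the arcs, which is the quantity you compute, so the two computations agree term by term. The paper then identifies the homotopy class of the resulting map $C^1(\Gamma;U(1)) \to U(1)$ with the integer chain \eqref{eq:holonomy-additive} exactly as you do, and matches it against $\check q_*[\gamma]$ via the path-lift into $\check N$.

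Two small remarks on your paragraph two. First, there appears to be a sign slip in going from your formula $c_{h,h'} = P_{i_{h'}}^{T_{h'},i} + P_i^{T_h,i_h}$ to the claimed identity $c_{h^m,h^{m+1}} = Q_{i(h^m)}^{T_{h^m},j_m} - Q_{i(h^{m+1})}^{T_{h^{m+1}},j_m}$; directly translating your chain gives $Q_{i(h^{m+1})}^{T_{h^{m+1}},j_m} - Q_{i(h^m)}^{T_{h^m},j_m}$, the negative, and the discrepancy traces to the orientation convention in how $\phi_{ab}$ is read as a \v{C}ech cocycle. You already flag sign bookkeeping as the main obstacle, and this is exactly where it bites. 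Second, you need not worry about the cocycle identity failing at triple overlaps because of basepoint choices: Lemma \ref{lem:Cv} shows the lines $\Bbb C v$ are basepoint-independent, and Lemma \ref{transition} already records that the cocycle condition for the $\phi_{ab}$ holds exactly, so the identity $c_{h,h''} = c_{h,h'} + c_{h',h''}$ holds on the nose (not merely up to boundary) once the common basepoint ambiguity is resolved via Lemma \ref{lem:Cv}.
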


In order to prove Theorem \ref{thm:combinatorial_chern} we digress to
explain how holonomy relates to the homomorphism given by
slant product with the first Chern class. If $\xi$ is a complex line bundle
over a connected space $B$ and structure group $U(1)$,
we have the holonomy map
\begin{equation}
h_\xi\: LB \to U(1)\, .
\end{equation}
In fact, $h_\xi$ can be chosen in such a way that
if we choose a basepoint of $B$ and restrict
$h_\xi$ to the based loop space $\Omega B$, 
we can deloop to a map 
$B \to BU(1) = {\Bbb C}P^\infty$ that classifies the bundle $\xi$ 
and hence the Chern class $c_1(\xi)$. Consequently, if we choose
$h_\xi$ in this way, it determines the Chern class.

Now suppose $B = X \times Y$. Then we can restrict $h_\xi$ to the subspace
$X \times LY \subset LX \times LY = L(X\times Y)$ and take the adjoint
to  obtain a map
\[
LY \to F(X,U(1))\, ,
\]
where $F(X,U(1))$ is the function space of maps from $X$ to $U(1)$.
Then the diagram
\begin{equation}
\xymatrix{
LY \ar[r]\ar[d] &  F(X,U(1))\ar[d] \\
H_1(Y;\Bbb Z) \ar[r]_{c_1(\xi)/} & H^1(X;\Bbb Z)
}
\end{equation}
commutes, where the left vertical map sends a loop to its homology class and the
right vertical map sends a function to its homotopy class considered as
an element of $H^1(X;\Bbb Z) = [X,U(1)]$.

\begin{proof}[Proof of Theorem \ref{thm:combinatorial_chern}]
 By Eq.~\eqref{eq:transfer_diagram} it suffices to show that 
$c_1(\tilde{\xi})/$ coincides with the homomorphism 
\[
H_1(\check {\cal M}_{\Gamma};\Bbb Z) @> {\check q}_* >>  H_1(\Gamma;\Bbb Z)
@> \pi^* >> C_1(\Gamma;\Bbb Z)  \, .
\]
Suppose we are given $(\lambda,\gamma)\in C^1(\Gamma;U(1)) \times L\check {\cal M}_{\Gamma}$.
As done previously, we partition $S^1$ into closed 
arcs $[a_{k},a_{k+1}]$ for $0\le k\le n$ with $a_{n+1} \equiv a_0$,
such that the projection of such an arc into  $\check {\cal M}_{\Gamma}$ is contained in
a neighborhood of type $Y_{h_{k-1}}$.  Choose a basepoint vertex $i_k$ lying in the 
intersection $T_{k} \cap T_{k+1}$, where $T_k$ denotes
the tree associated with $Y_{h_{k}}$ . Then we have a minimal length path 
$P_{i_k}^{T_k,i_{k+1}}$ from $i_k$ to $i_{k+1}$ and the product
\begin{equation}\label{eq:holonomy}
\prod_{k=0}^{n} \prod_{\alpha \in P_{i_k}^{T_k,i_{k+1}}} \lambda^{s(\alpha)}_\alpha  \in U(1)
\end{equation}
describes a map $C^1(\Gamma;U(1)) \to U(1)$ that gives the holonomy 
around $\gamma$ (here we are using Lemmas \ref{lem:Cv} and \ref{transition}).

In the special instance of $\lambda\in C^1(\Gamma; U(1))$ which is identically $1$ except for a single edge $\alpha$, the value of the map $C^1(\Gamma;U(1)) \to U(1)$
at $\lambda$ is given by $\lambda_\alpha^q$ where $q$ represents the net number of times $\alpha$ is traversed, with orientation taken into account. Consequently, if we identify 
$H^1(C^1(\Gamma;U(1));\Bbb Z)$ with $C_1(\Gamma;\Bbb Z)$, then $\lambda_\alpha^q$
is identified with the chain $q\alpha$. It follows that
the map $C^1(\Gamma; U(1)) \to U(1)$ defined by Eq.~\eqref{eq:holonomy} corresponds
to the integer cycle in $C_1(\Gamma;\Bbb Z)$ given by 
\begin{equation} \label{eq:holonomy-additive}
\sum_{k=0}^{n} \sum_{\alpha \in P_{i_k}^{T_k,i_{k+1}}} s(\alpha)\alpha \,\, := \,\,  
\sum_{k=0}^{n} Q_{i_k}^{T_k,i_{k+1}}\, .
\end{equation}
On the other hand, the paths $P_{i_k}^{T_k,i_{k+1}}$ describe a lift
of $\gamma\: S^1 \to \check {\cal M}_{\Gamma}$ through the space 
$\check N$ appearing Eq.\ \eqref{define-check-N} (roughly,  one defines the lift
by mapping the midpoint $a'_k$ of the arc $[a_{k},a_{k+1}]$ to the point 
$(\gamma(a'_k),i_k)$ and 
uses  $P_{i_k}^{T_k,i_{k+1}}$ to connect these points). Then application of
the projection map $p_2 \: \check N \to |\Gamma|$ to the given lift 
produces a map $S^1 \to |\Gamma|$ that represents $\check q_*([\gamma]) \in H_1(\Gamma;\Bbb Z) 
\subset C_1(\Gamma;\Bbb Z)$.
From this description it is straightforward to check that $\check q_*([\gamma])$
coincides with the element defined by Eq.~\eqref{eq:holonomy-additive} (cf.\ Eq.~\eqref{eq:expression_for_current}). 
\end{proof}

\section{The ground state bundle: a conjecture}

By coupling the master operator with elements of the torus $H^1(\Gamma;U(1))$ one
can  extend the master operator to a self-adjoint operator over the complex numbers. 
This extension is called the twisted master operator;
its eigenvalues are real and non-positive. The eigenspace associated with the maximum
non-zero eigenvalue is called the {\it ground state.}
One may use the twisted master operator to  
 define another weak complex line bundle $\eta$, this time over
$H^1(\Gamma;U(1))  \times \tilde {\cal M}_{\Gamma}$, where $\tilde {\cal M}_{\Gamma} \subset
 {\cal M}_{\Gamma}$ is characterized by the condition that the ground state at
 each point is non-degenerate, meaning that it has rank one.
Then roughly, $\eta$ is defined
by taking the ground state at each point of the base.
We call this the {\it ground state bundle}.
Arguments from physics suggest that the ground state bundle is
equivalent to the weak complex
line bundle $\xi$ that was defined in the previous section.  In what follows
we will formulate this idea as a pair of conjectures.

\subsection*{The twisted master operator}
The {\it twisted master operator,} defined below, is a smooth map
 \[
 \bar H\: C^1(\Gamma;U(1)) \times \cal M_{\Gamma} \to  \End_{\Bbb C}(C_{0}(X;\mathbb{C}))\, ,
 \] 
where $C_{0}(X;\mathbb{C})$ is the complex vector space with basis
$\Gamma_0$.  It extends the master operator in the sense that
 \[
 \bar H(1,\beta,E,W) = H(\beta,E,W)
 \]
 where $1 \in C^1(\Gamma;U(1))$ is the function with constant value $1\in U(1)$ and where
 we are interpreting the right side of this identity using extension of scalars.

For   $\lambda \in C^1(\Gamma;U(1))$, let $\hat \lambda\: C_1(\Gamma;\Bbb C) \to 
C_1(\Gamma;\Bbb C)$ be given by rescaling each basis element $\alpha$ by $\lambda(\alpha)\alpha$.
Then
\[
\bar H(\lambda,\beta,E,W) := -\partial \hat g^{-1} \hat \lambda \partial^* \hat\kappa\, .
\]
It is clear from the definition that $\bar H$ is self-adjoint. In particular, its eigenvalues
are all real.

Let the gauge group $G_{\Gamma}$ act on $\End_{\Bbb C}(C_{0}(X;\mathbb{C}))$ 
 via conjugation and trivially on $\cal M_{\Gamma}$. The following is then
 a formal consequence of the definitions.

\begin{lem}[Gauge Symmetry]  The twisted master operator
is $G_{\Gamma}$-equivariant, i.e., for $h\in G_{\Gamma}$, we have
\[
\bar H(h\cdot\lambda,\beta,E,W) = h \cdot \bar H(\lambda,\beta,E,W)\, .
\]
In particular, 
for each $(E,W) \in \cal M_{\Gamma}$, 
the spectrum of  $H(\beta,E,W) = \bar H(1,\beta,E,W)$
is invariant with respect to the action of the gauge group.
\end{lem}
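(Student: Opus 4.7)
The action of $G_\Gamma$ on $\End_\Bbb C(C_0(\Gamma;\Bbb C))$ is by conjugation, so I need to verify
\[
\bar H(h\cdot\lambda,\beta,E,W) \;=\; \hat h \,\bar H(\lambda,\beta,E,W)\,\hat h^{-1}
\]
for every $h \in G_\Gamma$, where $\hat h \in \End_\Bbb C(C_0(\Gamma;\Bbb C))$ is the diagonal operator with entry $h(i)$ on the basis vector $i \in \Gamma_0$. Since the paper advertises the statement as a formal consequence of the definitions, the plan is a direct substitution.

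First, I would introduce, for each $h \in G_\Gamma$, the two natural edge-lifts $\hat h^{(0)},\hat h^{(1)} \in \End_\Bbb C(C_1(\Gamma;\Bbb C))$ defined by $\hat h^{(k)}(\alpha) = h(d_k\alpha)\alpha$. These are diagonal and therefore commute with $\hat g$. Reading the gauge-action formula $(h\cdot\lambda)(\alpha) = h(d_0\alpha)\lambda(\alpha)h(d_1\alpha)^{-1}$ off the definition of $\hat\lambda$ gives the operator identity
\[
\widehat{h\cdot\lambda} \;=\; \hat h^{(0)}\,\hat\lambda\,\bigl(\hat h^{(1)}\bigr)^{-1}.
\]
Decomposing $\partial = \partial_0 - \partial_1$ with $\partial_k(\alpha) = d_k(\alpha)$, the face-wise intertwining $\hat h\,\partial_k = \partial_k\,\hat h^{(k)}$ is immediate from matching diagonal entries on each $\alpha$, and dually $\hat h^{(k)}\,\partial_k^* = \partial_k^*\,\hat h$.

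Second, I would substitute the factorization of $\widehat{h\cdot\lambda}$ into the defining formula $\bar H = -\partial\,\hat g^{-1}\,\hat\lambda\,\partial^*\,\hat\kappa$, use that $\hat g$ and $\hat\kappa$ are diagonal (so commute with the $\hat h^{(k)}$ and with $\hat h$ respectively), and finally use the intertwining relations to migrate $\hat h^{(0)}$ leftward past $\partial$ as $\hat h$ and $(\hat h^{(1)})^{-1}$ rightward past $\partial^*$ as $\hat h^{-1}$, assembling
\[
\bar H(h\cdot\lambda) \;=\; \hat h\,\bigl(-\partial\,\hat g^{-1}\,\hat\lambda\,\partial^*\,\hat\kappa\bigr)\,\hat h^{-1} \;=\; \hat h\,\bar H(\lambda)\,\hat h^{-1}.
\]
The spectral invariance in the ``in particular'' clause is then immediate: conjugation preserves eigenvalues, so for fixed $(\beta,E,W)$ the spectrum of $\bar H(\lambda,\beta,E,W)$ is constant on $G_\Gamma$-orbits of $\lambda$; specializing to $\lambda \equiv 1$ (where $h\cdot 1 = \delta(h)$ is the coboundary of $h$) gives that the spectrum of $H(\beta,E,W)$ is gauge-invariant.

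The step I expect to require the most care is the migration of the edge-lifts past the unsplit boundary maps $\partial$ and $\partial^*$. The identity $\hat h\,\partial = \partial_0\,\hat h^{(0)} - \partial_1\,\hat h^{(1)}$ has two face-components with different edge-lifts, so one cannot naively pull a single edge-lift through $\partial$; one must track the expansion of $\partial$ and $\partial^*$ into their face contributions and verify that the four cross terms built from $\partial_0,\partial_1,\partial_0^*,\partial_1^*$ together with the $\hat h^{(k)}$ factors reassemble into the outer conjugation by $\hat h$. This is the only place in an otherwise purely symbolic manipulation where bookkeeping is required.
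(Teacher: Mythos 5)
Your setup is correct and you have flagged exactly the right danger spot, but if you carry out the bookkeeping you defer, it does not close. Expanding both sides over the face maps and using your (valid) intertwining identities $\hat h\,\partial_k = \partial_k\,\hat h^{(k)}$, $\partial_k^*\,\hat h^{-1} = (\hat h^{(k)})^{-1}\partial_k^*$, and $\widehat{h\cdot\lambda}=\hat h^{(0)}\hat\lambda(\hat h^{(1)})^{-1}$, one finds after commuting the diagonal operators that
\[
\bar H(h\cdot\lambda) \;=\; -\sum_{k,l\in\{0,1\}}(-1)^{k+l}\,\partial_k\,\hat g^{-1}\,\hat h^{(0)}(\hat h^{(1)})^{-1}\hat\lambda\;\partial_l^*\,\hat\kappa,
\]
whereas
\[
\hat h\,\bar H(\lambda)\,\hat h^{-1} \;=\; -\sum_{k,l\in\{0,1\}}(-1)^{k+l}\,\partial_k\,\hat g^{-1}\,\hat h^{(k)}(\hat h^{(l)})^{-1}\hat\lambda\;\partial_l^*\,\hat\kappa.
\]
Only the $(k,l)=(0,1)$ summands agree. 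The $(0,0)$ and $(1,1)$ summands of $\bar H(h\cdot\lambda)$ carry the spurious factor $\hat h^{(0)}(\hat h^{(1)})^{-1}$, and the $(1,0)$ summand carries $\hat h^{(0)}(\hat h^{(1)})^{-1}$ where the conjugated operator has its inverse; none of this cancels for nonconstant $h$.

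In fact the lemma is false for $\bar H(\lambda)=-\partial\hat g^{-1}\hat\lambda\partial^*\hat\kappa$ as literally written. On the graph with two vertices and one edge $\alpha$, $d_0\alpha=1$, $d_1\alpha=2$, one gets $\bar H(\lambda)=-g_\alpha^{-1}\lambda_\alpha\left(\begin{smallmatrix}\kappa_1&-\kappa_2\\-\kappa_1&\kappa_2\end{smallmatrix}\right)$, whose nonzero eigenvalue $-g_\alpha^{-1}\lambda_\alpha(\kappa_1+\kappa_2)$ is rescaled by $h_1h_2^{-1}$, not preserved, under $\lambda\mapsto h\cdot\lambda$; so the ``in particular'' spectral clause fails too, and one checks that $\bar H$ is not $\hat\kappa$-self-adjoint unless $\lambda_\alpha=\pm1$, contrary to the surrounding prose. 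The paper gives no proof (the lemma is asserted to be a ``formal consequence of the definitions''), so there is no argument to diverge from; your careful face-wise accounting is precisely what exposes the defect. A gauge-equivariant twisted master operator must keep the $k=l$ face contributions $\lambda$-free and twist the $(0,1)$ and $(1,0)$ contributions by opposite powers $\lambda^{\pm1}$; with that repaired definition your argument goes through verbatim.
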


\begin{defn} Let $A\: V \to V$ be a self-adjoint linear transformation of a finite dimensional
complex vector space $V$, all of whose eigenvalues are non-positive. Then the {\it ground state} $L$ is the eigenspace for maximal
eigenvalue of $A$.  We say that $A$ has {\it nondegenerate} its ground state $L$ has rank one.
\end{defn}

\subsection*{An analytically defined weak line bundle}
Define an open subset 
\[ \tilde {\cal M}_{\Gamma} \subset \Bbb R_+ \times \cal M_\Gamma
\] to be
the set of those $(\beta_0,E,W)$ such that for every $\lambda \in C^1(\Gamma;U(1))$
and every $\beta \ge \beta_0$,
the twisted master operator
\[
\bar H(\lambda,\beta,E,W)\: C_0(\Gamma;\Bbb C) \to C_0(\Gamma;\Bbb C)
\] has a non-degenerate
ground state. 

For $(\lambda,\beta_0,E,W) \in C^1(\Gamma;U(1)) \times \tilde {\cal M}_{\Gamma}$,
let us denote the ground state of the twisted master operator by 
$\Bbb L(\beta_0,\lambda,E,W)$; it is a complex line in $C_0(\Gamma;\Bbb C)$. Consider
$$
E = \{(\lambda,\beta_0, E,W,v)| v \in \Bbb L(\beta_0,\lambda,E,W)\}
$$
which is topologized as a subspace of 
$C^1(\Gamma;U(1)) \times \tilde {\cal M}_{\Gamma} \times 
C_0(\Gamma;\Bbb C)$. Then we have an evident projection 
$$
p\: E  \to C^1(\Gamma;U(1))  \times \tilde {\cal M}_{\Gamma}  \, .
$$

\begin{lem} \label{lem:line-bundle} The map $p$ is a smooth complex line bundle projection.
\end{lem}

\begin{proof} Let $V = C_0(\Gamma;\Bbb C)$ and let
 $L^1(V,V)$ denote 
the space consisting of complex linear self-maps of $V$  having
corank one. Then $L^1(V,V)$ is a smooth manifold of real dimension
$2|\Gamma_0|^2 - 2$ (see \cite[prop.\ 5.3]{GG}). 
 The operation which sends a complex linear self-map
 to its null space defines a smooth map 
\[
L^1(V,V) @> \text{ker} >> \Bbb P^1(V)
\]
whose target is the projective space of complex lines in $V$.
The composition 
\[
C^1(\Gamma;\Bbb C) \times \tilde {\cal M}_{\Gamma} @>\bar H >> L^1(V,V) @> \text{ker} >> \Bbb P^1(V)
\]
is therefore smooth and the pullback of the tautological line bundle
over $\Bbb P^1(V)$ gives the projection $p$.
\end{proof}

Let $\pi\: \tilde {\cal M}_{\Gamma} \to {\cal M}_{\Gamma}$ be
given by the projection $(\beta_0, E,W) \mapsto (E,W)$.

\begin{conj} \label{lem:projection}  The image of $\pi$ is $\check {\cal M}_{\Gamma}$, and
$\pi\: \tilde {\cal M}_{\Gamma} \to \check {\cal M}_{\Gamma}$ is a weak
homotopy equivalence.
\end{conj}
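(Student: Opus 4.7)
The plan is to split the conjecture into three parts: first, show that each fiber of $\pi$ over a point $(E,W)\in \check{\mathcal M}_\Gamma$ is either empty or an open half-ray; second, identify the image as precisely $\check{\mathcal M}_\Gamma$; third, construct a continuous section that exhibits $\pi$ as a deformation retract.

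For the fiber structure, note that by the very definition of $\tilde{\mathcal M}_\Gamma$, if $(\beta_0,E,W)\in\tilde{\mathcal M}_\Gamma$ and $\beta_0'\ge \beta_0$, then $(\beta_0',E,W)\in\tilde{\mathcal M}_\Gamma$. Since $\tilde{\mathcal M}_\Gamma$ is an open subset of $\mathbb R_+\times \mathcal M_\Gamma$ (nondegeneracy of the ground state is an open condition), each fiber is an open half-ray $(\beta_{\min}(E,W),\infty)$, possibly empty. In particular every nonempty fiber is contractible, and $\tilde{\mathcal M}_\Gamma$ is convex in the $\beta$-direction.

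For the identification of the image, the direction $\mathrm{im}(\pi)\subset \check{\mathcal M}_\Gamma$ would be established by showing that on an essential cell $C(h)$ the twisted master operator is forced to undergo a level crossing at arbitrarily large $\beta$ for some $\lambda\in C^1(\Gamma;U(1))$. Concretely, by the analysis of \S\ref{sec:discriminant_theorem} an essential cell carries two competing spanning trees $T_{\sigma_\alpha}, T_{\sigma_\beta}$ which, together with two competing minima $i,j$ of $E$, produce a nontrivial one-cycle $\gamma_h$ in $\Gamma$; I expect that for any $\lambda$ whose holonomy around $\gamma_h$ equals $-1$, the two lowest eigenlines of $\bar H(\lambda,\beta,E,W)$ collide in the limit $\beta\to\infty$, obstructing nondegeneracy. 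For the reverse inclusion, one would cover $\check{\mathcal M}_\Gamma$ by the open sets $Y_h$ of Remark~\ref{rem:tree_notation} and analyze the low-temperature asymptotics of $\bar H$ on each $Y_h$ by multi-scale perturbation theory: the effective operator on the hierarchy of time scales encoded by $T_h$ has, in the $\beta\to\infty$ limit, a unique top eigenline spanned by the vector $v(h,E,W,\lambda)$ of Eq.~\eqref{eq:path-formula}. Compactness of $C^1(\Gamma;U(1))$ together with the fact that $v$ is nowhere zero yields a uniform spectral gap, and Kato's analytic perturbation theory then provides, locally in $(E,W)$, a $\beta_0$ beyond which the nondegeneracy condition holds for every $\lambda$; a partition of unity on $\check{\mathcal M}_\Gamma$ promotes this to a continuous function $\beta_0\colon\check{\mathcal M}_\Gamma\to\mathbb R_+$ whose graph lies in $\tilde{\mathcal M}_\Gamma$.

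Given such a continuous $\beta_0$, the map $s(E,W):=(\beta_0(E,W),E,W)$ is a section of $\pi$, and the straight-line homotopy
\[
H_t(\beta,E,W)\,:=\,\bigl((1-t)\beta + t\beta_0(E,W),\,E,\,W\bigr)
\]
stays in $\tilde{\mathcal M}_\Gamma$ by convexity of fibers. It deformation retracts $\tilde{\mathcal M}_\Gamma$ onto $s(\check{\mathcal M}_\Gamma)$, which is homeomorphic to $\check{\mathcal M}_\Gamma$ via $\pi$; hence $\pi$ is a homotopy equivalence, a fortiori a weak equivalence.

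The main obstacle is the second part, and specifically the uniform low-temperature asymptotics of $\bar H$ in the presence of the twist $\lambda$. At points of the discriminant, many eigenvalues approach zero at exponentially different rates, and tracking the top eigenline uniformly in $\lambda$ requires careful multi-scale analysis rather than naive perturbation around $\beta=\infty$. In particular the level-crossing argument at essential cells must be made uniform in a neighborhood of the cell, and must match the combinatorial recipe of \S\ref{sec:current-homological} precisely---this is essentially equivalent to identifying the ground-state bundle $\eta$ with the combinatorial bundle $\xi$, and is the reason the statement is posed as a conjecture rather than a theorem.
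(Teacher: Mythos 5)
The paper does not prove this statement; it is posed as a conjecture, and the appendix contains a placeholder ``NEEDS WORK'' where a proof would go. There is therefore no paper argument to compare against, and your attempt has to stand on its own.

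Your overall architecture is the natural one and is almost certainly what a complete proof would have to look like: (i) note that the fibers of $\pi$ are upward-closed half-rays in $\beta$, hence contractible; (ii) identify the image with $\check{\mathcal M}_\Gamma$; (iii) produce a continuous section $s\colon\check{\mathcal M}_\Gamma\to\tilde{\mathcal M}_\Gamma$ and use the straight-line homotopy within fibers to exhibit a deformation retraction onto $s(\check{\mathcal M}_\Gamma)$. Step (i) is correct and elementary from the definition of $\tilde{\mathcal M}_\Gamma$. Step (iii) is routine once step (ii) is carried out in the strengthened form you describe, i.e.\ with a continuous choice of $\beta_0(E,W)$; and you are right that the conclusion is then a genuine homotopy equivalence, stronger than what the conjecture asks for.

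The genuine gaps are where you say they are, and they are substantial, not cosmetic. For the inclusion $\mathrm{im}(\pi)\subset\check{\mathcal M}_\Gamma$, the level-crossing heuristic at an essential cell does not yet give a proof: it is not enough that the two top eigenlines of $\bar H(\lambda,\beta,E,W)$ ``collide in the limit $\beta\to\infty$''; one must exhibit an actual degeneracy at finite $\beta$ for some $\lambda$, and do so for every $\beta_0$, while ruling out that the gap merely tends to zero without closing. For the reverse inclusion, the appeal to multi-scale perturbation theory, a uniform-in-$\lambda$ spectral gap on each $Y_h$, and Kato theory is a sensible outline, but the uniform asymptotics near the boundaries $Y_h\cap Y_{h'}$---where the exponentially separated scales reorganize---is precisely the unproven analytic content. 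You correctly observe that settling this is essentially equivalent to Conjecture~\ref{conj:eta=xi}; that equivalence is also why the authors left both statements open.

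One further point you pass over too quickly: the openness of $\tilde{\mathcal M}_\Gamma$ in $\mathbb R_+\times\mathcal M_\Gamma$, which you use both to get half-ray fibers and (implicitly) to make the section well defined, is \emph{not} a formal consequence of ``nondegeneracy is an open condition.'' The defining property of $(\beta_0,E,W)\in\tilde{\mathcal M}_\Gamma$ quantifies over all $\beta\ge\beta_0$, a noncompact range; a small perturbation of $(E,W)$ could, a priori, close the gap at very large $\beta$. So openness already requires a uniform-in-$\beta$ gap estimate of the same kind needed in step (ii), and cannot be taken as an independent, free input.
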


Let $\tilde{\eta}$ be the complex line bundle defined by Lemma \ref{lem:line-bundle}.
The gauge group $G_\Gamma$ acts on both the total and base spaces making
$\tilde{\eta}$ into a $G_\Gamma$-equivariant complex line bundle. Taking orbits, we
obtain a complex line bundle $\eta$ over 
$H^1(\Gamma;U(1))  \times \tilde {\cal M}_{\Gamma}$.

Let $h\: H^1(\Gamma;U(1))  \times \tilde {\cal M}_{\Gamma} \to H^1(\Gamma;U(1))  \times 
\check {\cal M}_{\Gamma}$ be given by $\text{id} \times \pi$.
Then using
Lemma \ref{lem:projection}, the pair $(\eta,h)$ is a weak complex line bundle over
$H^1(\Gamma;U(1))  \times 
\check {\cal M}_{\Gamma}$.





Then slant product with the first Chern class of $\eta$ gives a homomorphism
\[
c_1(\eta)/ \: H_1(\check{\cal M}_{\Gamma};\Bbb Z) \to H_1(\Gamma;\Bbb Z)\, ,
\]
where we have implicitly used the identification $H_1(\check{\cal M}_{\Gamma};\Bbb Z) \cong
H_1(\tilde {\cal M}_{\Gamma};\Bbb Z)$ of Lemma \ref{lem:projection}  and also the identification
 $H^1(H^1(\Gamma;U(1));\Bbb Z)\cong H_1(\Gamma;\Bbb Z)$ of
Lemma \ref{lem:preferred_iso}.

\begin{conj} \label{conj:eta=xi} The homomorphisms $c_1(\eta)/$ and $c_1(\xi)/$ coincide.
\end{conj}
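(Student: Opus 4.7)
Assuming Conjecture~\ref{lem:projection}, my plan is to show directly that the $G_{\Gamma}$-equivariant line bundles $\tilde{\eta}$ and $\tilde{\xi}$ are isomorphic over $C^{1}(\Gamma;U(1)) \times \check{{\cal M}}_{\Gamma}$, so that their descents $\eta$ and $\xi$ are isomorphic and in particular their first Chern classes agree. The comparison $c_1(\eta)/ = c_1(\xi)/$ will then follow, and the latter was identified with $\check q_{*}$ in Theorem~\ref{thm:combinatorial_chern}.

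Because the first Chern class of the smooth line bundle $\tilde{\eta}$ is invariant under perturbation of $\beta$, I would carry out the identification in the low temperature limit $\beta \to \infty$. The central technical assertion is that, uniformly on compacta in each $Y_{h}$ and in $\lambda \in C^{1}(\Gamma;U(1))$, the normalized ground state of $\bar H(\lambda,\beta,E,W)$ converges as $\beta \to \infty$, after a suitable phase choice, to the unit vector proportional to the combinatorial vector $v(h,E,W,\lambda)$ of Eq.~\eqref{eq:path-formula}. Granting this, the transition functions defining $\tilde{\eta}$ on the overlaps $Y_{h} \cap Y_{h'}$ match the clutching data for $\tilde{\xi}$ fixed in Lemma~\ref{transition}, yielding the desired bundle isomorphism. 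Via the recipe for $c_{1}({-})/$ in terms of adjoint holonomy recalled just before the proof of Theorem~\ref{thm:combinatorial_chern}, this is equivalent to the statement that for every $\gamma \in L\check{{\cal M}}_{\Gamma}$ the holonomy of $\tilde{\eta}$ around $\gamma$ reproduces the integer cycle of Eq.~\eqref{eq:holonomy-additive}.

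The main obstacle is the semiclassical analysis required for this asymptotic identification of the ground state. One must establish (i) a spectral gap between the ground state and the rest of the spectrum of $\bar H(\lambda,\beta,E,W)$, bounded uniformly below as $\lambda$ ranges over the compact group $C^{1}(\Gamma;U(1))$, $(E,W)$ over a compact subset of $Y_{h}$, and $\beta$ above some threshold; and (ii) an Arrhenius/Kirchhoff-type assertion that the ground state eigenvector is dominated by the Boltzmann weight of the preferred tree $T_{h}$, with the phase of the $j$-component given, for $j \in (T_{h})_{0}$, by $\prod_{\alpha \in P_{i}^{T_{h},j}} \lambda_{\alpha}^{s(\alpha)}$. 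Item (ii) should emerge from a twisted matrix-tree expansion of the cofactors of $\bar H$ combined with Laplace-type estimates extracting the dominant tree $T_{h}$ as $\beta \to \infty$; basepoint independence, needed to glue estimates across patches, is already supplied by Lemma~\ref{lem:Cv}.

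An equivalent adiabatic-theoretic route, which I expect to be useful for verifying (i) and (ii), partitions $\gamma$ into arcs $\gamma([a_{k},a_{k+1}]) \subset Y_{h_{k}}$ as in the proof of Theorem~\ref{thm:combinatorial_chern}. The adiabatic theorem then provides parallel transport of the ground state on each arc; in the limit $\beta \to \infty$ the intra-patch holonomy contribution tends to the identity (by an argument paralleling Lemma~\ref{Boltzmann_lemma} applied to the spanning-tree Boltzmann weights), while the junction at $a_{k}$ contributes the phase factor $\prod_{\alpha \in P_{i_{k}}^{T_{k},i_{k+1}}} \lambda_{\alpha}^{s(\alpha)}$ appearing in Eq.~\eqref{eq:holonomy}. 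The total holonomy is then the integer cycle of Eq.~\eqref{eq:holonomy-additive}, whose class in $C_{1}(\Gamma;\Bbb Z)$ coincides, by Theorem~\ref{thm:combinatorial_chern}, with $\pi^{*}\check q_{*}([\gamma])$; since $\pi^{*}$ is injective this gives $c_{1}(\eta)/([\gamma]) = \check q_{*}([\gamma])$ as desired.
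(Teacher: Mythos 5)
You have written a sketch, not a proof, and in this case that mismatch with the paper is expected: the statement you are addressing is Conjecture~\ref{conj:eta=xi}, which the paper explicitly leaves open. The authors say only that ``arguments from physics suggest'' the ground state bundle agrees with the combinatorial bundle, and they formulate this as a pair of conjectures; there is no proof in the paper to compare against. Your outline is a reasonable unpacking of the intuition the authors have in mind (low temperature asymptotics of the twisted ground state should reproduce the spanning-tree vector $v(h,E,W,\lambda)$, hence match the clutching data of $\tilde\xi$), and the adiabatic-holonomy route you sketch parallels the mechanism used in the proof of Theorem~\ref{thm:combinatorial_chern} for $\xi$ itself.

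That said, what you have written would not survive as a proof, for reasons you already half-acknowledge. The two steps you label (i) and (ii) --- a spectral gap for $\bar H(\lambda,\beta,E,W)$ uniform over $\lambda\in C^1(\Gamma;U(1))$, compact $(E,W)\subset Y_h$, and $\beta\ge\beta_0$, and a twisted Arrhenius/matrix-tree asymptotic with phase control showing the ground state is dominated by $v(h,E,W,\lambda)$ --- are exactly the content of the conjecture, not ingredients you can ``grant''; until they are proved, the transition-function matching and the holonomy computation are hypothetical. You also rely on Conjecture~\ref{lem:projection}, which the paper likewise leaves unproven, so even the identification $H_1(\check{\cal M}_\Gamma;\Bbb Z)\cong H_1(\tilde{\cal M}_\Gamma;\Bbb Z)$ underlying the comparison is conditional. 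A smaller point: asserting that $\tilde\eta$ and $\tilde\xi$ are ``isomorphic'' glosses over the fact that they live over different base spaces ($C^1(\Gamma;U(1))\times\tilde{\cal M}_\Gamma$ versus $C^1(\Gamma;U(1))\times\check{\cal M}_\Gamma$); what you actually need is an isomorphism after pullback along $\operatorname{id}\times\pi$, or an equality of the induced slant homomorphisms, and this is worth stating precisely since the whole point of the weak line bundle formalism is to manage exactly this kind of change of base. In short: plausible programme, faithful to the authors' intent, but the core analytic estimates are missing and the statement remains a conjecture.
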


\section{Appendix: an adiabatic theorem \label{adiabatic_section}}
Here we formulate and prove an adiabatic theorem for periodic driving. Roughly,
it states that for slow enough driving a periodic
solution of the master equation exists and is unique, and furthermore, in the adiabatic 
limit this solution will tend to the Boltzmann distribution taken at the associated normalized
driving protocol.

Let us introduce the {\it evolution operator} $U(t,t_0) = U(t,t_0;H,\tau_D)$ for 
$0 \le t_0 \le t \le 1$,
which is the unique solution to the initial value problem
\[
\frac{d}{dt}U(t,t_0) = \tau_D U(t,t_0)H(\gamma(t)), \qquad U(t_0,t_0) = I\, ,
\]
where $I$ denotes the identity operator. 
We remark that $U(t,t_0)$ is also called the {\it path-ordered exponential}
and is sometimes expressed in the notation 
\[
\hat T\exp(\tau_D\int_{t_0}^{t} dt' H(\gamma(t')))
\]
(cf.\ \cite{Lam}).

Then it is elementary to show that the master equation
\[
\dot {\mathbf p}(t) = \tau_D H(\gamma(t)){\mathbf p}(t)
\]
has formal solution
\[
\mathbf p(t)Ê= U(t,0) \mathbf p(0)\, .
\]

\begin{prop}\label{periodic_solution} Let 
$(\tau_D,\gamma)$ be a periodic driving protocol. Then there
is positive real number $\tau_0$ such that if $\tau_D \ge \tau_0$, then
there is a unique periodic solution $\rho(t)$ to the master equation, i.e.,
$\rho(0) = \rho(1)$.
\end{prop}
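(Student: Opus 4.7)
The plan is to recast the existence of a periodic solution as a fixed-point problem for the monodromy operator $U(1,0)\in\End_{\mathbb{R}}(C_0(\Gamma;\mathbb{R}))$, and then use Brouwer's fixed-point theorem for existence and the Adiabatic Theorem (Corollary~\ref{cor:adiabatic_theorem}) for uniqueness. Since every solution of the master equation has the form $\mathbf{p}(t)=U(t,0)\mathbf{p}(0)$, the periodicity condition $\rho(0)=\rho(1)$ is equivalent to $\rho(0)$ being a fixed point of $U(1,0)$.

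For existence I would exploit the Markov structure: the preliminary remarks record that $\sum_{j} H(\gamma(t))_{ij}=0$ and $H(\gamma(t))_{ij}\ge 0$ for $i\ne j$, so $H(\gamma(t))$ is the generator of a continuous-time Markov chain for every $t$. Standard positivity estimates for linear ODEs with Metzler generators then show that the propagator $U(t,0)$ is stochastic: it has non-negative entries and preserves the normalization hyperplane. Consequently $U(1,0)$ restricts to a continuous self-map of the compact convex probability simplex $\Delta\subset \bar C_0(\Gamma;\mathbb{R})$, and Brouwer supplies a fixed point for every $\tau_D>0$. Setting $\rho(t):=U(t,0)\rho(0)$ then yields a periodic solution with values in $\Delta$, with no largeness assumption on $\tau_D$.

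The hypothesis that $\tau_D$ be large enters only in the uniqueness step. By Remark~\ref{rem:master_operator_remarks} each $H(\gamma(t))$ is self-adjoint in the $\hat\kappa$-inner product with non-positive real spectrum, and connectedness of $\Gamma$ together with the fact that $\rho^{\text{B}}(\gamma(t))$ lies in the null space (see the subsection on the Boltzmann distribution for the population space) shows, via Perron--Frobenius for the irreducible reversible chain, that $0$ is a \emph{simple} eigenvalue of $H(\gamma(t))$. Since $\gamma(C)$ is compact, continuous dependence of the spectrum on $(E,W)$ yields a uniform spectral gap $\delta>0$. Applying the Adiabatic Theorem to this one-parameter family of self-adjoint generators then gives that, for all $\tau_D$ beyond some $\tau_0$, the monodromy $U(1,0)$ has $1$ as a simple eigenvalue, with eigenvector a small perturbation of $\rho^{\text{B}}(\gamma(0))$, and with all remaining eigenvalues of modulus strictly less than $1$. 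Hence the fixed-point subspace of $U(1,0)$ is one-dimensional, and intersecting with the normalization hyperplane $\sum_i p_i=1$ pins down a unique periodic solution.

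The main obstacle is the extraction of spectral information about $U(1,0)$ from the Adiabatic Theorem, together with the uniform spectral-gap hypothesis on the family $H(\gamma(t))$. The latter reduces to Perron--Frobenius for the reversible, irreducible Markov chains on the connected graph $\Gamma$ combined with continuous dependence of the spectrum over the compact set $\gamma(C)$; the former is exactly what the appendix's adiabatic theorem is designed to deliver. Once these two inputs are granted, the proof collapses to the Brouwer argument plus the simplicity-of-eigenvalue bookkeeping sketched above.
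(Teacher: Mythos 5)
Your existence argument via stochasticity of the propagator and Brouwer's fixed-point theorem is correct, and in fact proves more than the proposition asserts (a periodic solution in the probability simplex for \emph{every} $\tau_D>0$). The paper does not take this route: it instead sets $\xi(t)=\rho^{\mathrm B}(\gamma(t))-\mathbf p(t)$, reduces periodicity of $\mathbf p$ to the algebraic condition $(I-U(1,0))\xi(0)=-\int_0^1 U(1,t')\dot\rho^{\mathrm B}\,dt'$ on the invariant subspace $\tilde C_0(\Gamma;\mathbb R)$, and then shows $I-U(1,0)$ is invertible there for $\tau_D$ large by the norm estimate of Lemma~\ref{estimates}, which delivers existence and uniqueness in a single stroke. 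Your decomposition into a Brouwer existence step plus a separate uniqueness step is a genuinely different, and for existence a more elementary and more general, approach.

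The uniqueness step, however, has a real gap: it is circular as written. You invoke ``the Adiabatic Theorem (Corollary~\ref{cor:adiabatic_theorem})'' to deduce that $U(1,0)$ has $1$ as a simple eigenvalue and all other eigenvalues of modulus $<1$. But Corollary~\ref{cor:adiabatic_theorem} is stated in terms of ``the periodic solution $\rho(t)$'' and its proof in the appendix explicitly says ``It is enough to show that $\lim_{\tau_D\to\infty}\xi(t)=0$ where $\xi(t)$ is as in the proof of Proposition~\ref{periodic_solution}'' --- i.e.\ it presupposes the very proposition you are trying to prove. Moreover, even bracketing the logical order, Corollary~\ref{cor:adiabatic_theorem} only asserts convergence of $\rho(t)$ to $\rho^{\mathrm B}(\gamma(t))$; it says nothing about the spectrum of the monodromy $U(1,0)$, so it cannot supply the eigenvalue statement you use. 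What you actually need is precisely Lemma~\ref{estimates}: the uniform spectral gap you correctly derived from Perron--Frobenius and compactness feeds into a Gr\"onwall-type bound $\|U(t,t_0)|_{\tilde C_0}\|<ce^{-\lambda\tau_D(t-t_0)}$, and once $\tau_D$ is large enough that $\|U(1,0)|_{\tilde C_0}\|<1$, any two periodic solutions $\rho_1,\rho_2$ in the simplex have $\rho_1(0)-\rho_2(0)\in\tilde C_0$ fixed by $U(1,0)$ and hence equal to zero. Replace the appeal to the Adiabatic Theorem with this norm estimate and your proof closes cleanly, with the logical ordering intact (Lemma~\ref{estimates} $\Rightarrow$ Proposition~\ref{periodic_solution} $\Rightarrow$ Corollary~\ref{cor:adiabatic_theorem}).
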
 

\begin{proof} We shall use abbreviated notation and write 
$\rho^{\text{\rm B}}(t)$ in place of
$\rho^{\text{\rm B}}(\gamma(t))$.
For any solution to the master equation ${\mathbf p}(t)$,
set
\[
\xi(t) := \mathbf  \rho^{\text{\rm B}}(t) - {\mathbf p}(t)\, .
\]
Then $\xi\:[0,1]\to \tilde C_0(\Gamma;\Bbb R)$ is a family of
reduced population vectors.
Furthermore, $\xi(t)$ is periodic if and only ${\mathbf p}(t)$ is, and
\begin{equation} \label{xi-defn}
{\mathbf p}(t) = \mathbf  \rho^{\text{\rm B}}(t) + \xi(t)\, .
\end{equation}

Inserting Eq.~\eqref{xi-defn}
expression into the master equation and using the fact that the 
Boltzmann distribution lies in the null space of the master operator, 
we obtain the first order linear differential equation in $\xi$,
\begin{equation} \label{diffEQ}
\dot \xi(t) - \tau_D H(t) \xi(t) = -\dot \rho^{\text{\rm B}}(t) \, ,
\end{equation}
where $H(t)$ is shorthand for $H(\gamma(t))$.

Applying $U(1,t)$ to Eq.~\eqref{diffEQ} we get
\[
U(1,t)\dot \xi - U(1,t) \tau_D H \xi = - U(1,t)
\dot \rho^{\text{\rm B}} \, .
\]
Notice that the left side of the last display is just $d/dt ( U(1,t)\xi)$.
Integrating both sides  we obtain
\[
 U(1,t) \xi(t) =  -\int_0^t U(1,t')\dot \rho^{\text{\rm B}} \, dt'  
 + C\, .
\]
Setting $t = 0$ we see that $U(1,0)\xi(0) = C$. Evaluating 
at $t=1$ and using the fact that $U(1,1) = I$ yields
\[
\xi(1) -  U(1,0)\xi(0)  = -\int_0^1  U(1,t')\dot \rho^{\text{\rm B}}\,  dt'\, .
\]
Consequently, $\xi(0) = \xi(1)$ if and only if 

\begin{equation}\label{period_condition}
(I-U(1,0))\xi(0) = -\int_0^1 U(1,t')\dot \rho^{\text{\rm B}} \, dt'\, .
\end{equation}
It is therefore sufficient to show that the operator $I-U(1,0)$
is invertible, when considered as an operator acting on 
the invariant subspace $\tilde C_0(\Gamma;\Bbb R)$, provided
 $\tau_D$ is sufficiently large.

Let $\lambda$ and $c$ be the constants obtained in Lemma \ref{estimates}
below. If we set $\tau_0 := (1/\lambda) \ln(2c)$, then we have $\|U(1,0)\| <1/2$.
It follows that $I-U(1,0)$ is invertible on $\tilde C_0(\Gamma;\Bbb R)$.
\end{proof}

The last part of the proof of Proposition \ref{periodic_solution} rested on an
estimate that appears below. To formulate it we use the norm on $\tilde C_0(\Gamma;\Bbb R)$
given by $\|\xi\| = \sqrt{\langle \xi,\xi\rangle}$ where the inner product is
the one induced by the standard inner product on $C_0(\Gamma;\Bbb R)$. 
If $A$ is an operator on $\tilde C_0(\Gamma;\Bbb R)$ then we define
$\|A\| :=\sup_{\xi \ne 0 } \| A\xi \|\|\xi\|^{-1} = 
\sup_{\xi = 1 } \| A\xi \|.$

\begin{lem}\label{estimates}
For a periodic driving protocol $(\tau_D,\gamma)$, there are
positive constants $\lambda$ and $c$ such that for all $t,t_0\in [0,1]$ we
have
\[
\| U(t,t_0) \| < ce^{-\lambda\tau_D(t-t_0)}\, .
\]
\end{lem}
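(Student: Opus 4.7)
My approach exploits the self-adjointness of the master operator $H(\gamma(t))$ with respect to the inner product $\langle -,-\rangle_{\hat{\kappa}(t)}$ noted in Remark \ref{rem:master_operator_remarks}. Because $\Gamma$ is connected, the kernel of $H(\gamma(t))$ is one-dimensional, spanned by the Boltzmann distribution, which lies outside the invariant subspace $\tilde C_{0}(\Gamma;\mathbb{R})$. Hence the restriction of $H(\gamma(t))$ to $\tilde{C}_0(\Gamma;\mathbb{R})$ has strictly negative spectrum at each $t$. Since the entries of $\hat{g}(t)^{-1}$ and $\hat{\kappa}(t)$ depend continuously on $t$, the eigenvalues of this restriction depend continuously on $t$; compactness of $[0,1]$ then yields a uniform spectral gap $\lambda>0$ such that
\[
\langle \xi, H(\gamma(t))\xi\rangle_{\hat{\kappa}(t)} \;\le\; -\lambda \,\|\xi\|_{\hat{\kappa}(t)}^{2}
\]
for all $t\in[0,1]$ and all $\xi\in \tilde{C}_{0}(\Gamma;\mathbb{R})$.

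Next I would differentiate the weighted norm along a solution $\xi(t) = U(t,t_{0})\xi(t_0)$ of the homogeneous master equation $\dot\xi = \tau_{D} H(\gamma(t))\xi$. A direct calculation gives
\[
\frac{d}{dt}\|\xi(t)\|_{\hat{\kappa}(t)}^{2} \;=\; \xi(t)^{T}\dot{\hat{\kappa}}(t)\,\xi(t) + 2\tau_{D}\,\langle \xi(t), H(\gamma(t))\xi(t)\rangle_{\hat{\kappa}(t)}\, .
\]
The spectral gap bounds the second term by $-2\tau_{D}\lambda\|\xi\|_{\hat{\kappa}}^{2}$. For the first term, continuity of $\dot{\hat{\kappa}}(t)$ on $[0,1]$ together with the uniform equivalence of the $\hat{\kappa}(t)$-norms (as $t$ varies in the compact set $[0,1]$) with the standard norm yields a $\tau_{D}$-independent constant $C>0$ such that $\xi^{T}\dot{\hat{\kappa}}(t)\xi \le C\|\xi\|_{\hat{\kappa}(t)}^{2}$.

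Combining these estimates and applying Gronwall's inequality gives
\[
\|\xi(t)\|_{\hat{\kappa}(t)}^{2} \;\le\; e^{(C - 2\tau_{D}\lambda)(t-t_{0})}\,\|\xi(t_{0})\|_{\hat{\kappa}(t_{0})}^{2}\, .
\]
Since $0\le t-t_{0}\le 1$, the factor $e^{C(t-t_{0})/2}\le e^{C/2}$ is a $\tau_{D}$-independent constant; one final use of norm equivalence translates the bound from $\|\cdot\|_{\hat{\kappa}(t)}$ back to $\|\cdot\|$. The product of these constants provides the required $c$, and the stated decay rate $\lambda$ survives unchanged.

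The main technical obstacle is the time-dependence of the natural inner product $\langle -,-\rangle_{\hat{\kappa}(t)}$ in which $H(\gamma(t))$ is self-adjoint: differentiating the weighted norm produces an extra term $\xi^{T}\dot{\hat{\kappa}}\xi$ which is not manifestly dissipative. The estimate succeeds only because this term is of order $O(1)$ in $\tau_{D}$, while the dissipation from $\tau_{D}\langle \xi, H\xi\rangle_{\hat{\kappa}}$ is of order $O(\tau_{D})$; thus the shape of the bound $ce^{-\lambda\tau_{D}(t-t_{0})}$ absorbs the $\tau_{D}$-independent exponential $e^{C/2}$ into the prefactor without degrading the exponential rate $\lambda$.
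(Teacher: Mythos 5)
Your proof is correct and takes essentially the same route as the paper. The paper factors out the exponential decay explicitly by writing $H_0(t) = H(t) + \lambda I$ and bounding the shifted evolution operator $U_0(t,t_0)$, whereas you absorb the $-2\tau_D\lambda$ dissipation directly into the Gronwall estimate alongside the $O(1)$ contribution from $\dot{\hat{\kappa}}$; these are cosmetic variants of the same argument, relying on self-adjointness of $H(t)$ in the $\hat{\kappa}(t)$-inner product, a uniform spectral gap from compactness, and norm equivalence over $[0,1]$.
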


\begin{proof}
Consider the time-dependent inner product 
$\kappa_{t}=\kappa(\gamma(t))$ in $\tilde{C}_{0}(\Gamma;\mathbb{R})$, 
defined by 
$\kappa_{t}(\xi,\eta) = 
\sum_{j\in \Gamma_{0}}e^{\beta E_{j}(t)}\xi_{j}\eta_{j}$ (the is just $\langle {\xi},{\eta}\rangle_{\hat \kappa_t}$ in the notation of Remark \ref{rem:master_operator_remarks}).
Then for all $t$ the operator 
$H(t) = H(\gamma(t))$, when considered as acting on $\tilde C_0(\Gamma;\Bbb R)$, 
is self-adjoint 
with respect to the inner product $\kappa_{t}$ and its spectrum is strictly negative.

Set $\lambda := -\sup_{t\in [0,1]}\sigma(H(t))$, where $\sigma(T)$ denotes the spectrum of a linear operator $T$. Then $\lambda> 0$, and the spectrum of the operator ${H}_{0}(t)={H}(t)+\lambda I$ is non-negative for all $t$. Let $U_{0}(t,t_{0})$ be the corresponding evolution operator. Then $U(t,t_{0})= e^{-\lambda\tau_{D}(t-t_{0})}U_{0}(t,t_{0})$. Hence, $\|{U}(t,t_{0})\|= e^{-\lambda\tau_{D}(t-t_{0})}\|{U}_{0}(t,t_{0})\|$. 
So all we need to prove is that $\|{U}_{0}(t,t_{0})\|$ is uniformly bounded.

Let $\eta(t)$ be the solution of the master equation $\dot{\eta}(t)= 
\tau_{D}H_{0}(t)\eta(t)$ with the initial condition $\eta(t_{0})=\xi$. We then have
\begin{eqnarray}
\label{ME-for-scalar-prod} \frac{d}{dt}\kappa_{t}(\eta(t),\eta(t))&=& \dot{\kappa}_{t}(\eta(t),\eta(t))+ 2\tau_{D}\kappa_{t}({H}_{0}(t)\eta(t),\eta(t)) \nonumber \\  &\le& \dot{\kappa}_{t}(\eta(t),\eta(t)),
\end{eqnarray}
since for all $t$ we have $\kappa_{t}({H}_{0}(t)\eta(t),\eta(t))\le 0$. 

Since $\eta(t)\ne 0$ provided $\eta(t_{0})= \xi\ne 0$,  we infer that
 $\kappa_{t}(\eta(t),\eta(t))> 0$ for all $t$. By compactness, $\|\kappa_{t}\|$ is bounded  below, and since $\|\dot{\kappa}_{t}\|$ is bounded above, we infer that there is a constant $A>0$, 
 so that $\dot{\kappa}_{t}(\eta(t),\eta(t))(\kappa_{t}(\eta(t),\eta(t)))^{-1}< A$. Combined with Eq.~(\ref{ME-for-scalar-prod}) this implies $(d/dt)\ln \kappa_{t}(\eta(t),\eta(t))< A$, and further implies the uniform bound
\begin{eqnarray}
\label{scalar-prod-bound} \frac{\kappa_{t}({U}_{0}(t,t_{0})\xi,{U}_{0}(t,t_{0})\xi)} {\kappa_{t_{0}}(\xi,\xi)}= \frac{\kappa_{t}(\eta(t),\eta(t))}{\kappa_{t_{0}}(\eta(t_{0}),\eta(t_{0}))}< e^{A(t-t_{0})}.
\end{eqnarray}
The uniform bound provided by Eq.~(\ref{scalar-prod-bound}) implies the uniform bound
\begin{eqnarray}
\label{scalar-standard-prod-bound} \frac{\langle {U}_{0}(t,t_{0})\xi,{U}_{0}(t,t_{0})\xi\rangle} {\langle\xi,\xi\rangle}< B^{2},
\end{eqnarray}
with respect to the standard inner product for some $B>0$, which immediately implies the uniform bound $\|{U}_{0}(t,t_{0})\|< B$. \qed
\end{proof}

\begin{cor}[Adiabatic Theorem, cf.\ {\cite[V.3]{vanKampen}}]
\label{cor:adiabatic_theorem} Let $(\tau_D,\gamma)$ be a periodic driving protocol, with $\tau_D$ sufficiently large. 
If $\mathbf \rho(t)$ denotes
the periodic solution
of the master equation, then 
\[
\mathbf  \rho^{\text{\rm B}}(\gamma(t)) = \lim_{\tau_D\to \infty} 
\mathbf \rho(t)\, .
\]
\end{cor}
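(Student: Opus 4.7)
The plan is to extract the adiabatic limit directly from the two tools already developed in the course of proving Proposition~\ref{periodic_solution}: the exponential decay estimate of Lemma~\ref{estimates}, and the closed form expression for the initial value of the deviation $\xi(t) := \rho^{\text{B}}(\gamma(t)) - \rho(t)$ provided by Eq.~\eqref{period_condition}. Since $\rho(t) = \rho^{\text{B}}(\gamma(t)) - \xi(t)$, it suffices to show that $\|\xi(t)\| \to 0$ uniformly in $t\in[0,1]$ as $\tau_D \to \infty$.

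First I would bound $\|\xi(0)\|$. Let $M := \sup_{t\in[0,1]}\|\dot\rho^{\text{B}}(\gamma(t))\|$, which is finite by compactness of $[0,1]$ and smoothness of the Boltzmann map together with $\gamma$. Lemma~\ref{estimates} gives $\|U(1,t')\|\le c e^{-\lambda\tau_D(1-t')}$, so
\[
\Bigl\|\int_0^1 U(1,t')\,\dot\rho^{\text{B}}(\gamma(t'))\,dt'\Bigr\| \;\le\; cM\int_0^1 e^{-\lambda\tau_D(1-t')}\,dt' \;\le\; \frac{cM}{\lambda\tau_D}.
\]
For $\tau_D > \tau_0 = (1/\lambda)\ln(2c)$ the same lemma yields $\|U(1,0)\| < 1/2$, so by the Neumann series $\|(I-U(1,0))^{-1}\| \le 2$ on $\tilde C_0(\Gamma;\Bbb R)$. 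Combining this with Eq.~\eqref{period_condition} gives $\|\xi(0)\| \le 2cM/(\lambda\tau_D) \to 0$.

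Next I would propagate the estimate to arbitrary $t\in[0,1]$ by Duhamel's formula applied to the linear inhomogeneous ODE \eqref{diffEQ}:
\[
\xi(t) \;=\; U(t,0)\,\xi(0) \;-\; \int_0^t U(t,t')\,\dot\rho^{\text{B}}(\gamma(t'))\,dt'.
\]
Applying Lemma~\ref{estimates} to each factor and using $e^{-\lambda\tau_D t} \le 1$,
\[
\|\xi(t)\| \;\le\; c\,\|\xi(0)\| \;+\; \frac{cM}{\lambda\tau_D},
\]
uniformly in $t$. Both terms tend to $0$ as $\tau_D \to \infty$, so $\xi(t) \to 0$ uniformly, and therefore $\rho(t) \to \rho^{\text{B}}(\gamma(t))$ uniformly in $t$.

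Since both ingredients have already been established, there is no serious obstacle; the only delicate point is making sure that one is permitted to apply the spectral estimate of Lemma~\ref{estimates} to $\xi$ and to $\dot\rho^{\text{B}}$. This is legitimate precisely because the Boltzmann distribution is a normalized population vector, so its time derivative $\dot\rho^{\text{B}}$ lies in the reduced subspace $\tilde C_0(\Gamma;\Bbb R)$ (having zero component sum), and the master operator preserves this subspace; thus the entire argument takes place in the invariant subspace on which Lemma~\ref{estimates} furnishes the required exponential decay.
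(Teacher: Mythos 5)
Your proof is correct and follows the same strategy as the paper: show $\xi(t)\to 0$ by first bounding $\|\xi(0)\|$ via Eq.~\eqref{period_condition} and Lemma~\ref{estimates}, then extending to all $t$. The only difference is cosmetic: where the paper sketches re-running the periodicity argument ``with $t_0$ in place of $0$'' to bound $\|\xi(t_0)\|$ directly, you propagate the bound on $\|\xi(0)\|$ forward via Duhamel's formula $\xi(t) = U(t,0)\xi(0) - \int_0^t U(t,t')\dot\rho^{\text{B}}(\gamma(t'))\,dt'$; both routes reduce to the same estimate from Lemma~\ref{estimates}, and your version makes the omitted details fully explicit.
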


\begin{proof} It is enough to show
that  $\lim_{\tau_D\to \infty} \xi(t) =0$
where $\xi(t)$ is as in the proof of Proposition \ref{periodic_solution}.
We first show that $\lim_{\tau_D\to \infty} \xi(0) =0$.

To see this, start with the estimate
\begin{align*}
\| \int_0^t U(t,t')\dot \rho^{\text{\rm B}}(t') \, dt' \|
& \le 
 \int_0^t \| U(t,t')\| \| \dot \rho^{\text{\rm B}}(t')\| \, dt' \\
 & \le c\sup_{t' \in [0,1]} \| \dot \rho^{\text{\rm B}}(t')\|\int_0^1 
 e^{-\lambda\tau_D(1-t)}\, dt \\
 & <  \frac{\alpha c}{\lambda \tau_D}\, ,
 \end{align*}
 where $\alpha = \sup_{t' \in [0,1]} \|\dot \rho^{\text{\rm B}}(t')\|$.
 Recalling that $\|U(1,0)\| < 1/2$, we have $\|(I - U(1,0))^{-1} \| < 2$. 
 Consequently, 
 \begin{align*}
 \| \xi(0) \|  & =  \|(I-U(1,0))^{-1}\int_0^1 U(1,t')\dot \rho^{\text{\rm B}} \, dt' \| \\
               & \le \|(I-U(1,0))^{-1}\| \|\int_0^1 U(1,t')\dot \rho^{\text{\rm B}} \, dt' \| 
               \\
               & <  \frac{2\alpha c}{\lambda \tau_D}
   \end{align*}
Therefore, $\lim_{\tau_D\to \infty}\xi(0) = 0$.

The proof that $\lim_{\tau_D\to \infty} \xi(t_0) =0$ for any $t_0 \in [0,1]$ is similar, 
using a suitable modification of the above estimate with $t_0$ in place of $0$ and
Lemma \ref{estimates}) to give a similar bound for  $\| \xi(t_0) \|$ (we omit the details).
\end{proof}

\end{document}